\newtheorem{algorithm}[theorem]{Algorithm}
\newtheorem{remark}[theorem]{Remark}
\def \< {\langle} % Skalarprodukt links
\def \> {\rangle} % Skalarprodukt rechts
\def\be{\begin{equation}}
\def\ee{\end{equation}}
\def\bea{\begin{eqnarray}}
\def\eea{\end{eqnarray}}
\def\bean{\begin{eqnarray*}}
\def\eean{\end{eqnarray*}}
\def\NN{\mathbb{N}}
\def\RR{\mathbb{R}}
\def\ZZ{\mathbb{Z}}
\def\BB{\mathbb{B}}
\def\tn{n}
\def\tE{\tilde E}
\def\spann{\operatorname{span}}
\def\mG{\mathcal{G}}
\def\mT{\mathcal{T}}
\newtheorem{example}[theorem]{Example}
\title{An iterative method to reconstruct the refractive index of a medium from time-of-flight measurements}
\author{Udo Schr\"oder\thanks{Department of Mathematics, Saarland University, PO Box 15 11 50, 66041 Saarbr\"ucken, Germany ({\tt schreoder@math.uni-sb.de}).}
        \and Thomas Schuster\thanks{Department of Mathematics, Saarland University, PO Box 15 11 50, 66041 Saarbr\"ucken, Germany ({\tt thomas.schuster@num.uni-sb.de}).}}
\begin{document}

\maketitle

\begin{abstract}
The article deals with a classical inverse problem: the computation of the refractive index of a medium from ultrasound time-of-flight (TOF) measurements.
This problem is very popular in seismics but also for tomographic problems in inhomogeneous media. For example ultrasound vector field tomography needs a priori knowledge of the sound
speed. According to Fermat's principle ultrasound signals travel along geodesic curves of a Riemannian metric which is associated with the refractive index.
The inverse problem thus consists of determining the index of refraction from integrals along geodesics curves associated with the integrand leading to a nonlinear
problem. In this article we describe a numerical solver for this problem scheme based on an iterative minimization method for an appropriate Tikhonov functional.
The outcome of the method is a stable approximation of the sought index of refraction as well as a corresponding set of geodesic curves.
We prove some analytical convergence results for this method and demonstrate its performance by means of several numerical experiments. Another novelty in this article
is the explicit representation of the backprojection operator for the ray transform in Riemannian geometry and its numerical realization relying on a corresponding phase function that is determined 
by the metric. This gives a natural extension of the conventional backprojection from 2D computerized tomography to inhomogeneous geometries.
\end{abstract}

\begin{keywords} 
refractive index, ray transform, Riemannian metric, Fermat's principle, geodesic curve, backprojection operator, Tikhonov functional
\end{keywords}
\begin{AMS}
45G10, 53A35, 58C35, 65R20, 65R32
\end{AMS}

\pagestyle{myheadings}
\thispagestyle{plain}
\markboth{U. Schr\"oder and T. Schuster}{An iterative method to reconstruct the refractive index from TOF measurements}

%%%%%%%%%%%%%%%% EINLEITUNG %%%%%%%%%%%%%%%%%%%%%%%%%%%%%

\section{Introduction}

In this article we consider the inverse problem of computing the refractive index of a medium from ultrasound time-of-flight (TOF) measurements.
On the one side this task is a tomographic problem of its own and often called the \emph{inverse kinematic problem} which has important
applications, e.g. in seismics. On the other side the knowledge of the sound speed, resp. refractive index, of an object $M\subset\RR^2$ under consideration
is essential for inverse problems in inhomogeneous media such as photoacoustic or ultrasound vector tomography. The idea is very simple: an ultrasound
signal is emitted at a transmitter $A$ and its travel time is acquired at a detector $B$, see Figure \ref{motivation}. Of course the TOF
depends of the sound speed $c(x)$ in the medium. The \emph{refractive index} $\tn (x) = c_0/c(x)$, where $c_0$ denotes the constant sound speed of a 
reference medium like water or air outside the object, causes refractions of the ultrasound beam.
Assuming a constant $\tn$ (e.g. by setting $\tn=1$) in applications such as photoacoustic or vector tomography hence might cause severe artifacts.
We emphasized this in Figure \ref{motivation}. There, the blue triangle would be detected at the wrong place if we assume that the ultrasound signal
travels along a straight line and that there is no refraction at all.\\
\begin{figure}[H]
\centering
% \includegraphics[width=.50\textwidth]{./Grafiken/1_DoppelteAorten.png}
% \vspace{5mm}
 \includegraphics[width=.40\textwidth]{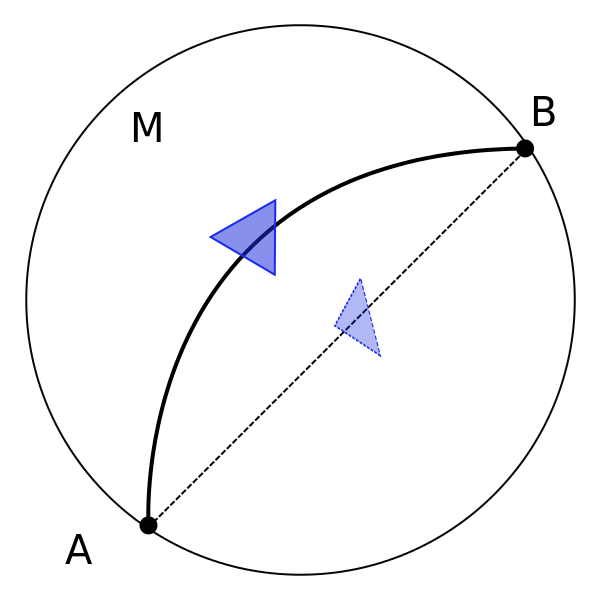}
%\vspace{3in}
\caption{TOF tomography measures the travel time a ultrasound signal needs to propagate from a source point $A$ to a detector $B$. 
If $\tn\not=1$, then the signal no longer travels along straight lines and reconstructions which neglect refraction show artifacts. 
E.g. the blue triangle would be detected at the wrong (dotted) place 
if we assumed straight lines as signal paths.} 
\label{motivation} 
\end{figure}
\noindent Let us briefly illuminate the situation in vector tomography. Norton \cite{n}
derived as mathematical model to compute a solenoidal vector field $v$ from TOF measurements the Doppler transform along straight lines
\[   D v (\ell) = \int_\ell \frac{\mathrm{d}l(x)}{c(x) + \langle v(x),\theta \rangle},   \]
where $\theta$ denotes the vector of direction of the line $\ell$ which connects the points $A$ and $B$. Hence, to solve the inverse problem of computing $v$ from data $D v$ it is necessary
to know the sound speed $c(x)$. But \emph{Fermat's pronciple} says that the propagation paths are geodesic curves of the Riemannian metric
\be\label{n-metric}  \mathrm{d}s^2 = \tn^2 (x) \, |\mathrm{d}x|^2   \ee
leading to an improved model
\[   D v (\gamma) = \int_\gamma \frac{\mathrm{d}l(x)}{c(x) + \langle v(x),\dot{\gamma} (x) \rangle},   \]
where $\gamma$ is a geodesic curve associated with the metric (\ref{n-metric}) and connecting the points $A$ and $B$. The problem now arises that
the integrand determines the integration curve $\gamma$ turning the inverse problem into a nonlinear one. Also in photoacoustic tomography a variable
sound speed leads to quite other analysis and numerics, see e.g. \cite{AGRANOVSKY;KUCHMENT:07,QIAN;ET;AL:11}. Hence, following Fermat's principle, then instead of the Euclidean space $(M,g_0)$
with the Euclidean metric tensor $g_0=(\delta_{ij})$, we have to consider the Riemannian manifold $(M,g)$ with metric tensor $g_{ij} (x) = \tn^2 (x) \delta_{ij}$.\\
This is motivation to investigate the following inverse problem: Given TOF measurements $u^{meas}$ for transmitter / detector pairs $(A,B)$, compute the
index of refraction $\tn (x)$ satisfying 
\be\label{IP-refractive}   R (\tn) = u^{meas} ,  \ee
where the ray transform $R$ is given as
\be\label{ray-transform}   R (\tn)(x,\xi) := \int_{\gamma^\tn_{x,\xi}} \mathrm{d} s = \int_{\tau}^0 \tn \big( \gamma^\tn_{x,\xi}(t) \big) |\dot{\gamma}^\tn_{x,\xi}(t)|\, \mathrm{d} t . 
\ee
Here, $\gamma^\tn_{x,\xi}$ denotes a geodesic curve of the metric (\ref{n-metric}) with $\gamma_{x,\xi} (0)=x\in\partial M$ and $\dot{\gamma}_{x,\xi}^\tn (0)=\xi$ 
for $\xi\in T_x M$ a tangent vector at $x$.\\
This inverse problem and its research has a long lasting history. We summarize some important references in this context.
Herglotz \cite{herglotz} was among the first researchers who has taken inhomogeneities into account. He investigated the earth's inner structure by considering travel times of seismic waves.
Mukhumetov \cite{muku1975} proved that the determination of simple metrics in two dimensions from travel times is possible. The extension to the three-dimensional case
was achieved independently by Romanov \cite{roma1978} and Mukhumetov \cite{muku1981}. General results on the inverse kinematic problem have been proven by Stefanov and Uhlmann
in \cite{STEFANOV;UHLMANN:09}, Chung et al. \cite{CHUNG;ET;AL:07} and Sharafutdinov \cite{s2}; a further uniqueness and stability result can be found in \cite{STEFANOV;UHLMANN;VASY:14}. The 2D problem for anisotropic metrics was solved by Pestov and Uhlmann \cite{pestov2005}; the approach contained therein is constructive.
The question of a unique solution, the so called \emph{boundary rigidity problem},
is not entirely solved by now. First results in 2D were achieved by Michel \cite{michel1981}, Croke \cite{croke1990} and Otal \cite{otal1990}. Pestov and Uhlmann \cite{stefa1998}
showed uniqueness for simple, two-dimensional Riemannian manifolds. A microlocal treatment can be found in Stefanov and Uhlmann \cite{STEFANOV;UHLMANN:04}. Local and semi-global results were presented by 
Croke et al. \cite{croke2000}, Stefanov and Uhlmann \cite{stefa1998}, Gromov et al. \cite{gromo1983}, Croke \cite{croke1991}, \cite{croke2004} and 
Lassas et al. \cite{lassas2003}, partly for special metrics only. For more references concerning analytical results for the inverse kinematic problem and the boundary rigidity problem
we refer to the book of Sharafutdinov \cite{s} and the references therein. Based on the Pestov-Uhlmann reconstruction formulas from \cite{pestov2005} Monard \cite{MONARD:14} derived
a numerical solver for the linear geodesic ray transform. Another numerical solution scheme which relies on Beylkin's theory \cite{Beylkin:1984} is presented in \cite{ps}. The influence of refraction
to reconstruction results in 2D emission tomography have been studied in \cite{DEREVTSOV;ET;AL:00}, a numerical solver for the geodesic ray transform based on B-splines is presented in
\cite{svetov}. A further numerical solver for the inversion of $R$ (\ref{ray-transform}) is found in Klibanov and Romanov \cite{KLIBANOV;ROMANOV:15}; here the linearization is done by replacing the geodesic curves by straight lines.\\
The novelty of our article is twofold: On the one hand we consider the nonlinear problem \eqref{IP-refractive} and our numerical solver linearizes $R$ in each iteration step using the old iterate $\tn_k$ to compute the geodesic curves. On the other hand we use an explicit representation of the geodesic backprojection operator and show how to implement it. To this end the construction of a so called
\emph{geodesic projection} was necessary.\\
\emph{Outline.} In Section 2 we provide essential results from Riemannian geometry which are necessary for our further considerations as well as the mathematical model for the inverse problem.
Additionally we collect some mathematical properties of the nonlinear forward operator $R$ (\ref{ray-transform}). The iterative solver which we develop in this article demands for evaluation of
integrals along geodesic curves. The computation of these curves is done using the method of characteristics which is outlined in Section 2.5. The regularizing solution scheme is subject of Section 3. 
We formulate an appropriate Tikhonov functional and linearize $R$ for its minimization. The derivative of the so arising functional contains the backprojection operator of the geodesic ray tranform. 
We give an explicit expression of this operator using the concept of \emph{phase functions} and \emph{geodesic projection} yielding in that sense an analogon to the conventional 2D backprojektion operator in Euclidean geometry (Section 3.1). The iterative minimization scheme and its implementation is described in Section 3.2. Section 4 finally contains numerical evaluations of the method for several
refractive indices $\tn$ with exact and noisy data. Section 5 concludes the article.\\

%%%%%%%%%%%%%%%%%%% Section Mathematical Setting %%%%%%%%%%%%%%%%%%%%%%%%%%%%%%%%%%%

\section{Mathematical setup and modeling}

\subsection{Basics from Riemannian geometry}

We collect some fundamental results from differential geometry which are useful for our later considerations. Throughout the article
we assume $M \subset \RR^2$ to be a compact and convex domain
which is seen as a submanifold of $\RR^2$.\\
\begin{definition}[Riemannian metric, metric tensor] \label{def:2_RiemMetrik}
On $M \subset \RR^2$ we define a \emph{Riemannian metric} as a differentiable mapping $M \ni x \mapsto g_x=g(x)$, such that 
 \[
 g_x:T_xM \times T_xM \to \RR
 \]
is a positive definite, symmetric bilinear form on the tangent space $T_xM$ in $x$. We have for $\xi, \eta \in T_xM$
\begin{itemize}
 \item[1.] $g_x(\xi,\eta) = g_x(\eta,\xi)$,
 \item[2.] $g_x(\xi,\xi) > 0$, if $\xi \neq 0$ and
 \item[3.] for diffentiable vector fields $X,Y:M \to TM$ is $x \mapsto g_x(X_x,Y_x)$ a differentiable mapping.
\end{itemize}
Here $TM=\{(x,\xi) : x\in M,\; \xi\in T_xM \}$ is the tangent bundle on $M$.\\
A representation of the metric $g_x$ with respect to local coordinates is given by
\be
 g_x = \sum_{i,j=1}^n g_{ij}(x) \cdot dx^i|_x \otimes dx^j|_x. \label{gl:2_DefiMetrikSum}
\ee
The third condition is then equivalent to the requirement that the coefficient functions $g_{ij}(x)$ are differentiable independently of the chart.
The local coordinates $(g_{ij}) = (g_{ij})_{i,j=1}^2$ are called \emph{metric tensor}.\\
If it is convenient, then we use the Einstein notation. That means, that we sum up over doubled indices. The representation \eqref{gl:2_DefiMetrikSum} then becomes
\[
 g_x = g_{ij}(x) \cdot dx^i|_x \otimes dx^j|_x.
\]
The tuple $(M,g)$ is called \emph{Riemannian manifold}.\\[1ex]
\end{definition} 
\begin{example}
Let $g_0$ be the metric tensor of Euclidean geometry (${g_0}_{ij} = \delta_{ij}$). Then $(\RR^n, g_0)$ is a Riemannian manifold and is called \emph{Euclidean space}.\\[1ex]
\end{example}
We introduce a specific metric tensor which plays a crucial role when studying ultrasound wave propagation in an inhomogeneous medium. 
Let $c(x)$ be the speed of sound at $x\in M$ and $c_0$ be the sound speed of a reference medium (e.g. air or water). Then, $\tn (x) = c_0/c(x)$ denotes
the \emph{index of refraction}. Especially we assume $c=c_0$ in $\RR^2\backslash M$.\\
\begin{lemma}\label{lem:2_Messgeometrie}
For $x\in M$ let
\be\label{metric-n}
 g_x^{\tn} = g_{ij}^{\tn}(x)\, dx^i \otimes dx^j
\ee
with metric tensor
\[
 g_{ij}^{\tn}(x) := \tn ^2(x)\delta_{ij},
\]
where the index of refraction $\tn = \frac{c_0}{c}: M \to \RR^+$ is supposed to be positive and differentiable. Then $(M,g^{\tn})$ is a Riemannian manifold. The element of length is then given as
\[  \mathrm{d} s^2 = \tn^2 (x) |\mathrm{d} x|^2\,.   \]
\end{lemma}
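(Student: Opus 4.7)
The plan is a straightforward verification that $g^{\tn}$ satisfies the three defining properties of a Riemannian metric in Definition \ref{def:2_RiemMetrik}, followed by a direct computation of the line element. Because $g^{\tn}$ is a conformal rescaling of the Euclidean metric by the positive factor $\tn^2$, every property can be reduced to the corresponding property of the Euclidean inner product times a scalar factor.

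First, I would write out the pointwise form explicitly: for $\xi,\eta \in T_xM$,
\[
 g_x^{\tn}(\xi,\eta) = \sum_{i,j=1}^{2} \tn^2(x)\,\delta_{ij}\,\xi^i\eta^j = \tn^2(x)\,\langle \xi,\eta\rangle,
\]
where $\langle\cdot,\cdot\rangle$ denotes the standard Euclidean inner product on $T_xM \cong \RR^2$. Bilinearity of $g_x^{\tn}$ is inherited from the bilinearity of $\langle\cdot,\cdot\rangle$, and property 1 (symmetry) follows because the Euclidean inner product is symmetric and $\tn^2(x)$ is a scalar factor independent of the arguments.

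Next I would verify property 2 (positive definiteness). For $\xi \neq 0$ we have $\langle \xi,\xi\rangle = |\xi|^2 > 0$, and by assumption $\tn(x) = c_0/c(x) > 0$, hence $\tn^2(x) > 0$. Therefore
\[
 g_x^{\tn}(\xi,\xi) = \tn^2(x)\,|\xi|^2 > 0,
\]
which is the required positivity. For property 3 (differentiability), given differentiable vector fields $X,Y : M \to TM$, the map $x \mapsto g_x^{\tn}(X_x,Y_x) = \tn^2(x)\langle X_x,Y_x\rangle$ is a product of differentiable functions: $\tn^2$ is differentiable by hypothesis on $\tn$, and $x \mapsto \langle X_x,Y_x\rangle$ is differentiable because $X,Y$ are. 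Equivalently, the coefficients $g^{\tn}_{ij}(x) = \tn^2(x)\delta_{ij}$ are differentiable, which is chart-independent in this setting since $M \subset \RR^2$ is covered by a single global chart.

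Finally, the formula for the line element is obtained by inserting the coefficients into the standard expression $\mathrm{d}s^2 = g_{ij}^{\tn}(x)\,\mathrm{d}x^i\,\mathrm{d}x^j$:
\[
 \mathrm{d}s^2 = \sum_{i,j=1}^{2} \tn^2(x)\delta_{ij}\,\mathrm{d}x^i\,\mathrm{d}x^j = \tn^2(x)\sum_{i=1}^{2} (\mathrm{d}x^i)^2 = \tn^2(x)\,|\mathrm{d}x|^2.
\]
There is no real obstacle in this proof; the only subtlety worth flagging is the strict positivity of $\tn$, which is precisely what guarantees that the conformally rescaled bilinear form remains positive definite and hence a genuine Riemannian metric rather than merely a degenerate symmetric tensor.
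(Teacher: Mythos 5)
Your proposal is correct and follows essentially the same route as the paper: the paper also verifies symmetry, differentiability, and positive definiteness of $g^{\tn}_{ij}=\tn^2\delta_{ij}$ directly against Definition \ref{def:2_RiemMetrik}, using the identity as a global chart on $M\subset\RR^2$; you merely spell out the conformal-rescaling computation and the line element in more detail.
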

\begin{proof}
The submanifold $M \subset \RR^2$ can be canonically embedded into $\RR^2$ such that we can choose the identity as chart which is differentiable. The metric tensor $g^{\tn}$ satisfies
the requirements of Definition \ref{def:2_RiemMetrik}, 
since it is symmetric ($g_{ij}^{\tn} = g_{ji}^{\tn}$), differentiable (the component functions $\tn$ are differentiable) and positive definite ($\tn > 0$).
\end{proof}
\\[2mm]
For simplicity we set $c_0 = 1$ for the rest of the article.

\subsection{Geodesic curves}

Our aim is to model the propagation of ultrasound waves in a medium with variable sound speed by geodesic curves associated with the metric tensor (\ref{metric-n}).
This is due Fermat's principle (see Section 2.3). We summarize the basics of geodesics. For details we refer to standard textbooks such as \cite{LOVETT:10}. 
\newline
\begin{definition}[Geodesic curve]
Let $(M,g)$ be a Riemannian manifold, $I=[\tau_0,\tau_1]$. The curve $\gamma: I \to M$ is called \emph{geodesic curve} or \emph{geodesic}, if it satisfies the \emph{geodesic equation}
 \be
  \ddot \gamma^i + \Gamma_{jk}^i(\gamma) \dot \gamma^j \dot \gamma^k = 0. \label{gl:2_GeodDGL}
 \ee
The \emph{Christoffel symbols} $\Gamma_{jk}^i$ for $x \in M$ are given as
\be
 \Gamma_{jk}^i(x) := \frac{1}{2}g^{ip}(x)\left( \frac{\partial g_{jp}}{\partial x^k}(x) + \frac{\partial g_{kp}}{\partial x^j}(x) - \frac{\partial g_{jk}}{\partial x^p}(x) \right), 
\label{gl:2_Christoffel}
\ee
where $g^{ip}$ denote the coefficients of the inverse of the metric tensor $(g_{ip})$.
\newline
We define the \emph{length} $T(\gamma)$ of $\gamma$ by
\bean
 T(\gamma) &=& \int_{\gamma} \, ds \\
 &=& \int_{\tau_0}^{\tau_1} \sqrt{g_{\gamma(t)}\left( \dot \gamma(t),\dot \gamma(t) \right)} \, dt.
\eean
\end{definition}
\\[2mm]
Equipped with initial conditions $\gamma(0) = x\in M$ as starting point and $\dot \gamma(0) = \xi \in T_xM$ as an initial direction, it follows by the Picard-Lindel\"of theorem 
that \eqref{gl:2_GeodDGL} has a unique 
solution which is then denoted by $\gamma_{x,\xi}$. If we want to point out on which metric the geodesic depends, then we write $\gamma_{x,\xi} = \gamma^g_{x,\xi}$. For $\gamma_{x,\xi}^{\tn}:=
\gamma_{x,\xi}^{g^{\tn}}$ the length $T(\gamma_{x,\xi}^{\tn})$ coincides with the TOF of an ultrasound signal emitted from $x$ in direction $\xi$.
\newline
\begin{definition}[Distance]\label{D-distance}
 Let $(M,g)$ be a Riemannian manifold and $x,y \in M$. We define the \emph{distance} between $x$ and $y$ by
 \[
  d(x,y) := \inf_{\gamma \in C^\infty(I,M) \atop \gamma(0) = x, \gamma(\tau) = y} T(\gamma).
 \]
Any curve $\tilde \gamma$ in $M$ which attains the infimum is called \emph{shortest curve} from $x$ to $y$.
\end{definition}
\\[2mm]
In the Euclidean space $(\RR^2,g_0)$ all geodesic curves are straight lines and vice versa. Particularly all geodesics are at the same time shortest paths between two points.
On the sphere $S^2 := \left\{ x\in\RR^3 : \|x\| = 1 \right\}$ equipped with the Euclidean metric induced from $\RR^3$, all great circles are geodesic curves. 
But for two separate points on the sphere there are 
two geodesic curves which connect them and in general only one of them is a shortes curve between these points.
\newline
\begin{lemma}\label{kor:2_StetigeGeodaeten}
Let $\tn \in C^2(M)$. Then all geodesic curves and their first derivatives depend continuously on the initial values and the refractive index $\tn$.\\[1ex]
\end{lemma}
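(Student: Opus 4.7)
The plan is to reduce the statement to the classical theorem on continuous dependence of ODE solutions on initial data and parameters. First, I rewrite the second-order geodesic equation \eqref{gl:2_GeodDGL} as a first-order autonomous system on the tangent bundle, introducing $v = \dot\gamma$ to obtain
\begin{equation*}
  \dot\gamma^i = v^i, \qquad \dot v^i = -\,\Gamma^i_{jk}(\gamma)\,v^j v^k,
\end{equation*}
with initial values $\gamma(0)=x$, $v(0)=\xi$. The right-hand side $F^\tn(\gamma,v)$ depends on $\tn$ only through the Christoffel symbols $\Gamma^i_{jk}$, which, by \eqref{gl:2_Christoffel} and the explicit form $g_{ij}^\tn = \tn^2\delta_{ij}$, $g^{ij}_\tn = \tn^{-2}\delta^{ij}$, are algebraic expressions in $\tn$ and its first partial derivatives.

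Next I verify the hypotheses of the standard continuous-dependence theorem. Since $\tn\in C^2(M)$ and $\tn>0$ on the compact set $M$, the coefficients $\Gamma^i_{jk}$ are of class $C^1(M)$, in particular locally Lipschitz in $\gamma$; the quadratic factor $v^j v^k$ is smooth in $v$. Hence $F^\tn$ is locally Lipschitz in $(\gamma,v)$, which by Picard--Lindel\"of yields existence, uniqueness, and $C^1$-dependence of $(\gamma_{x,\xi}^\tn,\dot\gamma_{x,\xi}^\tn)$ on the initial data $(x,\xi)$ on the maximal interval of existence. Viewing $\tn$ as a parameter in the $C^1(M)$-topology, a small perturbation $\tn\mapsto\tilde\tn$ produces a small uniform perturbation of $\Gamma^i_{jk}$ (since these involve at most first derivatives of $\tn$); the continuous-dependence-on-parameters theorem then delivers uniform convergence of $(\gamma_{x,\xi}^{\tilde\tn},\dot\gamma_{x,\xi}^{\tilde\tn})$ to $(\gamma_{x,\xi}^\tn,\dot\gamma_{x,\xi}^\tn)$ on any compact subinterval on which the limit solution exists.

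The one point that requires care is the uniformity of the Lipschitz constants and bounds across a neighborhood of the reference data $(x,\xi,\tn)$. Since $M$ is compact, $\tn$ is bounded away from $0$, and the $C^1$-norm of $\tn$ is finite, so both $\Gamma^i_{jk}$ and $\partial_\gamma \Gamma^i_{jk}$ are uniformly bounded on $M$; by compactness the maximal interval of existence of $\gamma^\tn_{x,\xi}$ before leaving $M$ is stable under small perturbations, and the solutions remain in a common compact set in $TM$. This is the only real obstacle, and it is handled by a standard Gronwall-type estimate applied to the difference of the two trajectories, bounding it in terms of $\|\tilde\tn-\tn\|_{C^1(M)}$ and $|(\tilde x,\tilde\xi)-(x,\xi)|$. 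The conclusion then follows immediately for both $\gamma$ and $\dot\gamma$, which is exactly the assertion of the lemma.
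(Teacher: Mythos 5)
Your proposal is correct and follows essentially the same route as the paper: both reduce the geodesic equation to a first-order system whose right-hand side is built from the Christoffel symbols of $g^{\tn}$, verify a Lipschitz condition using $\tn\in C^2(M)$, $\tn>0$ and compactness of $M$, and then invoke the standard ODE theorems on continuous dependence on initial values and parameters (the paper cites \cite[Ch.~III]{WALTER:98} for this last step, where you spell out the Gronwall argument). The only difference is one of explicitness, not of substance.
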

\begin{proof}
Setting
\bean
z_1(t) &:=& \gamma_1(t) \\
z_2(t) &:=& \gamma_2(t) \\
z_3(t) &:=& \dot z_1(t) = \dot \gamma_1(t) \\
z_4(t) &:=& \dot z_2(t) = \dot \gamma_2(t).
\eean
with initial values $\gamma(0) = x \in M$ und $\dot \gamma(0) = \xi \in T_xM$, we transform the geodesic equation (\ref{gl:2_GeodDGL}) in a system of first order
\[   \dot z (t) = f(t,z) ,  \]
where
\bean
\begin{matrix}
f_1(t,z(t)) &=& z_3(t), \\
f_2(t,z(t)) &=& z_4(t), \\
f_3(t,z(t)) &=& -\tn^{-1}(\overline z(t)) \left[ \overline z^T(t)\begin{pmatrix}
                                              \frac{\partial \tn(x)}{\partial x_1} & \frac{\partial \tn(x)}{\partial x_2} \\
                                              \frac{\partial \tn(x)}{\partial x_2} & -\frac{\partial \tn(x)}{\partial x_1}
                                             \end{pmatrix} \overline z(t) \right] \text{ and} \\ &&\\
f_4(t,z(t)) &=& - \tn^{-1}(\overline z(t)) \left[ \overline z^T(t)\begin{pmatrix}
                                              -\frac{\partial \tn(x)}{\partial x_2} & \frac{\partial \tn(x)}{\partial x_1} \\
                                              \frac{\partial \tn(x)}{\partial x_1} & \frac{\partial \tn(x)}{\partial x_2}
                                              \end{pmatrix} \overline z(t) \right].
\end{matrix}
 \eean
Thereby $\overline z(t) :=  (z_1(t) , \, z_2(t))^T$ for $t\in I$, $I\subset \RR$ compact. Because $\tn$ is continuously differentiable, $f$ is continuously differentiable, too. Furthermore $f$ fullfills a Lipschitz condition, which we show by using the mean value theorem. For any $\alpha > 0$ let
\[  S_\alpha = \big\{ (t,y) : t\in I,\; \|y-z(t)\|\leq \alpha \big\} \subset I\times \RR^4.  \]
The mean value theorem guarantees the existence of $(t,z)\in S_\alpha$ such that
\[  \big( f(t, x_1(t)) - f(t, x_2(t)) \big) = \nabla_z f (t, z(t)) \cdot \big(x_1 (t) - x_2 (t) \big)  \]
for all $(t,x_1), (t,x_2)\in S_\alpha$. Since $\tn\in C^2 (M)$ and $\tn>0$, it is easy to prove that
\[   L := \sup_{(t,z)\in S_\alpha} \|\nabla_z f (t,z)\| < \infty  \]
The assertion now follows from \cite[Ch. III]{WALTER:98}.
\end{proof}
\\[1mm]
\begin{definition}
A Riemannian metric $g$ on a compact manifold $M$ is called \emph{simple}, if the boundary $\partial M$ is strictly convex and every two points $x,y\in M$ are connected by a unique
geodesic curve which depends smoothly on $x,y$. A geodesic $\gamma : [a,b]\to M$ is called \emph{maximal} if it can not be extended to a segment
$[a-\varepsilon_1,b+\varepsilon_2]$ for any $\varepsilon_1,\varepsilon_2\geq 0$. The metric $g$ is called \emph{dissipative}, if it is simple and if for every point $x\in M$ and vector $0\not= \xi \in T_x M$ the maximal geodesic $\gamma_{x,\xi} (t)$ is defined on a finite segment $[\tau_- (x,\xi), \tau_+ (x,\xi)]$.\\[1ex]
\end{definition}
If the metric $g$ is simple, then obviously every geodesic is also a shortest curve in the sense of Definition \ref{D-distance}.

\subsection{Modeling TOF measurements}

Our modelling bases on an important physically axiom, \emph{Fermat's principle}. It can be summarized as follows:\\
\begin{quote}
A wave signal, which moves from one point to another, always follows the locally shortest path, such that the time of flight is at its minimum. This means that the acceleration in every point 
disappears in path direction.\\[1ex]
\end{quote}
As a consequence of this axiom it follows that the signals move along geodesic curves. 
According to this axiom  we are going to model ultrasound beams in an inhomogeneous medium with refractive index $\tn (x)$ as geodesic curves associated with the metric
\be\label{metric-fermat}
g^{\tn} (x) = \tn^2 (x) (\delta_{ij})\,.
\ee
We have now all ingredients together to describe the mathematical model of our measurement process.
\newline
\begin{definition}[Time-of-flight mapping] \label{def:2_Laufzeitabbildung}
Let $(M,g^{\tn})$ be a compact Riemannian manifold, where $g^{\tn}$ is the metric (\ref{metric-fermat}). We call the mapping
 \be\label{TOF-mapping}
  u:T^0M \to \RR^+, (x,\xi) \mapsto \int_{\tau_-(x,\xi)}^0 \tn\left( \gamma_{x,\xi}(t) \right) \, dt
 \ee
\emph{time-of-flight mapping} (TOF mapping), where
\[
 T^0M := \left\{ (x,\xi) \in TM : \xi \neq 0 \right\},
\]
$\gamma_{x,\xi}^{\tn}:\RR \to M$ is parametrised with respect to arc length and
\[
 \tau_- (x,\xi) := \max\left\{ \tau \in (-\infty,0] : \gamma_{x,\xi}^{\tn}(\tau) \cap \partial M \neq \emptyset \right\}
\]
is the moment where $\gamma_{x,\xi}^{\tn}$ intersects the boundary $\partial M$ for the first time.
\end{definition}
\\[2mm]
The following definition addresses the practical situation that we have measurements at the boundary $\partial M$. In this case we have to distinguish whether the ultrasound wave enters or leaves
the domain $M$.\\
\begin{figure}[H]
 \centering
 \includegraphics[width=.5\textwidth]{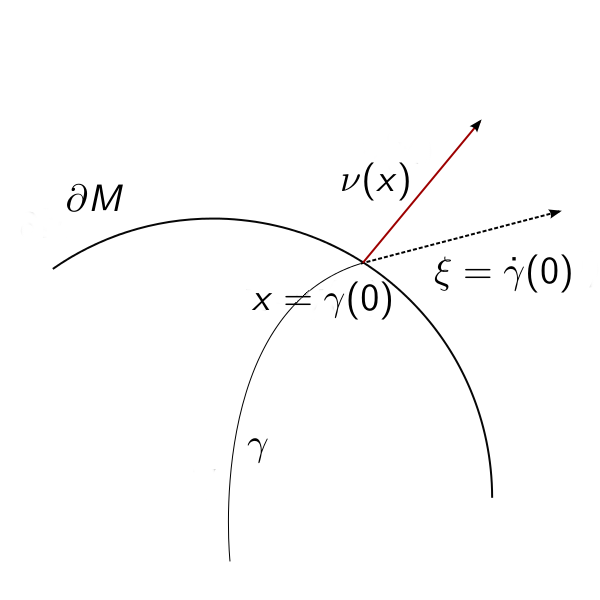}
 \caption{In $x \in \partial M$ we have $\< \xi, \nu(x) \> \geq 0$ (red arrow) at $\partial M$ and $\xi$ is the 
tangential vector (dotted arrow) in $x$. There is an ``Outflow'' with measured data $T(\gamma) = \int_{\gamma} \tn (y) \, dl(y) \geq 0$. In the case of an ``Inflow'',
$\< \xi, \nu(x) \> < 0$, the geodesic $\gamma$ would be outside from $M$ and hence $T(\gamma) = 0$.}
 \label{fig:2_Messrichtung}
\end{figure}
\begin{definition}[Inflow, Outflow]
 Let 
$$ \Omega M := \left\{ (x,\xi) \in TM : \|\xi\|_g^2 := g_{ij}(x)\xi^i\xi^j  = 1 \right\}.$$
We call
 \[
  \partial_+\Omega M := \left\{ (x,\xi) \in \Omega M : x \in \partial M, \< \xi, \nu(x) \> \geq 0 \right\}
 \]
\emph{Outflow} and
 \[
  \partial_-\Omega M := \left\{ (x,\xi) \in \Omega M : x \in \partial M, \< \xi, \nu(x) \> < 0 \right\}
 \]
\emph{Inflow}, where $\nu(x)$ is the outer normal at $M$ in $x$. We have
\[
 \partial \Omega M := \partial_+ \Omega M \cup \partial_- \Omega M
\]
and $\partial_{\pm}\Omega M$ are compact manifolds.
\end{definition}
\\[2mm]
The situation is illustrated in Figure \ref{fig:2_Messrichtung}.
The following lemma is proven in \cite{s}.\\
\begin{lemma}[{\cite[Lemma 4.1.1]{s}}]\label{L-tau-smooth}
Let $(M,g)$ be a compact, dissipative Riemannian manifold. Then the mapping $\tau_- : \partial_+ \Omega M \to \RR$ is a smooth function.\\[1ex]
\end{lemma}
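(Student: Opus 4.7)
The plan is to apply the implicit function theorem to the equation that characterises the exit time of the backward geodesic. I would introduce a smooth defining function $\rho:U\to\RR$ of $\partial M$ in a neighbourhood $U\supset\partial M$, chosen so that $\rho<0$ in the interior of $M$, $\rho=0$ on $\partial M$, and $\nabla\rho\neq 0$ on $\partial M$ (a signed Euclidean distance is enough). Set
\be
F(x,\xi,t):=\rho\bigl(\gamma^{\tn}_{x,\xi}(t)\bigr).
\ee
The function $F$ is smooth in all its arguments: this is the smooth-parameter version of the ODE argument used in the proof of Lemma \ref{kor:2_StetigeGeodaeten}, upgraded from continuous to $C^\infty$ by using that $\tn$ is smooth enough for the right-hand side of the geodesic system to depend smoothly on $(x,\xi,t)$. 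By dissipativity, $\tau_-(x,\xi)$ is finite on $\partial_+\Omega M$ and satisfies $F(x,\xi,\tau_-(x,\xi))=0$.

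On the open subset $\{(x,\xi)\in\partial_+\Omega M:\langle\xi,\nu(x)\rangle>0\}$ of strictly inflowing directions, the backward geodesic enters $M$ transversally at $t=0$ and re-crosses $\partial M$ at a time $\tau_-(x,\xi)<0$. I would argue that this re-crossing is also transversal: strict convexity of $\partial M$ (a part of the simple-metric hypothesis) rules out a tangential exit, since a geodesic tangent to a strictly convex boundary leaves $M$ immediately. Consequently
\be
\partial_t F\bigl(x,\xi,\tau_-(x,\xi)\bigr)=\bigl\langle\nabla\rho(\gamma^{\tn}_{x,\xi}(\tau_-)),\,\dot\gamma^{\tn}_{x,\xi}(\tau_-)\bigr\rangle\neq 0,
\ee
and the implicit function theorem delivers a unique smooth local solution of $F(x,\xi,\tau)=0$, which must coincide with $\tau_-$ by its characterisation.

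The remaining case is the glancing set $\{(x,\xi)\in\partial_+\Omega M:\langle\xi,\nu(x)\rangle=0\}$, which is the boundary of $\partial_+\Omega M$ as a manifold with boundary; here $\tau_-\equiv 0$ and $\partial_t F$ vanishes, so IFT is not directly applicable. To handle this I would pass to boundary normal coordinates $(y,h)$ in which $\partial M=\{h=0\}$ and $M=\{h\geq 0\}$ locally, and Taylor expand
\be
h\bigl(\gamma^{\tn}_{x,\xi}(t)\bigr)=a(x,\xi)\,t+b(x,\xi)\,t^2+O(t^3),
\ee
where $a$ vanishes on the glancing set but $b$ is strictly positive there thanks to strict convexity of $\partial M$ (positive definiteness of its second fundamental form). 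A Morse-lemma-type factorisation of this expansion produces a smooth extension of $\tau_-$ across the glancing locus, completing the proof.

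The main obstacle is precisely this glancing behaviour. On the open transversal part the lemma is essentially a one-line consequence of the implicit function theorem applied to $F$; the non-trivial content is smoothness up to the tangential directions, and this is exactly where the simple/dissipative hypothesis, and in particular the strict convexity of $\partial M$, is indispensable. A secondary technical point is to ensure enough regularity of $\tn$ so that the geodesic flow is genuinely smooth in initial data; the $C^2$-assumption in Lemma \ref{kor:2_StetigeGeodaeten} only yields $C^1$ dependence and would have to be strengthened (or the statement adjusted to $C^k$-smoothness of $\tau_-$ for appropriate $k$).
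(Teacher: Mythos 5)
The paper offers no proof of this lemma; it is quoted directly from Sharafutdinov \cite[Lemma 4.1.1]{s}, so there is no internal argument to compare against. Your proposal is, in outline, a faithful reconstruction of the standard proof used there: implicit function theorem on $F(x,\xi,t)=\rho(\gamma_{x,\xi}(t))$ where the exit is transversal, and strict convexity of $\partial M$ to control the glancing directions, which is indeed where the dissipativity/simplicity hypothesis does its work. The one place your write-up is softer than it needs to be is the final ``Morse-lemma-type factorisation'': the clean way to finish is Hadamard's lemma, writing $\rho(\gamma_{x,\xi}(t)) = t\,G(x,\xi,t)$ with $G$ smooth, $G(x,\xi,0)=\langle\nu(x),\xi\rangle$ and $\partial_t G(x,\xi,0)$ equal, up to a factor $1/2$, to the second-fundamental-form term, which is strictly positive by convexity; then $\tau_-$ is characterised as the zero of $G$, and a single application of the implicit function theorem to $G$ handles the transversal and glancing cases uniformly, so your two-case split is unnecessary. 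Your closing caveat about regularity is moot for this paper's purposes, since Section 2.4 imposes $\tn\in C^\infty(M)$ as a standing assumption, so the geodesic flow depends smoothly on initial data and the argument goes through as stated.
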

The data acquired by TOF measurements can now be modeled as integrals of $\tn$ along geodesics associated with the metric $g^{\tn}$. To this end let $\tn \in C^\infty(M)$
and $\gamma^{\tn}_{x,\xi}$ the solution of the geodesic equation (\ref{gl:2_GeodDGL}) with respect to the metric $g^{\tn}$. The mapping $u^{meas} : \partial\Omega M \to \RR^+$ with
\be
 u^{meas}(x,\xi) = \begin{cases}
               0 & \text{ if } (x,\xi) \in \partial_-\Omega M \\
               \int_{\gamma^{\tn}_{x,\xi}} \tn (y) \, \mathrm{d}l(y) & \text{ if } (x,\xi) \in \partial_+\Omega M \\
              \end{cases}, \label{gl:2_Randwerte}
\ee
assigns a geodesic curve starting in $x\in \partial M$ with tangent $\xi\in T_x M$ its travel time until it leaves the domain. This is why we call $u^{meas}$ the \emph{TOF mapping}.
The inverse problem consists of computing the refractive index $\tn$ from $u^{meas}$. The forward operator is given by the ray transform
\be
R (\tn )(x, \xi) := \int_{\gamma^{\tn}_{x,\xi}} \tn (z) \, \mathrm{d}l(z), \qquad \mbox{for } (x,\xi) \in \partial_+ \Omega M .\label{gl:Operatorgleichung}
\ee
For $a \in C^\infty(M)$ we furthermore define the \emph{linearized ray transform}
\[
  R_a (\tn )(x, \xi) := \int_{\gamma^a_{x,\xi} } \tn (z) \, dl, \qquad \forall (x,\xi) \in \partial_+ \Omega M
 \]
where $\gamma^a_{x,\xi}$ is the solution of \eqref{gl:2_GeodDGL} with respect to the metric $g^a$ and initial values $\gamma^a_{x,\xi}(0) = x$ and 
$\dot \gamma^a_{x,\xi}(0) = \xi$. If $a = \tn$, we obviously have
 \[
  R_{\tn}(\tn ) = R(\tn ).
 \]
The inverse problem of determining the refractive index $\tn$ from TOF measurements finally means to find a solution of
\be\label{IP-TOF}
R (\tn) = u^{meas} .
\ee
Note that the crucial point is that the curve along which $R$ integrates depends on the integrand $\tn$ turning (\ref{IP-TOF}) into a highly nonlinear, ill-posed problem. 
\newline

\subsection{Mathematical properties of $R$ and $R_a$}

From now on we assume that the refractive index $\tn\in C^\infty (M)$ and $(M,g^{\tn})$ is a compact, dissipative Riemannian manifold (CDRM). We recall that this implies that for
any two points $x,y\in M$ there exists a unique, maximal geodesic $\gamma$ connecting $x$ and $y$ which at the same time is the shortest path between these points.
For proving the continuity of $R$ it is useful to introduce the phase function following the outlines of Guillemin and Sternberg \cite{gs}. Compare also
\cite[Section 2]{ps}.\\
For $s\in\RR$ and $\theta=(\cos\varphi,\sin\varphi)^\top \in S^1 := \{ \theta\in\RR^2 : |\theta|=1 \}$,
$\varphi\in [0,2\pi)$, we denote by $\gamma_{\theta,s} : [\tau_-,0]\to\RR^2$ the unique solution of the geodesic
equation (\ref{gl:2_GeodDGL}) with respect to the refractive index $\tn$ and initial values
\be\label{phase-setting}
 \gamma_{\theta, s}(0) = r \theta + s \theta^\perp \text{ and } \dot \gamma_{\theta,s}(0) = \theta.
\ee
Here, $r > \mathrm{diam}(M)/2$ and $\tau_- < 0$ is the unique parameter where $\gamma_{\theta,s}$ intersects the boundary $\partial M$ for the second time
and $\theta^\perp = (-\sin\varphi,\cos\varphi)^\top\in S^1$ is perpendicular to $\theta$.
The situation is illustrated in Figure \ref{fig:2_Phasenfunktion}.
It is now possible to define $\gamma_{\theta,s}$ as level curves of a phase function $\Phi$.\\
\begin{figure}[H]
 \centering
 \includegraphics[width=.5\textwidth]{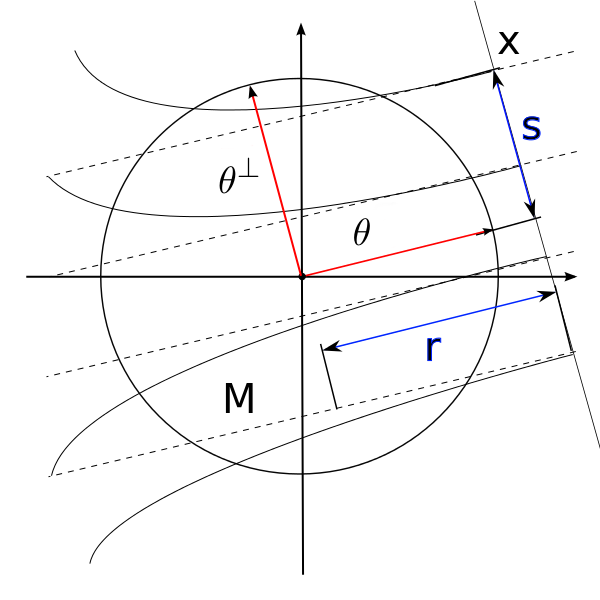}
 \caption{For a given direction $\theta \in S^1$ and $\theta^\perp \in S^1$ (red arrows) a point $x \in 
\RR^2$ is defined by the two parameters $r,s \in \RR$ (blue) such that $x = r \theta +  s \theta^\perp$. The black curves represent geodesics passing $M$ initiating from the straight line $r\theta + 
\spann{\theta^\perp}$ with vector of direction $\theta$.}
 \label{fig:2_Phasenfunktion}
\end{figure}
\begin{lemma}[{\cite[cf. Prop. 5.2]{gs}}]\label{lem:2_Phasenfunktion}
Let $g$ be a metric such that for $\theta \in S^1$ and $s, s' \in \RR$ with $s \neq s'$ the geodesic curves $\gamma_{\theta,s}$ and $\gamma_{\theta, s'}$ do not intersect. Then 
the matrix $\big( \partial_t \gamma_{\theta,s} (t),\partial_s \gamma_{\theta,s} (t)\big)$ is regular for fixed $\theta$ and there exists a \emph{phase 
function}
\[
 \Phi: M\times S^1 \to \RR,\qquad (y,\theta) \mapsto \Phi(y, \theta),
\]
such that the geodesic $\gamma_{\theta, s}$ is implicitly given and uniquely determined by
\[
 \Phi\left( y, \theta  \right) = s\quad \Longleftrightarrow \quad y \in \mathrm{Tr} (\gamma_{\theta, s}), \qquad s \in \RR.
\]
\end{lemma}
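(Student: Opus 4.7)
My plan is to analyse the smooth map $\Psi_\theta:(t,s)\mapsto \gamma_{\theta,s}(t)$, whose Jacobian at $(t,s)$ is precisely the matrix displayed in the statement. Differentiating the initial conditions \eqref{phase-setting} gives $\partial_t\gamma_{\theta,s}(0)=\dot\gamma_{\theta,s}(0)=\theta$ and $\partial_s\gamma_{\theta,s}(0)=\theta^\perp$, so the Jacobian at $t=0$ equals the orthogonal matrix $[\theta \mid \theta^\perp]$ and is regular there. The non-intersection hypothesis in turn yields injectivity of $\Psi_\theta$: any coincidence $\gamma_{\theta,s_1}(t_1)=\gamma_{\theta,s_2}(t_2)$ forces $s_1=s_2$, and then $t_1=t_2$ because each arc-length parametrised geodesic is itself injective.

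The heart of the argument is to promote regularity from $t=0$ to all admissible $(t,s)$. I proceed by contradiction and assume the Jacobian becomes singular at some $(t_0,s_0)$, so $\partial_s\gamma_{\theta,s_0}(t_0)=\lambda\,\dot\gamma_{\theta,s_0}(t_0)$. Interpreting $J(t):=\partial_s\gamma_{\theta,s_0}(t)$ as a Jacobi field along $\gamma_{\theta,s_0}$ and decomposing it in two dimensions as $J=\alpha\,\dot\gamma+\beta\,N$ with $N$ a unit normal, the singularity condition becomes $\beta(t_0)=0$, where $\beta$ solves the scalar Jacobi equation $\beta''+K\beta=0$ with $\beta(0)=1$. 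A second-order Taylor expansion of $\Psi_\theta$ about $(t_0,s_0)$, after absorbing the tangential displacement $\lambda\,\dot\gamma$ by a shift in $t$, then shows that the normal component of $\gamma_{\theta,s}(t_0)-\gamma_{\theta,s_0}(t_0)$ changes sign as $s$ crosses $s_0$, so $\gamma_{\theta,s}$ must cross $\mathrm{Tr}(\gamma_{\theta,s_0})$ nearby, contradicting the hypothesis. A shorter but less geometric alternative is to invoke invariance of domain: the continuous injection $\Psi_\theta$ between open subsets of $\RR^2$ is an open map, yet near a rank-one critical point its image is locally confined to a smooth one-dimensional submanifold, which is not open. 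The main obstacle I expect is the first route: one must check carefully that $\beta(t_0)=0$ forces a genuine intersection of geodesics and not merely a second-order tangency, so the sign change in the normal component has to be established quantitatively from the Jacobi equation.

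With the Jacobian shown to be regular everywhere, the inverse function theorem furnishes a smooth local inverse of $\Psi_\theta$, and the global injectivity established above allows these local inverses to be glued into a global smooth inverse $\Psi_\theta^{-1}$ defined on $\bigcup_{s\in\RR}\mathrm{Tr}(\gamma_{\theta,s})$, which covers $M$ since every point of $M$ lies on exactly one such geodesic. I then define
\[
  \Phi(y,\theta) := \pi_2\bigl(\Psi_\theta^{-1}(y)\bigr), \qquad \pi_2(t,s):=s.
\]
The equivalence $\Phi(y,\theta)=s \iff y\in\mathrm{Tr}(\gamma_{\theta,s})$ is immediate from the definition of $\Psi_\theta$ together with its injectivity, smoothness in $y$ is inherited from $\Psi_\theta^{-1}$, and smoothness in $\theta$ comes from smooth dependence of the geodesic flow on its initial data, completing the construction of the phase function.
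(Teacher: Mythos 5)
First, note that the paper does not actually prove this lemma: it is imported verbatim from Guillemin--Sternberg \cite[Prop.~5.2]{gs} and the text moves on immediately, so there is no in-paper argument to compare against. Judged on its own merits, your main route --- treating $J(t)=\partial_s\gamma_{\theta,s_0}(t)$ as a Jacobi field, reducing regularity of the Jacobian to non-vanishing of its normal component $\beta$, and deriving a contradiction with the non-intersection hypothesis when $\beta(t_0)=0$ --- is the right skeleton, and your construction of $\Phi$ from the resulting local diffeomorphisms is fine. But two points need repair. The step you yourself flag as delicate is also misstated: the normal component of $\gamma_{\theta,s}(t_0)-\gamma_{\theta,s_0}(t_0)$ is $\beta(t_0)(s-s_0)+O((s-s_0)^2)=O((s-s_0)^2)$, so its sign as $s$ crosses $s_0$ is governed by the second-order term and need not change. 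The sign change that actually produces an intersection is in $t$, not in $s$: since $\beta$ solves the second-order linear Jacobi equation and $\beta(0)\neq 0$, uniqueness forces $\beta'(t_0)\neq 0$, hence $\beta$ changes sign across $t_0$; then for a \emph{fixed} small $s-s_0$ the signed normal distance from $\gamma_{\theta,s}(t)$ to $\mathrm{Tr}(\gamma_{\theta,s_0})$ equals $\beta(t)(s-s_0)$ plus a uniformly $O((s-s_0)^2)$ remainder, so it takes opposite signs at $t_0\pm\delta$ once $|s-s_0|$ is small enough, and the intermediate value theorem yields a genuine crossing. Without invoking $\beta'(t_0)\neq 0$ the tangency could a priori be of even order and no contradiction would follow.

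The ``shorter alternative'' via invariance of domain does not work and should be deleted. An injective smooth map between open subsets of $\RR^2$ can perfectly well be open and still have a singular Jacobian at some points: $(t,s)\mapsto(t,s^3)$ is injective, open, and rank-deficient along $s=0$. Your claim that near a rank-one critical point the image is confined to a one-dimensional submanifold requires the rank to be \emph{constantly} one in a neighborhood (constant-rank theorem), which is exactly what you cannot assume at an isolated degeneracy. So injectivity plus openness cannot replace the Jacobi-field argument; the linear second-order ODE structure of the variation field is the essential ingredient. Two smaller loose ends: the injectivity of each individual $\gamma_{\theta,s}$ (no self-intersections) and the fact that every $y\in M$ lies on some $\gamma_{\theta,s}$ are asserted rather than derived from the hypotheses; both hold in the simple/dissipative setting the paper works in, but the lemma as stated assumes only pairwise non-intersection, so they deserve at least a remark.
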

\hfill\\
In that sense geodesic curves can be interpreted as manifolds of constant phase of a wave field.\\
\begin{example}
Let $M:=\BB:= \{x\in\RR^2 : |x|\leq 1\}$ and consider $(\BB,g_0)$, i.e. the unit disk equipped with the Euclidean metric. Then the geodsics are straight lines and $\Phi : \BB\times S^1 \to \RR$ is
given by
\[   \Phi (y,\theta) = \langle y,\theta^\perp \rangle \]
and $\Phi (y,\theta)=s$ is the usual parametrization of straight lines in parallel geometry.
Here, $\theta=\theta (\varphi) = (\cos\varphi,\sin\varphi)^\top$ and $\theta^\perp = \theta (\varphi+\pi/2) = (-\sin\varphi,\cos\varphi)^\top$ for $\varphi \in [0,2\pi)$.
Please note that by (\ref{phase-setting}) the normal vector of a line is $\theta^\perp$ instead of $\theta$ as usual.\\[1ex]
\end{example}
In Euclidean geometry (i.e. $\tn=1$) it is easy to determine the boundary intersection points $p,q\in \partial M$
of the straight line starting at $x\in M$ with vector of direction $\theta\in S^1$. For $\tn\not=1$ the situation is different.
This is why we construct a so-called \emph{geodesic projection}, a mapping which determines the intersection point with the boundary and the corresponding tangential vector
of a geodesic curve that passes through $y\in M$.\\
%
%\\[2mm]
%The phase function can be understood as a geodesic projection in the sense of mapping an inner point $y \in M$ to a boundary point $x\in\partial M$ by given direction $\theta$ such that a geodesic with 
%initial values $(x,\theta)$ moves through $y$. We can catch the boundary point $x \in \partial M$ if we do not follow the geodesic to the straight line $r\theta + \spann{\theta^\perp}$, but to the 
%boundary $\partial M$,
%To do this we define for a given direction $\theta \in S^1$ the sets
%\bean
% \Lambda_\theta &:=& 
%\left\{ x = \gamma_{\theta, s}(\tau_0) \in \partial M : s \in \RR,\: \tau_0 = \max \big\{ \gamma_{\theta,s}^{-1}\big( \mathrm{Tr}(\gamma_{\theta,s}) \cap \partial M\big) \big\} \right\}\\
% \dot{\Lambda}_\theta & := & 
%\left\{ \xi = \dot{\gamma}_{\theta, s}(\tau_0) \in \partial M : s \in \RR,\: \tau_0 = \max \big\{ \gamma_{\theta,s}^{-1}\big( \mathrm{Tr}(\gamma_{\theta,s}) \cap \partial M\big) \big\} \right\}
%\eean
%and define for fixed $\theta\in S^1$ the mappings
%\begin{align*}
%& \Psi_\theta: \Lambda_\theta \to \RR\,,\qquad \Psi_\theta (x) := \Phi (x,\theta)\,,\\
%& \dot{\Psi}_\theta: \dot{\Lambda}_\theta \to 
%\end{align*}

\begin{definition}[Geodesic projection] \label{def:2_GeodProjektion}
 Adopt the assumptions of Lemma \ref{lem:2_Phasenfunktion}. The mapping $\Pi: S^1 \times \RR \to \partial_+ \Omega M$, given by
 \[
  (\theta, s) \mapsto \Pi(\theta, s) := (x,\xi)\,,
 \]
where $(x,\xi)\in\partial_+ \Omega M$ is uniquely determined by
\[   (x,\xi) = \big(\gamma_{\theta, s}(\tau_0),\dot{\gamma}_{\theta, s}(\tau_0)\big)\quad \mbox{with}
\quad \tau_0 = \max \big\{ \gamma_{\theta,s}^{-1}\big( \mathrm{Tr}(\gamma_{\theta,s}) \cap \partial M\big) \big\} ,
\]
is called \emph{geodesic projection} on the boundary $\partial_+ \Omega M$.\\[1ex]
\end{definition}
The construction of $\Pi (\theta,s)_1=x$ is illustrated in Figure \ref{fig:2_Projektion}. The geodesic projection links the 2D parallel geometry in CT to the
more general case of an inhomogeneous medium with given metric $g^{\tn}$; a fact which proves very useful for
defining the backprojection operator and the implementation of our numerical solution approach for (\ref{IP-TOF}). Please note that we need the assumptions
of Lemma \ref{lem:2_Phasenfunktion} and the existence of a phase function that $\Pi$ is well-defined.\\
\begin{figure}[H]
 \centering
 \includegraphics[width=.5\textwidth]{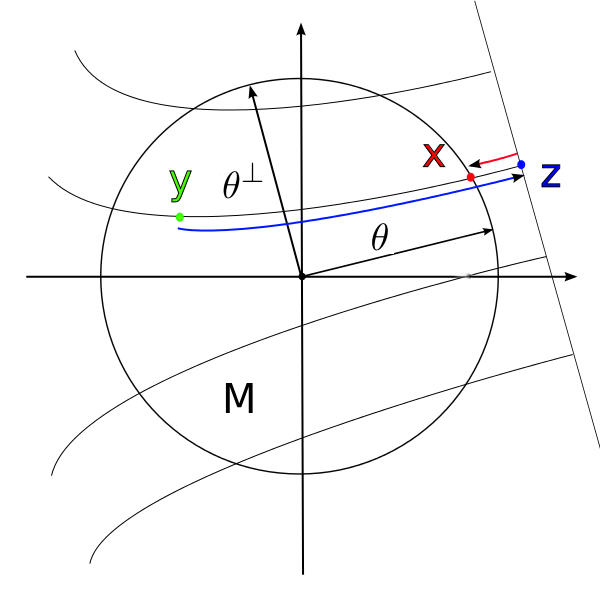}
 \caption{Illustration of the first component of the geodesic projection $\Pi_1$. At first the green point $y \in M$ is mapped by $\Phi$ (blue arrow) on $z = \Phi(y,\theta) \theta^\perp + r \theta$ (blue point) and subsequently it is mapped 
%by $\Psi^{-1}_\theta$ (red arrow) 
to the boundary $x\in \partial M$ (red point).}
\label{fig:2_Projektion}
\end{figure}
If $a\in C^\infty (M)$ generates a metric $g^a$ which is dissipative, then the linearized operator $R_a$ is continuous.\\
\begin{theorem}[Th. 4.2.1 from \cite{s}]\label{sat:2_StetLinVO}
 Let $a \in C^\infty(M)$ such that the corresponding metric $g^a_{ij}=a^2 \delta_{ij}$ turns $(M,g^a)$ into a CDRM. Then, the linearized ray transform 
 $R_a: L^2(M) \to L^2(\partial_+ \Omega M)$ is continuous.\\[1ex]
\end{theorem}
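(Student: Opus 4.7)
\medskip
\noindent\textbf{Proof plan.} The plan is a two-step estimate: first a pointwise Cauchy--Schwarz bound along each geodesic, then a Santaló-type change of variables to convert the resulting double integral into an integral over $M$.

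\emph{Step 1 (Cauchy--Schwarz along the geodesic).} For fixed $(x,\xi)\in\partial_+\Omega M$ I write
\[
 |R_a\tilde n(x,\xi)|^2 \;=\; \Bigl|\int_{\tau_-(x,\xi)}^{0}\tilde n\bigl(\gamma^a_{x,\xi}(t)\bigr)\,\mathrm{d}t\Bigr|^2
 \;\le\;|\tau_-(x,\xi)|\,\int_{\tau_-(x,\xi)}^{0}\bigl|\tilde n(\gamma^a_{x,\xi}(t))\bigr|^2\,\mathrm{d}t.
\]
By Lemma \ref{L-tau-smooth} the function $\tau_-$ is smooth on the compact manifold $\partial_+\Omega M$, so $T:=\sup_{\partial_+\Omega M}|\tau_-|<\infty$, and the first factor can be replaced by this uniform bound.

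\emph{Step 2 (change of variables).} Integrating over $\partial_+\Omega M$ against its natural surface measure $\mathrm{d}\sigma$ and exchanging the order of integration, I need to control
\[
 I(\tilde n)\;=\;\int_{\partial_+\Omega M}\int_{\tau_-(x,\xi)}^{0}\bigl|\tilde n(\gamma^a_{x,\xi}(t))\bigr|^2\,\mathrm{d}t\,\mathrm{d}\sigma(x,\xi).
\]
Because $(M,g^a)$ is a CDRM, the map
\[
 \Psi\colon\bigl\{(x,\xi,t): (x,\xi)\in\partial_+\Omega M,\;\tau_-(x,\xi)\le t\le 0\bigr\}\to\Omega M,\qquad \Psi(x,\xi,t)=\bigl(\gamma^a_{x,\xi}(t),\dot\gamma^a_{x,\xi}(t)\bigr),
\]
is a smooth bijection (every point of $\Omega M$ is reached by exactly one maximal backward geodesic running out to the outflow boundary). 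Santaló's formula identifies the pushforward of $|\langle\xi,\nu(x)\rangle|\,\mathrm{d}\sigma\,\mathrm{d}t$ under $\Psi$ with the Liouville measure on $\Omega M$; after integrating out the $S^1$-fibre this gives
\[
 \int_{\partial_+\Omega M}|\langle\xi,\nu(x)\rangle|\int_{\tau_-(x,\xi)}^{0}\!\!f\bigl(\gamma^a_{x,\xi}(t)\bigr)\,\mathrm{d}t\,\mathrm{d}\sigma(x,\xi)\;=\;2\pi\!\int_M f(z)\,a^2(z)\,\mathrm{d}z
\]
for any $f\ge 0$. Applied to $f=|\tilde n|^2$ this bounds the weighted version of $I$ by $C_a\,\|\tilde n\|_{L^2(M)}^2$ with $C_a=2\pi\max_M a^2$.

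\emph{Step 3 (removing the grazing weight).} The remaining issue is passing from the Santaló-weighted measure $|\langle\xi,\nu\rangle|\,\mathrm{d}\sigma$ to $\mathrm{d}\sigma$ itself. On the bulk $\{|\langle\xi,\nu\rangle|\ge\delta\}$ the weight is bounded below by $\delta$ and the identity directly yields the estimate; on the grazing strip $\{|\langle\xi,\nu\rangle|<\delta\}$ strict convexity of $\partial M$ (inherent to the simple metric $g^a$) forces $|\tau_-(x,\xi)|=O(\delta^{1/2})$, so Step~1 already shows the contribution is $O(\delta)\,\|\tilde n\|_{L^2(M)}^2$. Optimising in $\delta$ gives $\|R_a\tilde n\|_{L^2(\partial_+\Omega M)}^2\le \tilde C_a\,\|\tilde n\|_{L^2(M)}^2$, proving continuity.

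\emph{Main obstacle.} The only non-routine point is reconciling the natural boundary measure $\mathrm{d}\sigma$ with the Santaló measure; the convexity argument in Step 3 handles it, but it is what genuinely uses the dissipative/simple assumption beyond mere compactness and finite geodesic length.
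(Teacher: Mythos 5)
Your overall strategy --- a pointwise Cauchy--Schwarz estimate along each geodesic followed by an integral-geometric change of variables over the fan of geodesics --- is exactly the structure of the argument this theorem rests on. The paper itself gives no proof, deferring entirely to \cite[Th.~4.2.1]{s}; the argument there (and the sketch the authors drafted) has precisely your two steps, the only substantive difference being the choice of change of variables. The paper's version parametrizes the geodesic family by the phase function of Lemma~\ref{lem:2_Phasenfunktion}, i.e.\ by $(\theta,s)\in S^1\times(-r,r)$, and converts the double integral back to $\int_M|f|^2\,\mathrm{d}x$ by a coarea-type identity, which quietly absorbs the Jacobian; you instead use the Santal\'o formula in the fan coordinates $(x,\xi,t)$, which forces you to confront that Jacobian explicitly as the weight $|\langle\xi,\nu(x)\rangle|$. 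Your route is the more honest one and is closer to Sharafutdinov's actual proof. Two imprecisions in your Step 3, both repairable: first, strict convexity of $\partial M$ gives $|\tau_-(x,\xi)|\le C\,|\langle\xi,\nu(x)\rangle|$ (the chord length is comparable to the sine of the grazing angle), so the bound on the grazing strip is $O(\delta)$, not $O(\delta^{1/2})$; second, the grazing contribution is \emph{not} controlled ``by Step 1 alone'' --- since $f\in L^2(M)$ has no trace on an individual curve, you must apply the Santal\'o identity once more on the strip to turn $\int_{\tau_-}^0|f(\gamma^a_{x,\xi}(t))|^2\,\mathrm{d}t$ into an $L^2(M)$ quantity. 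Once you have $|\tau_-|\le C|\langle\xi,\nu\rangle|$ the entire $\delta$-splitting is unnecessary: insert that inequality immediately after Cauchy--Schwarz and apply Santal\'o a single time over all of $\partial_+\Omega M$. With that simplification the proof is correct and complete.
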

Under the assumption that the functions $a$ generating dissipative metrics $g^a$ are dense in $L^2 (M)$, then we even can prove the continuity
of the nonlinear operator $R$.\\
\begin{theorem}
Suppose that the subset
\[
\mathcal{M} := \big\{ a\in L^2 (M)\cap C^\infty (M) : (M,g^a) \mbox{is a CDRM}  \big\}
\]
is dense in $L^2 (M)$. Then $R: \mathcal{D}(R):=L^2 (M)\cap C^\infty (M) \to L^2 (\partial_+ \Omega M)$ is continuous.\\
\end{theorem}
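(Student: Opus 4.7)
The plan is to deduce continuity of the nonlinear operator $R$ from three ingredients already available: the continuity of the linearization $R_a$ for $a\in\mathcal{M}$ (Theorem \ref{sat:2_StetLinVO}), the identity $R(\tn)=R_\tn(\tn)$, and the continuous dependence of geodesic curves on the generating refractive index (Lemma \ref{kor:2_StetigeGeodaeten}). The density of $\mathcal{M}$ in $L^2(M)$ is what allows us to interpose a well-behaved auxiliary $a\in\mathcal{M}$ between $\tn$ and the sequence members $\tn_k$.

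Concretely, fix $\tn\in\mathcal{D}(R)$ and let $\tn_k\in\mathcal{D}(R)$ with $\tn_k\to\tn$ in $L^2(M)$. Given $\varepsilon>0$ choose $a\in\mathcal{M}$ with $\|a-\tn\|_{L^2(M)}<\varepsilon$, and decompose, using linearity of $R_a$ in its argument,
\[
R(\tn_k)-R(\tn)\;=\;\bigl[R(\tn_k)-R_a(\tn_k)\bigr]\;+\;R_a(\tn_k-\tn)\;+\;\bigl[R_a(\tn)-R(\tn)\bigr].
\]
The middle term is immediate: Theorem \ref{sat:2_StetLinVO} supplies a constant $C_a$ with $\|R_a(\tn_k-\tn)\|_{L^2(\partial_+\Omega M)}\leq C_a\|\tn_k-\tn\|_{L^2(M)}\to 0$ as $k\to\infty$. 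For the outer two terms one compares integrals of a common smooth integrand, $\tn$ or $\tn_k$, along two geodesic families generated by nearby refractive indices. Lemma \ref{kor:2_StetigeGeodaeten} gives uniform convergence of $\gamma^a_{x,\xi}$ and $\dot\gamma^a_{x,\xi}$ to $\gamma^\tn_{x,\xi}$ and $\dot\gamma^\tn_{x,\xi}$ on the compact set $\partial_+\Omega M$, so a mean-value / change-of-parameter argument gives a pointwise bound of the form $C\,\|\tn\|_{C^1}\cdot\sigma(a,\tn)$ where $\sigma$ quantifies closeness of the two metrics. Integrating over the compact set $\partial_+\Omega M$ yields $\|R_a(\tn)-R(\tn)\|_{L^2(\partial_+\Omega M)}\leq C'\varepsilon$, and the same estimate applies to the first bracket with $\tn_k$ in place of $\tn$, uniformly in $k$ once $\tn_k$ is sufficiently close to $\tn$. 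Sending first $k\to\infty$ and then $\varepsilon\to 0$ closes the argument.

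The principal obstacle is a mismatch of topologies: Lemma \ref{kor:2_StetigeGeodaeten} requires closeness of $a$ and $\tn$ in a $C^2$-type norm (the Lipschitz constant for the geodesic ODE is controlled by bounds on $\tn$ and $\nabla\tn$), whereas the density hypothesis on $\mathcal{M}$ is only stated in $L^2(M)$. The way out is to exploit that $\tn\in C^\infty(M)$: one first approximates $\tn$ in $C^2$ by a standard mollification, then argues that the dissipativity condition defining $\mathcal{M}$ is stable under small smooth perturbations so that the mollified function can be adjusted to lie in $\mathcal{M}$. A minor secondary point is that the set $\partial_+\Omega M$ itself depends on the metric through the normalization $\|\xi\|_{g}=1$; this is harmless once one reparametrizes geodesics to Euclidean unit tangent vectors, after which the comparison of the boundary integrals becomes a routine estimate.
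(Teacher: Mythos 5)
Your overall strategy is the same as the paper's: interpose a linearization $R_a$ at a point of $\mathcal{M}$, handle the linear piece by Theorem \ref{sat:2_StetLinVO}, and handle the remaining pieces by the continuous dependence of geodesics on the refractive index (Lemma \ref{kor:2_StetigeGeodaeten}). The difference is in the decomposition, and that difference creates a genuine gap. The paper takes a sequence $\{a_k\}\subset\mathcal{M}$ with $a_k\to a$ and writes $R(a)-R(a_k)=\bigl[R(a)-R_{a_k}(a)\bigr]+R_{a_k}(a-a_k)$, so the only geodesic-comparison term, $\int_{\gamma^a}a-\int_{\gamma^{a_k}}a$, carries the \emph{fixed} smooth integrand $a$ and compares the curves $\gamma^a$ and $\gamma^{a_k}$. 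Your three-term split instead produces the bracket $R(\tn_k)-R_a(\tn_k)=\int_{\gamma^{\tn_k}}\tn_k-\int_{\gamma^{a}}\tn_k$, in which both the integrand and one family of curves vary with $k$. Your claimed bound for it, uniform in $k$ ``once $\tn_k$ is sufficiently close to $\tn$,'' cannot be extracted from $L^2$-convergence: $\|\tn_k-\tn\|_{L^2(M)}\to 0$ controls neither $\sup_M|\tn_k|$ nor the geodesics $\gamma^{\tn_k}_{x,\xi}$ (Lemma \ref{kor:2_StetigeGeodaeten} needs $C^2$-type closeness, as you yourself observe), so this step fails. Your proposed mollification repair only concerns the choice of the auxiliary point $a$; it does nothing for the sequence $\tn_k$, which is given and cannot be replaced.

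To be fair, the concern you raise about the topology mismatch is legitimate and applies to the paper's own argument as well: the paper invokes the continuous dependence of geodesics for a sequence converging only in $L^2$, and it in effect verifies convergence of $R$ only along approximating sequences drawn from $\mathcal{M}$, not along arbitrary sequences in $\mathcal{D}(R)$. But the paper's two-term decomposition at least confines the geodesic comparison to a term with a fixed integrand and with the varying metric coming from the prescribed sequence in $\mathcal{M}$; your extra bracket makes the argument strictly harder and, as written, breaks it. If you want to keep your stronger formulation (continuity along arbitrary $L^2$-convergent sequences in $\mathcal{D}(R)$), you would need an additional hypothesis or lemma giving stability of $\gamma^{\tn_k}_{x,\xi}$ and of $\sup|\tn_k|$ under the convergence actually assumed; otherwise, revert to the paper's decomposition.
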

\begin{proof}
Let $a\in\mathcal{D}(R)$ be arbitrary and $\{a_k\}\subset \mathcal{M}$ be a sequence in $\mathcal{M}$ such that $a_k \to a$ as $k\to\infty$.
Then we can estimate
\bean
 && \left\|R(a) - R(a_k)\right\|_{L^2(\partial_+ \Omega M)} \\
 &=& \left\| R(a) - R_{a_k}(a) + R_{a_k}(a) - R(a_k) \right\|_{L^2(\partial_+ \Omega M)} \\
 &\leq& \left\| R(a) - R_{a_k}(a) \right\|_{L^2(\partial_+ \Omega M)} + \left\| R_{a_k}(a-a_k) \right\|_{L^2(\partial_+ \Omega M)}.
 \eean
Since $a_k\in\mathcal{M}$ we have that $R_{a_k}$ is linear and continuous due to Theorem \ref{sat:2_StetLinVO} and thusly
\[
  \left\| R_{a_k}(a-a_k) \right\|_{L^2(\partial_+ \Omega M)} \leq \|R_{a_k}\|_{L^2(M) \to L^2(\partial_+ \Omega M)} \left\| a - a_k \right\|_{L^2(M)}.
 \]
To tackle the first term we assume $\gamma_{x,\xi}^a (t)$, $\gamma_{x,\xi}^{a_k}(t)$ to be respective solutions of the geodesic equation (\ref{gl:2_GeodDGL})
parametrized on $t\in [0,1]$. We obtain
\bean
 && \left\| R(a) - R_{a_k}(a) \right\|_{L^2(\partial_+ \Omega M)} \\
 &=& \left\| \int_{\gamma^a_{x,\xi}}a(z) \, \mathrm{d}l(z) - \int_{\gamma^{a_k}_{x,\xi}}a(z) \, \mathrm{d}l(z) \right\|_{L^2(\partial_+ \Omega M)} \\
 &=& \left\| \int_0^1 a(\gamma^a_{x,\xi}(t))|\dot \gamma^a_{x,\xi}(t) |\, \mathrm{d}t 
- \int_0^1 a(\gamma^{a_k}_{x,\xi}(t)) | \dot \gamma^{a_k}_{x,\xi}(t) |\, \mathrm{d}t \right\|_{L^2(\partial_+ \Omega M)} \\
 &\leq& \sup_{y \in M} |a(y)|\,  \left\| \int_0^1 |\dot \gamma^a_{x,\xi}(t) - \dot \gamma^{a_k}_{x,\xi}(t) |\, \mathrm{d}t \right\|_{L^2(\partial \Omega M)}.
\eean
Let $\varepsilon>0$ be arbitrary. For $k$ sufficiently large we deduce from Lemma \ref{kor:2_StetigeGeodaeten} and Lemma \ref{L-tau-smooth}
\[   \sup_{(x,\xi)\in TM} \sup_{t\in [0,1]} \|\dot \gamma^a_{x,\xi}(t) - \dot \gamma^{a_k}_{x,\xi}(t) \| < \frac{\varepsilon}{2\sup_{y\in M} |a(y)|} . \]
A sufficiently large $k$ furthermore assures that
\[   \|a-a_k\|_{L^2 (M)} < \big( 2\|R_{a_k}\|_{L^2(M) \to L^2(\partial_+ \Omega M)}\big)^{-1} \,\varepsilon.  \]
Putting all this together we conclude
\bean
\left\|R(a) - R(a_k)\right\|_{L^2(\partial_+ \Omega M)} &\leq& 
\left\| R(a) - R_{a_k}(a) \right\|_{L^2(\partial_+ \Omega M)} + \left\| R_{a_k}(a-a_k) \right\|_{L^2(\partial_+ \Omega M)} \\
&\leq& \varepsilon /2 + \varepsilon / 2 = \varepsilon
\eean
if only $k$ is sufficiently large. This proves the theorem.
\end{proof}
\\[2mm]
Unfortunately by now there is no proof that $\mathcal{M}$ is dense in $L^2 (M)$ or not.\\[1ex]
For completeness we give two existence and uniqueness results which can be found in \cite{s2}.\\
\begin{theorem}[Th. 1.1.1 and Th. 1.2.1 from \cite{s2}]
Let $a \in C^\infty(M)$ such that it induces a simple Riemannian metric.
\begin{itemize}
 \item[i)] If the measured data $u^{meas}$ is generated by a refractive index $\tn \in C^2(M)$, then the operator equation
 \[
 u^{meas} = R_a \tn
\]
has a unique solution $\tn \in L^2(M) \cap C^2(M)$.
\item[ii)] If the measured data $u^{meas}$ is generated by a refractive index $\tn \in C^4(M)$, then the nonlinear operator equation
\[
 u^{meas} = R (\tn)
\]
has a unique solution $\tn \in L^2(M) \cap C^4(M)$.\\
\end{itemize}
\end{theorem}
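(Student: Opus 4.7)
Since this is cited from Sharafutdinov's monograph, I would follow the classical strategy based on the Pestov identity and the energy method, treating part (i) first and then bootstrapping part (ii) from it.

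For part (i), the goal is to establish injectivity of the linear operator $R_a$, since existence of an $L^2$-solution is given by hypothesis (the datum $u^{meas}$ is produced by some $\tn\in C^2(M)$). Given $f \in L^2(M)\cap C^2 (M)$ with $R_a f = 0$, I would introduce the primitive
\[
u^f(x,\xi) \;:=\; \int_{\tau_-^a(x,\xi)}^{0} f\bigl(\gamma^a_{x,\xi}(t)\bigr)\,\mathrm{d}t,\qquad (x,\xi)\in \Omega M,
\]
which, thanks to Lemma \ref{L-tau-smooth} and Lemma \ref{kor:2_StetigeGeodaeten}, is smooth enough on $\Omega M$ and satisfies the transport equation $X_a u^f = -f$, where $X_a$ is the geodesic vector field of $g^a$; the hypothesis $R_a f = 0$ gives the boundary condition $u^f|_{\partial_+\Omega M}=0$. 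The heart of the argument is then the Pestov identity for simple metrics, which produces an $L^2$ estimate of the form $\|f\|_{L^2(M)}^2 \le C\,\mathrm{(boundary\ term\ in\ } u^f)$, the boundary term vanishing identically. This forces $f=0$ and yields injectivity; regularity $f\in C^2(M)$ propagates from the ansatz by elliptic/transport regularity for $u^f$. The main technical obstacle here is setting up the Pestov identity cleanly for the conformal metric $g^a = a^2 \delta_{ij}$ and controlling the boundary contribution; I would rely on the simplicity of $g^a$ (strict convexity of $\partial M$ and absence of conjugate points) to discard the unfavorable boundary terms.

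For part (ii), I would reduce the nonlinear statement to (i) by a linearization along a one-parameter family of metrics. Given two solutions $\tn_1,\tn_2\in C^4(M)$ of $R(\tn)=u^{meas}$, define $\tn_\lambda := (1-\lambda)\tn_1 + \lambda \tn_2$ for $\lambda\in[0,1]$. Since both endpoints induce simple (hence dissipative) metrics and the set of simple metrics is open in the $C^2$ topology, I may assume $\tn_\lambda$ stays in that class for all $\lambda\in[0,1]$ (shrinking via a homotopy argument if necessary). Using the formula
\[
R(\tn_2) - R(\tn_1) \;=\; \int_{0}^{1} \frac{\mathrm d}{\mathrm d\lambda} R(\tn_\lambda)\,\mathrm{d}\lambda,
\]
and the fact that the first variation of the TOF along a conformal deformation coincides with the linearized ray transform acting on the perturbation (the geodesic contributions cancel by the first variation of arclength at a geodesic, a Fermat-type argument), the left-hand side vanishes and one obtains
\[
\int_{0}^{1} R_{\tn_\lambda}(\tn_2 - \tn_1)\,\mathrm{d}\lambda \;=\; 0.
\]
An averaged injectivity estimate from part (i), applied uniformly along the curve $\lambda\mapsto \tn_\lambda$, then forces $\tn_1=\tn_2$ in $L^2(M)$, and hence in $C^4(M)$ by the regularity of both.

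The hard part, and the step I would expect to scrutinize most carefully, is the identification of $\frac{\mathrm{d}}{\mathrm{d}\lambda}R(\tn_\lambda)[\tn_2-\tn_1]$ with $R_{\tn_\lambda}(\tn_2-\tn_1)$: one must differentiate an integral whose path of integration also depends on $\lambda$, and show that the path-variation terms vanish because each $\gamma^{\tn_\lambda}_{x,\xi}$ is a stationary point of the length functional for the metric $g^{\tn_\lambda}$. Once this first-variation identity is established (together with uniformity in $\lambda$ of the constants from part (i), which follows from the smooth dependence in Lemma \ref{kor:2_StetigeGeodaeten}), the uniqueness in (ii) is a direct corollary. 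Existence in (ii) again holds by assumption that the data is produced by some $\tn\in C^4(M)$.
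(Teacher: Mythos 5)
First, a caveat you could not have known: the paper does not prove this theorem at all. It is quoted, attribution and all, from Sharafutdinov's lecture notes (Theorems 1.1.1 and 1.2.1 of the cited reference), so there is no in-paper argument to compare against. Judged on its own terms, your outline of part (i) is the standard route: the primitive $u^f$ solving the transport equation with vanishing trace on $\partial_+\Omega M$, followed by the Mukhometov/Pestov energy identity for a simple metric. As a plan that is sound, and existence is indeed trivial since the data are assumed to be in the range.

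Part (ii), however, has a genuine gap, and it sits precisely at the step you flag for scrutiny --- though not where you expect. The first-variation identity $\frac{\mathrm{d}}{\mathrm{d}\lambda}R(\tn_\lambda)=R_{\tn_\lambda}(\tn_2-\tn_1)$ is fine (stationarity of $\gamma^{\tn_\lambda}$ for the $g^{\tn_\lambda}$-length kills the path-variation term). The problem is the conclusion: from $\int_0^1 R_{\tn_\lambda}(\tn_2-\tn_1)\,\mathrm{d}\lambda=0$ you cannot get $\tn_1=\tn_2$ by appealing to part (i). Injectivity of each $R_{\tn_\lambda}$ does not imply injectivity of the averaged operator $\int_0^1 R_{\tn_\lambda}\,\mathrm{d}\lambda$, and the quantitative form of part (i) (Mukhometov's estimate) bounds $\|f\|_{L^2}^2$ by a \emph{quadratic} form in the boundary data, which does not pass through an average over $\lambda$. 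There is also a secondary issue: openness of the class of simple metrics protects a neighborhood of each endpoint, not the whole segment, so the intermediate metrics $g^{\tn_\lambda}$ need not be simple. The classical reduction avoids both difficulties by exploiting that geodesics of a simple metric are \emph{minimizers}, not merely stationary points: setting $f=\tn_1-\tn_2$ and comparing each travel time with the integral of the same index over the \emph{other} metric's geodesic yields the two-sided inequality $R_{\tn_1}f\leq 0\leq R_{\tn_2}f$ on all boundary pairs, with equality on the diagonal. One then applies Mukhometov's identity for the single metric $g^{\tn_1}$ and uses the sign lemma stating that the boundary bilinear form (tangential derivatives of the data on $\partial M\times\partial M$) is nonpositive for any data of fixed sign vanishing on the diagonal; this forces $f=0$, using only the two endpoint metrics. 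I would replace your averaging step by this argument.
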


\subsection{The method of characteristics}

Our numerical solution scheme for (\ref{IP-TOF}) is iterative and demands for evaluating the forward operator, i.e. the computation of the TOF measurements
for a given metric, in each iteration step. We do this using the fact that the TOF function $u$ (\ref{TOF-mapping}) satisfies a partial differential equation
where the differential operator is the so called \emph{geodesic vector field}. 
\newline
\begin{definition}[Geodesic flow] \label{def:3_GeodaetischerFluss}
 Let $(M,g)$ be a Riemannian manifold and $u: T^0M \to \RR^+$ the TOF mapping (\ref{TOF-mapping}) . We call $Hu:T^0M \to \RR$ defined by
\[
 Hu(x,\xi) := \xi^i \frac{\partial u}{\partial x^i} - \Gamma^{i}_{jk}(x) \xi^j \xi^k \frac{\partial u}{\partial \xi^i} \text{, for all } (x,\xi) \in T^0M,
\]
\emph{geodesic vector field}.
\end{definition}
\\[2mm]
There s a fundamental connection between the geodesic vector field and the refractive index regarding our measure geometry.
\newline
\begin{theorem}\label{sat:3_Transportgleichung}
Let $(M,g)$ be a Riemannian manifold with the metric tensor given as $g_{ij}(x) = g^{\tn}_{ij} = \tn^2(x) \delta_{ij}$. Then for all $(x,\xi) \in T^0M$ the 
transport equation
\[
 Hu(x,\xi) = \tn(x)
\]
holds true.
\end{theorem}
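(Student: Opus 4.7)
The plan is to interpret $H$ as the infinitesimal generator of the geodesic flow on $T^0M$ and then exploit the semigroup property of the flow to turn the identity into the fundamental theorem of calculus.

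First I would show that for any smooth $f:T^0M\to\RR$,
\[
 Hf(x,\xi) \;=\; \frac{d}{ds}\Big|_{s=0} f\bigl(\phi_s(x,\xi)\bigr),
\]
where $\phi_s(x,\xi) := \bigl(\gamma_{x,\xi}(s),\dot\gamma_{x,\xi}(s)\bigr)$ is the geodesic flow associated with $g^{\tn}$. This follows immediately from the chain rule together with the first-order form of the geodesic equation (\ref{gl:2_GeodDGL}): writing the flow as $\dot x^i = \xi^i$ and $\dot \xi^i = -\Gamma^i_{jk}\xi^j\xi^k$, the chain rule produces exactly the two terms in Definition~\ref{def:3_GeodaetischerFluss}.

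Next I would compute $u\circ\phi_s$ explicitly. The semigroup property of the geodesic flow gives $\gamma_{\phi_s(x,\xi)}(t) = \gamma_{x,\xi}(s+t)$, and consequently the backward exit time shifts as $\tau_-\bigl(\phi_s(x,\xi)\bigr) = \tau_-(x,\xi) - s$. Plugging this into (\ref{TOF-mapping}) and substituting $t'=s+t$ yields
\[
 u\bigl(\phi_s(x,\xi)\bigr) \;=\; \int_{\tau_-(x,\xi)-s}^{0} \tn\bigl(\gamma_{x,\xi}(s+t)\bigr)\,dt \;=\; \int_{\tau_-(x,\xi)}^{s} \tn\bigl(\gamma_{x,\xi}(t')\bigr)\,dt' \;=\; u(x,\xi) + \int_{0}^{s}\tn\bigl(\gamma_{x,\xi}(t')\bigr)\,dt'.
\]
Differentiating this relation at $s=0$, the right-hand side produces $\tn(\gamma_{x,\xi}(0)) = \tn(x)$ by the fundamental theorem of calculus, while the first step identifies the left-hand side with $Hu(x,\xi)$. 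The theorem follows.

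The main subtlety I expect is to make sure the parametrization of $\gamma_{x,\xi}$ used in Definition~\ref{def:2_Laufzeitabbildung} is consistent with the affine parametrization implicit in (\ref{gl:2_GeodDGL}) that underlies $H$; in particular, one should check that $\tau_-$ is differentiable on $T^0M$ (which follows from the simplicity hypothesis via Lemma~\ref{L-tau-smooth} after extending from $\partial_+\Omega M$ by flow-invariance) so that the change of variables and the differentiation at $s=0$ are rigorously justified. Once these regularity points are settled, the proof reduces to the two short computations above.
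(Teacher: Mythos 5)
Your proof is correct. Note that the paper itself gives no argument for this theorem -- it only cites \cite[Section 1.2]{s} -- and what you have written is precisely the standard computation from that reference: identify $H$ with the generator of the geodesic flow $\phi_s$ via the chain rule and the first-order form of (\ref{gl:2_GeodDGL}), use the semigroup property to get $u(\phi_s(x,\xi)) = u(x,\xi) + \int_0^s \tn(\gamma_{x,\xi}(t'))\,dt'$, and differentiate at $s=0$. The parametrization caveat you flag is exactly the right one to worry about: the claim ``for all $(x,\xi)\in T^0M$'' only makes sense for the affine parametrization coming from (\ref{gl:2_GeodDGL}), not the (Euclidean) arc-length parametrization loosely stated in Definition \ref{def:2_Laufzeitabbildung}, and your proof uses the correct one.
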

\begin{proof}
% Der Beweis orientiert sich an den Ausführungen von \cite{Sharafutdinov1994}. Sei $(x,\xi) \in T^0M$ und $\gamma = \gamma_{x,\xi}:I=[\tau_-(x,\xi),0]$ die Geodäte mit $\gamma(0) = x$ und $\dot 
% \gamma(0) = \xi$. $\tau_-(x,\xi)$ sei der Zeitpunkt des ersten Schnitts mit dem Rand von $M$. Für $t_0 \in I$ gilt der Zusammenhang $\gamma_{\gamma(t_0),\dot\gamma(t_0)}(t) = \gamma(t+t_0)$ für 
%alle 
% $t \in J:=\{ s \in I : s+t_0 \in I \}$, denn $\gamma(0 + t_0) = \gamma_{\gamma(t_0),\dot\gamma(t_0)}(0) = \gamma(t_0)$ und $\dot \gamma(0 + t_0) = \dot \gamma_{\gamma(t_0),\dot\gamma(t_0)}(0) = 
%\dot 
% \gamma(t_0)$. Es gilt dann
% \[
%  u\left( \gamma(t_0),\dot\gamma(t_0) \right) = \int_{\tau_-(x,\xi)}^{t_0} \tn(\gamma(t)) \| \dot\gamma(t) \| \, dt.
% \]
% Leiten wir die Gleichung nun an der Stelle $t_0 = 0$ ab, so erhalten wir
% \[
%  \frac{\partial u}{\partial x^i}\dot \gamma^i + \frac{\partial u}{\partial \xi^i} \ddot \gamma^i = \tn (\gamma(0)) \| \dot \gamma(0) \|.
% \]
% Unter der Ausnutzung von $\dot\gamma(0) = \xi$ und der geodätischen Differentialgleichung \eqref{gl:2_GeodDGL} $\ddot \gamma + \Gamma^{i}_{jk}\dot\gamma^{j}\dot \gamma^{k} = 0$ erhalten wir
% \[
%  \tn (x) = \xi^i \frac{\partial u}{\partial x^i} - \Gamma^{i}_{jk}(x) \xi^j \xi^k \frac{\partial u}{\partial \xi^i} = Hu(x,\xi).
% \]
A proof can be found in \cite[Section 1.2]{s}.
\end{proof}
\newline
%
%%%%%%%%%% EVTL. RAUSNEHMEN %%%%%%%%%%%%%%%%%%%%%
\begin{remark} \label{bem:3_Fluss}
The differential operator
 \[
  H = \xi^i \frac{\partial}{\partial x^i} - \Gamma^{i}_{jk} \xi^j\xi^k \frac{\partial}{\partial \xi^i}
 \]
has a specific geometrical meaning. Let $G(t;\cdot,\cdot) : TM \to TM$ be the solution of
\[
 G' (t;x,\xi) = H(G(t;x,\xi))\,,\qquad G(0;x,\xi) = (x,\xi)\,.
\]
Then we have
\[
 G(t;x,\xi) = \left( \gamma_{x,\xi}(t), \dot \gamma_{x,\xi}(t) \right)
\]
for a geodesic $\gamma_{x,\xi}$. This is the \emph{geodesic flow} associated with the vector field $H$. 
\end{remark}
%%%%%%%%%%%%%%%%%%%%%%%%%%%%%%%%%%%%%%%%%%%%%%%%%
\\[2mm]
\begin{corollary} \label{kor:3_Transportgleichung}
Let $\tn\in C^\infty (M)$ be positive and the metric $g$ on $M$ defined by
 \[
 g^{\tn}(x) = g_{ij}^{\tn}(x)dx^i \, dx^j
\]
with metric tensor
\[
 g_{ij}^{\tn}(x) = \tn ^2(x)\delta_{ij}.
\]
Then
\bea
 Hu(x,\xi) &=& \xi^i \frac{\partial u}{\partial x^i}(x,\xi) + \tn^{-1}(x) \left( \frac{\partial \tn}{\partial x^i}(x) \|\xi\|^2 - 2 \xi^i \< \xi, \nabla \tn(x) \> \right) \frac{\partial u}{\partial 
\xi^i}(x,\xi) \nonumber \\
&=& \tn(x) \label{gl:3_Transportgleichung}
\eea
holds true for all $(x,\xi) \in T^0M$.\\[1ex]
\end{corollary}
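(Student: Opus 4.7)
The corollary is really just the specialization of Theorem \ref{sat:3_Transportgleichung} to the conformally Euclidean metric $g^{\tn}_{ij}=\tn^2\delta_{ij}$, so the second equality $Hu(x,\xi)=\tn(x)$ is immediate from that theorem. All the work is in rewriting the general expression
\[
 Hu(x,\xi) = \xi^i \frac{\partial u}{\partial x^i} - \Gamma^{i}_{jk}(x) \xi^j \xi^k \frac{\partial u}{\partial \xi^i}
\]
by explicitly computing the Christoffel symbols of $g^{\tn}$ and simplifying the contraction $\Gamma^{i}_{jk}\xi^j\xi^k$.

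The plan is therefore to first insert $g_{ij}=\tn^2\delta_{ij}$ and its inverse $g^{ip}=\tn^{-2}\delta^{ip}$ into the definition (\ref{gl:2_Christoffel}). Using $\partial_k g_{jp}=2\tn(\partial_k\tn)\delta_{jp}$ and cancelling, I expect
\[
 \Gamma^{i}_{jk}(x) = \tn^{-1}(x)\bigl(\delta^{i}_{j}\,\partial_k\tn + \delta^{i}_{k}\,\partial_j\tn - \delta_{jk}\,\partial_i\tn\bigr).
\]
Contracting with $\xi^j\xi^k$, the two $\delta^{i}_{\cdot}$ terms both collapse into $\xi^i\langle\xi,\nabla\tn\rangle$, while the last term contributes $-\|\xi\|^{2}\partial_i\tn$, giving
\[
 \Gamma^{i}_{jk}\xi^j\xi^k = \tn^{-1}\bigl(2\xi^i\langle\xi,\nabla\tn(x)\rangle - \|\xi\|^{2}\,\partial_i\tn\bigr).
\]
Substituting this into the definition of $H$ and flipping the overall sign produces exactly the bracketed coefficient in (\ref{gl:3_Transportgleichung}), so the first equality of the corollary is established.

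For the second equality, since $g^{\tn}_{ij}=\tn^2\delta_{ij}$ is exactly the metric covered by Theorem \ref{sat:3_Transportgleichung}, the identity $Hu(x,\xi)=\tn(x)$ for all $(x,\xi)\in T^0 M$ is inherited verbatim from that theorem (which in turn follows from differentiating (\ref{TOF-mapping}) along the geodesic flow described in Remark \ref{bem:3_Fluss}). No real obstacle appears: the only place that demands some care is keeping the index bookkeeping straight when contracting the three terms of $\Gamma^{i}_{jk}\xi^j\xi^k$, in particular recognising that the first two yield the same quantity so they combine into a factor of two, and that the raised/lowered index distinction is harmless because $\delta_{ij}$ is used throughout.
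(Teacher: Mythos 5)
Your proposal is correct and follows essentially the same route as the paper: compute the Christoffel symbols of $g^{\tn}_{ij}=\tn^2\delta_{ij}$, contract with $\xi^j\xi^k$ to get $\Gamma^i_{jk}\xi^j\xi^k=\tn^{-1}\bigl(2\xi^i\langle\xi,\nabla\tn\rangle-\|\xi\|^2\partial_i\tn\bigr)$, substitute into $H$, and inherit $Hu=\tn$ from Theorem \ref{sat:3_Transportgleichung}. The only difference is cosmetic: the paper carries out the contraction component-by-component ($i=1,2$) with explicit $2\times2$ matrices, whereas you use the general index formula for the conformal Christoffel symbols in one step.
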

\begin{proof}
 Let $(x,\xi) \in T^0M$. For $i=1$ the Christoffel symbols are given by
 \bean
 \left( \Gamma^1_{ij}(x) \right)_{i,j=1}^2 = \tn^{-1}(x)\begin{pmatrix}
                                              \frac{\partial \tn (x)}{\partial x_1} & \frac{\partial \tn(x)}{\partial x_2} \\
                                              \frac{\partial \tn (x)}{\partial x_2} & -\frac{\partial \tn(x)}{\partial x_1}
                                             \end{pmatrix}.
 \eean
 For the partial derivatives we have
 \bean
  \Gamma^1_{jk}(x)\xi^j\xi^k &=& \tn^{-1}(x)\begin{pmatrix}
                                         \xi_1 & \xi_2
                                        \end{pmatrix}
\begin{pmatrix}
                                              \frac{\partial \tn(x)}{\partial x_1} & \frac{\partial \tn(x)}{\partial x_2} \\
                                              \frac{\partial \tn(x)}{\partial x_2} & -\frac{\partial \tn(x)}{\partial x_1}
                                             \end{pmatrix}
                                             \begin{pmatrix}
                                              \xi_1 \\ \xi_2
                                             \end{pmatrix} \\
  &=& \tn^{-1}(x)\left( \frac{\partial \tn(x)}{\partial x_1}\xi_1^2 + 2 \frac{\partial \tn(x)}{\partial x_2} \xi_1 \xi_2 - \frac{\partial \tn(x)}{\partial x_1} \xi_2^2  \right) \\
  &=& \tn^{-1}(x)\left( 2\frac{\partial \tn(x)}{\partial x_1}\xi_1^2 + 2 \frac{\partial \tn(x)}{\partial x_2} \xi_1 \xi_2 - \frac{\partial \tn(x)}{\partial x_1}\|\xi\|^2 \right) \\
  &=& \tn^{-1}(x)\left( 2 \xi_1 \left[ \xi_1\frac{\partial \tn(x)}{\partial x_1} + \xi_2\frac{\partial \tn(x)}{\partial x_2} \right] - \frac{\partial \tn(x)}{\partial x_1}\|\xi\|^2 \right) \\
  &=& \tn^{-1}(x)\left( 2 \xi_1 \< \xi, \nabla \tn(x)\> - \frac{\partial \tn(x)}{\partial x_1}\|\xi\|^2 \right).
 \eean
In the same way we compute
\[
 \Gamma^2_{jk}(x)\xi^j\xi^k = \tn^{-1}(x)\left( 2 \xi_2 \< \xi, \nabla \tn(x)\> - \frac{\partial \tn(x)}{\partial x_2}\|\xi\|^2 \right)
\]
and the proposition follows immediately.
\end{proof}
\\[2mm]
\begin{remark}
In Theorem \ref{sat:3_Transportgleichung} we transformed the inverse problem (\ref{IP-TOF}) into a parameter identification problem for the transport 
equation with boundary conditions \eqref{gl:2_Randwerte}. Again we recognize that the problem is highly nonlinear, because the refractive index $\tn$ 
appears as source-term as well as parameter in the differential operator.\\[2mm]
\end{remark}
According to the transport equation \eqref{gl:3_Transportgleichung} we want to develop a method to compute the geodesic curves for a given refractive index. To
this end we use the method of characteristics.
Let $\operatorname{Graph}(u) := \left\{ \left(x,\xi,u(x,\xi)\right) : (x,\xi) \in TM \right\}$ be the graph of $u$ and
\[
 \left( x(t), \xi(t), z(t) \right) := \left( x(t), \xi(t), u\left( x(t), \xi(t) \right) \right)
\]
a curve in $\operatorname{Graph}(u)$ with $t\in I$, $I$ a compact interval. Differentiating with respect to $t$ yields
\[
 \left( \dot x(t), \dot \xi (t), \dot z (t) \right) = \left( \dot x (t), \dot \xi (t), \frac{\partial u(x(t),\xi(t))}{\partial x^i} \dot x^i(t) + \frac{\partial u(x(t),\xi(t))}{\partial \xi^i} \dot 
\xi^i(t) \right).
\]
Then $\left( \nabla_x u (x(t),\xi(t)), \nabla_\xi u (x(t),\xi(t)), -1 \right)$ is a normal of the curve at $t$ since
\bean
&& \begin{pmatrix} \nabla_x u (x(t),\xi(t)) \\ \nabla_\xi u (x(t),\xi(t)) \\ -1 \end{pmatrix} \left( \dot x (t), \dot \xi (t), \frac{\partial u(x(x(t),\xi(t))}{\partial x^i} \dot x^i(t) + 
\frac{\partial u(x(x(t),\xi(t))}{\partial \xi^i} \dot \xi^i(t) \right) \\ 
&=& \nabla_x u (x(t),\xi(t)) \cdot \dot x(t) + \nabla_\xi u(x(t),\xi(t)) \cdot \dot \xi(t) - \frac{\partial u(x(t),\xi(t))}{\partial x^i} \dot x^i(t) \\
&& - \frac{\partial u(x(t),\xi(t))}{\partial 
\xi^i} \dot \xi^i(t) \\
&=& 0.
\eean
From $Hu = \tn$ we get that $\big( \xi(t), \big(-\Gamma^{i}_{jk}(x(t)) \xi^j(t) \xi^k(t)\big)_i, \tn(x(t)) \big)$ is a tangent vector, because
\bean
&& \begin{pmatrix} \nabla_x u (x(t),\xi(t)) \\ \nabla_\xi u (x(t),\xi(t)) \\ -1 \end{pmatrix} \left( \xi(t), \left(-\Gamma^{i}_{jk}(x(t)) \xi^j(t) \xi^k(t)\right)_i, \tn(x(t)) \right) \\ 
&=& \xi \cdot \nabla_x u(x(t),\xi(t)) - \Gamma^{i}_{jk}(x(t))\xi^i(t)\xi^j(t) \frac{\partial u(x(t),\xi(t))}{\partial \xi^i} - \tn(x(t)) \\
&=& Hu(x(t),\xi(t)) - \tn(x(t)) \\
&=& 0.
\eean
If we choose a boundary point $x_0 \in \partial M$ and a direction $0\not=\xi_0 \in T_{x_0}M$, we finally obtain the initial value problem
\be
\text{(IVP)} \begin{cases}
       \dot x_i(t) = \xi_i(t) & \text{ for } t \in I,\, i=1,2 \\
       \dot \xi_i(t) = -\Gamma^{i}_{jk}(x(t)) \xi^j(t) \xi^k(t) & \text{ for } t \in I,\, i=1,2 \\
       \dot z(t) = \tn(x(t)) & \text{ for } t \in I \\
       & \\
       x(0) = x_0 & \\
       \xi(0) = \xi_0 & \\
       z(0) = 0 & 
      \end{cases}. \label{gl:3_AWP}
\ee
\newline
The solution is a characteristic, i.e. a curve on $\operatorname{Graph}(u)$, which is the union of all characteristics. But the characteristic curves of $H$ are just the geodesics of $(M,g^{\tn})$. Solving (\ref{gl:3_AWP}) thus follows a geodesic curve starting at the boundary until it meets $\partial M$ again say at $t=t_1$.
Then $z(t_1)$ is the TOF of the ultrasound signal propagating from $x_0$ in direction $\xi_0$ until it leaves the boundary $\partial M$ at $x(t_1)$. 
\newline
%

%%%%%%%%%%%%%%%%%%%%%%%%%%%%%%%%%%%%%%%%%%%%%%%%%%%%%%%%%%%%%%%%%%%%%%%%%%%%%%%%%

\section{A regularization method for (\ref{IP-TOF})}

%%%%%%%%%%%%%%%%%%%%%%%%%%%%%%%%%%%%%%%%%%%%%%%%%%%%%%%%%%%%%%%%%%%%%%%%%%%%%%%%%

We intend to solve the nonlinear inverse problem (\ref{IP-TOF}) iteratively by a steepest descent method for a Tikhonov functional where we use the linearized
operator $R_a$. In a first subsection we define this functional, the iteration scheme and some of its properties. The second subsection then is devoted to deal
with implementation issues of this method.\\

%%%%%%%%

\subsection{The Tikhonov functional and its minimization}

%%%%%%%%

%
Again we assume $M\subset \RR^2$ to be compact. For $1\leq p < \infty$ and $\alpha > 0$ we consider the Tikhonov functional 
$J_\alpha : X_p := L^p (M)\cap C^\infty (M) \to \RR$ defined by
\be\label{def:4_TikhonovFunktional}
   J_\alpha (f) = \frac{1}{2}\| R(f) - u^{meas} \|_{Y}^2 + \frac{\alpha}{p}\| f - 1 \|_{L^p(M)}^p.
\ee
Here we used the notation $Y:=L^2(\partial_+ \Omega M)$. The penalty term $\frac{1}{p}\| f - 1 \|_{L^p(M)}^p$ thereby ensures that
the solution gives a refractive index that does not vary too strongly from $\tn=1$, i.e. a homogeneous medium.
This is a supposed a priori information about the exact solution which we want to incorporate. Furthermore this term of course
yields stability of the solution process. The choice $p=1$ furthermore allows also for sparse solutions. Because of the
nonlinearity we can not expect that there exists a unique minimizer of $J_\alpha (f) \leadsto \min$. Usually
a stationary point $f^*$ of $J_\alpha$ is searched by the condition $0\in \partial J_\alpha (f^*)$. There we face
a further problem: the computation of a G\^{a}teaux derivative $R' (f)$ of $R$. The computation is still object of current
research. This is why we linearize the problem by using $R_a$ for fixed $a\in X_p$ instead of the nonlinear operator $R$.
In this way a second Tikhonov functional $J_\alpha^a : L^p (M)\to \RR$ comes up which is defined by
  \[
   J_\alpha^a( f) = \frac{1}{2}\| R_a(f) - u^{meas} \|_{Y}^2 + \frac{\alpha}{p}\| f - 1 \|_{L^p(M)}^p
  \]
with $a\in C^\infty (M)$ fixed. Since $R_a$ is linear, it is simple to compute the subdifferential $\partial J_\alpha^a$.
The last ingredient we need for doing so is the adjoint $R_a^*$ of $R_a$.\\
\begin{lemma}\label{sat:4_AdjOperator}
 Let $a \in C^\infty (M)$ be fixed. Then the adjoint operator $R^*_a:Y \to L^{p^*} (M)$ is given by
 \be\label{Ra-adjoint}
  R_a^*\omega (y) = \int_{S^1} \omega\big( \Pi (\theta, \Phi(y,\theta) )\big)\, 
                    \Big|\mathrm{det} \frac{\partial}{\partial (\theta, s)} \Pi \big(\theta,\Phi ( y,\theta ) \big) \Big|\,\mathrm{d} \theta\,,\qquad y\in M,
 \ee
 where $\Pi: S^1 \times [-r,r] \to \partial_+ \Omega M$ is the geodesic projection from Definition \ref{def:2_GeodProjektion},
 \[ \mathrm{det} \frac{\partial}{\partial (\theta, s)} \Pi \big(\theta,\Phi ( y,\theta ) \big)  \]
 denotes the Jacobian of $\Pi (\theta, s)$ evaluated at $s=\Phi ( y,\theta )$,
 $r=\mathrm{diam}(M)/2$ and
 $p^*$ satisfies $(p^*)^{-1} + p^{-1} = 1$ for $p\in (1,\infty)$ and $p^*=\infty$ for $p=1$.\\
\end{lemma}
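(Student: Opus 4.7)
The plan is to establish the defining duality $\langle R_a f,\omega\rangle_Y = \int_M f(y)\,R_a^*\omega(y)\,\mathrm{d}y$ for all $f\in L^p(M)\cap C^\infty(M)$ and $\omega\in Y$, with $R_a^*\omega$ defined by the right-hand side of~(\ref{Ra-adjoint}); a H\"older-type estimate will then verify $R_a^*\omega\in L^{p^*}(M)$. The computation is essentially a Fubini argument coupled with two changes of variables: first from boundary coordinates $(x,\xi)\in\partial_+\Omega M$ to the ``line coordinates'' $(\theta,s)\in S^1\times[-r,r]$ via the geodesic projection $\Pi$; and second from $(s,t)$ (transversal phase and parameter along the geodesic) to a point $y\in M$ using the foliation of $M$ by $\{\gamma^a_{\theta,s}\}_s$.

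First I would unfold the pairing and substitute $(x,\xi)=\Pi(\theta,s)$, which is a smooth diffeomorphism by Definition~\ref{def:2_GeodProjektion} under the hypothesis of Lemma~\ref{lem:2_Phasenfunktion}. This produces the Jacobian $|\det\partial_{(\theta,s)}\Pi|$ and rewrites the geodesic as $\gamma^a_{\theta,s}$, giving
\begin{equation*}
\langle R_a f,\omega\rangle_Y = \int_{S^1}\!\int_{-r}^{r}\!\left(\int_{\gamma^a_{\theta,s}}\! f(z)\,\mathrm{d}l(z)\right)\omega(\Pi(\theta,s))\,\bigl|\det\partial_{(\theta,s)}\Pi(\theta,s)\bigr|\,\mathrm{d}s\,\mathrm{d}\theta.
\end{equation*}
Then, for each fixed $\theta$, the non-intersection clause of Lemma~\ref{lem:2_Phasenfunktion} ensures that the $\gamma^a_{\theta,s}$ foliate $M$ and that the matrix $(\partial_t\gamma^a_{\theta,s},\partial_s\gamma^a_{\theta,s})$ is nonsingular, so $(s,t)\mapsto y=\gamma^a_{\theta,s}(t)$ is a diffeomorphism onto $M$ with the identification $\Phi(y,\theta)=s$. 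Fusing $\mathrm{d}l(z)\,\mathrm{d}s$ into $\mathrm{d}y$ under this change of variables (a coarea-type identity) and then applying Fubini to exchange the $y$- and $\theta$-integrations yields
\begin{equation*}
\langle R_a f,\omega\rangle_Y = \int_M f(y)\!\left[\int_{S^1}\!\omega(\Pi(\theta,\Phi(y,\theta)))\bigl|\det\partial_{(\theta,s)}\Pi(\theta,\Phi(y,\theta))\bigr|\,\mathrm{d}\theta\right]\!\mathrm{d}y,
\end{equation*}
which is precisely (\ref{Ra-adjoint}).

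The delicate step is the coarea identity $\mathrm{d}l(z)\,\mathrm{d}s = \mathrm{d}y$: strictly one picks up an extra factor $|\dot\gamma^a_{\theta,s}(t)|/|\det(\partial_t\gamma^a_{\theta,s},\partial_s\gamma^a_{\theta,s})(t,s)|$, and I would need to verify that the parametrization of the geodesics fixed by~(\ref{phase-setting}) together with the normalization of the phase function $\Phi$ in Lemma~\ref{lem:2_Phasenfunktion} makes this factor identically one, so that only $|\det\partial_{(\theta,s)}\Pi|$ survives. The Euclidean case $a=1$, where $\Phi(y,\theta)=\langle y,\theta^\perp\rangle$ and the geodesics are unit-speed straight lines, confirms the cancellation and serves as a useful sanity check; the general case reduces to checking that $\Phi$ is compatible with Euclidean arc length transverse to the leaves. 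Once this is in hand, the final bound $\|R_a^*\omega\|_{L^{p^*}(M)}\le C\|\omega\|_Y$ follows by a direct H\"older application to the integrand in~(\ref{Ra-adjoint}), using compactness of $S^1$ and smoothness of $\Pi$ and $\Phi$ granted by the CDRM assumption on $g^a$.
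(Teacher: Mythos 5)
Your proposal follows essentially the same route as the paper: unfold the $Y$-pairing, change variables from $(x,\xi)$ to $(\theta,s)$ via the geodesic projection $\Pi$ (producing the Jacobian $\big|\mathrm{det}\,\frac{\partial}{\partial(\theta,s)}\Pi\big|$), convert $\mathrm{d}l(z)\,\mathrm{d}s$ into $\mathrm{d}y$, and apply Fubini; the paper encodes that last conversion by writing $\int_{\gamma^a_{\theta,s}}f\,\mathrm{d}l=\int_M f(y)\,\delta\big(s-\Phi(y,\theta)\big)\,\mathrm{d}y$ and then integrating out the delta in $s$. The coarea weight you flag as needing to be identically one is precisely what that delta-function identity silently assumes, so your version is, if anything, more explicit about the single delicate point.
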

\begin{proof}
Let $f \in L^p (M)$ and $\omega \in Y$. Then we have
\bean
\< R_a f(x,\xi), \omega(x,\xi) \> _Y &=& \int_{\partial_+ \Omega M} R_a f(x,\xi) \omega(x,\xi) \, \mathrm{d}x\wedge \mathrm{d}\xi\\
&&\hspace*{-2.5cm} = \int_{\partial_+ \Omega M} \int_{\gamma^a_{x,\xi}} f (z) \omega(x,\xi) \, \mathrm{d}l(z) \,\mathrm{d}x\wedge \mathrm{d}\xi \\
&&\hspace*{-2.5cm} = \int_{S^1} \int_{-r}^r \int_M f (y) \delta \big( s-\Phi (y,\theta ) \big) \omega\big( \Pi(\theta,s) \big) \,
\mathrm{d}y \,\Big|\mathrm{det} \frac{\partial}{\partial (\theta, s)} \Pi (\theta,s) \Big|\,\mathrm{d}s \, d \theta \\
&&\hspace*{-2.5cm} = \int_{M} f(y) \int_{S^1} \int_{-r}^r \omega\big( \Pi(\theta,s) \big) \delta \big( s-\Phi (y,\theta ) \big)\,
\Big|\mathrm{det} \frac{\partial}{\partial (\theta, s)} \Pi (\theta,s) \Big|\,\mathrm{d}s \,\mathrm{d}\theta \, \mathrm{d} y \\
&&\hspace*{-2.5cm} = \int_{M} f (y) \int_{S^1} \omega\big( \Pi(\theta,\Phi (y,\theta)) \big) \,
\Big|\mathrm{det} \frac{\partial}{\partial (\theta, s)} \Pi \big(\theta,\Phi ( y,\theta ) \big) \Big|\,\mathrm{d}\theta \, dy \\
&&\hspace*{-2.5cm} = \< f, R_a^* \omega \> _{L^p \times L^{p^*}}.
\eean
\end{proof}
\\[2mm]
Note that $R_a^*$ is an analogon to the classical backprojection operator, since it integrates along all geodesics passing a given point $y\in M$.
The numerical approximation of the geodesic projection $\Pi(\theta,\Phi (y,\theta))$ will prove the part of our method which is most time consuming.\\
\begin{example}
Consider again the 2D parallel geometry in the unit disk $M=\BB$, equipped with the Euclidean geometry $g_0$. Then the geodesics are straight lines
and the geodesic projection $\Pi : S^1 \times (-1,1) \to \partial_+ \Omega \BB$ is given by
\[   \Pi (\theta,s) = (x,\xi) = ( s \theta + \sqrt{1-s^2} \theta^\perp, \theta^\perp )  , \]
where $\theta=\theta (\varphi) = (\cos\varphi,\sin\varphi)^\top$, $\theta^\perp = \theta(\varphi+\pi/2) = (-\sin\varphi,\cos\varphi)^\top$. In that sense
we can equivalently formulate $\Pi$ as mapping $\Pi : [0,2\pi) \times (-1,1)\to [0,2\pi) \times [0,2\pi)$ by
\[   \Pi (\varphi,s) = \Big( \varphi + \arccos s, \big( \varphi + \frac{\pi}{2} \big)\, \mathrm{mod}\,2\pi \Big)  \]
and obtain
\[
\Big| \mathrm{det} \frac{\partial}{\partial (\theta, s)} \Pi (\theta,s) \Big| = \frac{1}{\sqrt{1-s^2}}.
\]
Since in $(\BB,g_0)$ the phase function is given as $\Phi (y,\theta) = \langle y,\theta^\perp \rangle$, we obtain the usual backprojection operator in $(\BB,g_0)$
\[
R^* \omega (y) = \int_{S^1} \omega \big( \Pi(\theta, \langle y,\theta \rangle ) \big) \,\frac{\mathrm{d}\theta}{\sqrt{1- \langle y,\theta \rangle ^2}}.
\]
Note that in the last identity we substituted $\theta^\perp$ by $\theta$ to stick on the conventional notation. The weight $(1- \langle y,\theta \rangle^2)^{-1/2}$
comes from the fact that the lines $\ell (x,\xi)$ here are parametrized by a boundary point $x\in \partial\BB$ and a unit tangential vector $\xi$ in $x$ with $\langle \xi,x \rangle \geq 0$
instead of the conventional parametrization by a normal vector $\theta$ and offset $s\in\RR$.\\[1ex]
\end{example}
If we would just minimize $J_\alpha^a$ for fixed $a$ we would completely neglect the influence of the integrand $f$ to the integration curves. Thus the idea
of our algorithm is to use a steepest descent method which uses the actual iterate $\tn_{k-1}$ to compute the new one $\tn_k$ by minimizing $J_\alpha^{\tn_{k-1}}$.
This of course means to compute a new set of geodesic curves in each iteration step.
This leads to the following, adaptive iteration scheme.\\
\begin{algorithm}[Iterative, adaptive minimization method] \label{alg:4_IterAdapMin} \newline
\emph{Input:} initial value $\tn_0$, sequence of regularization parameters $\alpha = 
\left(\alpha_k\right)_{k=1,2,\ldots} \subset \RR^+$.%, Abbruchschranke $\varepsilon > 0$
 \begin{itemize}
  \item[(S0)] Compute for $\tn_0$ the set of geodesics $\mG_0 = \mG_{\tn_{0}}$ and set $J_{\alpha_0}^{0}(\tn_{0}) = J_{\alpha_0}^{\tn_{0}}(\tn_{0})$.
  \item[(S1)] Repeat for $k = 1, 2, \ldots$
  \begin{itemize}
   \item[(a)] Minimize $J_\alpha^{n_{k-1}}$, i.e. compute 
       $$\tn_{k} := \arg \min_{f\in L^p (M)} J_{\alpha_{k-1}}^{\tn_{k-1}}(f).$$
   \item[(b)] Compute the new set of geodesics $\mG_k := \mG_{\tn_{k}}$ and evaluate $J_{\alpha_k}^k(\tn_{k}) = J_{\alpha_k}^{\tn_{k}}(\tn_{k})$.
   %\item[(c)] Abbruch, falls $|J_\alpha^{k-1}(\tn_{k-1}) - J_\alpha^{k}(\tn_{k}) | \leq \varepsilon$. \todo{Abbruchkriterium aufweichen}
  \end{itemize}
 \end{itemize}
 \emph{Output:} $\tn_{k^*}$ for a stopping index $k^*$, set of geodesics $\mG_{k^*}$.
\end{algorithm}
\\[2mm]
The minimization step (S1a) is done iteratively using the Landweber method, compare e.g. \cite{skhk}. Let $\tn_k$ be the actual iterate in step (S1), then
this iteration reads as
\bean
 \tn_k^0 &=& \tn_k \\
 \tn_{k}^{l+1} &=& \tn_k^{l} - \mu_l \left( A_k^* (A_k\tn_k^l - u^{meas}) + \frac{\alpha_k}{p} \tn_{k,l}^* \right), \qquad l=0,1,2,\ldots \\
 \tn_{k+1} &=& \tn_k^{l^*}
\eean
with an element $\tn_{k,l}^*\in \partial ( \| \tn_k^l - 1 \|_{L^p(M)}^p )\subset L^{p^*} (M)$, $A_k := R_{\tn_k}$, regularization parameter $\alpha_k > 0$, step size $\mu_l > 0$, $l = 
0,1,\ldots$ and reasonable stopping index $l^*\in\NN$. For $1<p<\infty$ we have
\[
 \partial \left( \| \tn_k^l - 1 \|_{L^2(M)}^p \right) = p\| \tn_k^l - 1 \|_{L^p(M)}^{p-2} \cdot (\tn_k^l-1).
\]
for $\tn_k^l \in L^p (M)$. For $p=1$ we use the standard soft threshold algorithm from \cite{daub}. This yields the following version of step (S1) from Algorithm \ref{alg:4_IterAdapMin}
as it is implemented.\\
\begin{algorithm}[Steepest descent method for $J_{\alpha_k}^{\tn_k}$] \label{alg:4_AbstiegLinTikh} \newline
 \emph{Input:} Current iterate $\tn_k$, regularization parameter $\alpha_k > 0$.%, Abbruchschranke $\delta_k > 0$.
 \begin{itemize}
  \item[(S0)] Define the operator $R_k := R_{\tn_k}$ and its adjoint $R^*_k = R^*_{\tn_k}$ by (\ref{Ra-adjoint}).
  \item[(S1)] Set $\tn_k^0 := \tn_k$ and iterate for $l = 0,1,2, \ldots,l^*$
  \begin{itemize}
   \item[(a)] Compute the residuum $r^{l-1} = R_k \tn_k^{l-1} - u^{meas}$ and stop if $\| r^{l-1} \|_Y = 0$.
   \item[(b)] Evaluate the adjoint operator $r_*^{l-1} = R^*_k r^{l-1}$.
   \item[(c)] Determine a new search direction by
               $$\Delta^{l-1} = r_*^{l-1} + \alpha_k \| \tn_k^{l-1} - 1 \|_{L^p(M)}^{p-2} \cdot (\tn_k^{l-1}-1)$$
   in case $1<p<\infty$ or use soft thresholding for $p=1$.
   \item[(d)] Choose a decent step size $\mu_{l-1} > 0$ and update
   \[
    \tn_k^l := \tn_k^{l-1} - \mu_{l-1} \Delta^{l-1}.
   \]
   \end{itemize}
  \item[(S2)] Set $\tn_{k+1} = \tn_{k}^{l^*}$.
 \end{itemize}
 \emph{Output:} new iterate $\tn_{k+1}$.
\end{algorithm}
\\
\begin{remark} \label{bem:4_Hilfsproblem}
 \begin{itemize}
\item The evaluation of the linearized forward operator $R_k$ in step (S1a) is computed quickly and depends on the discretization of the geodesics and the evaluation of $\tn_k$.
\item The computation of the adjoint in step (S1b) however is very time consuming. The difficulty lies in the interpolation of the geodesics. The challenge is to find the geodesics that pass through a 
point $y \in M$, emitted from a direction $\xi \in T\partial M$ at the boundary.
\item The step size $\mu_{l} > 0$ in step (S1d) can be determined by a line-search algorithm.
\item For convergence of the Landweber method in Algorithm \ref{alg:4_AbstiegLinTikh} to the minimum norm solution we refer to \cite{SCHOEPFER;LOUIS;SCHUSTER:06,skhk}.
 \end{itemize}
\end{remark}
\vspace{2mm}
Our aim is to prove that Algorithms \ref{alg:4_IterAdapMin}, \ref{alg:4_AbstiegLinTikh} generate a sequence of iterates $\{\tn_k\}$ of decreasing values $J_\alpha(\tn_k)$ 
for the Tikhonov functional $J_\alpha$ (\ref{def:4_TikhonovFunktional}). To this end we define the mapping $\mT:L^p (M) \to L^p (M)$ by
\[
 a \mapsto \arg \min_{f\in L^p (M)} J^{a}_\alpha (f) = \arg \min_{f \in L^p (M)} \left\{ \frac{1}{2} \left\| R_{a} f - u^{meas} \right\|_{L^p (M)}^2 + 
 \frac{\alpha}{p} \left\| f - 1 \right\|_{L^p (M)}^p \right\}.
\]
Agorithm \ref{alg:4_IterAdapMin} reads then as
\be\label{gl:4_IterationT}
\tn_{k+1} := \mT(\tn_{k}) = \arg \min_{f \in L^p (M)} J^{\tn_k}_\alpha (f) 
\ee
with given initial value $\tn_0\in L^p (M)$. Assuming that $\mT$ is a contraction, then Banach's fixed point theorem says that $\{\tn_k\}$ converges to a unique $\tn^*$ satisfying
\[
  \tn^* = \mT(\tn^*).
\]
\begin{lemma} \label{kor:4_MinimaleigenschaftFixpunkt}
Let $\mT : L^p (M)\to L^p (M)$ be a contraction and $\tn^*$ be the unique fixed point of $\mT$ in $L^p (M)$. Then $\tn^* = \lim_{k \rightarrow \infty} \tn_k$ and
$$J_\alpha(\tn^*) \leq J_\alpha^{\tn^*}(f)$$
holds true for all $f \in L^p (M)$ and $\alpha>0$.\\[1ex]
\end{lemma}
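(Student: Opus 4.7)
The plan is very short, since both conclusions follow almost directly from what the hypotheses give us. First I would invoke Banach's fixed point theorem: since $\mathcal{T}:L^p(M)\to L^p(M)$ is a contraction on a complete metric space, the Picard iteration $\tn_{k+1}=\mathcal{T}(\tn_k)$ converges to the unique fixed point $\tn^*\in L^p(M)$ regardless of the starting value $\tn_0$. This gives $\tn^*=\lim_{k\to\infty}\tn_k$.

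For the inequality, I would unpack what $\tn^*=\mathcal{T}(\tn^*)$ means by the very definition of $\mathcal{T}$: namely that $\tn^*$ is a minimizer of the linearized Tikhonov functional with fixed geodesic set generated by $\tn^*$ itself, so
\[
J_\alpha^{\tn^*}(\tn^*)\;\leq\; J_\alpha^{\tn^*}(f)\qquad\text{for all }f\in L^p(M).
\]
The remaining step is to identify the left-hand side with $J_\alpha(\tn^*)$. This uses the elementary identity $R_a(a)=R(a)$ noted right after the definition of the linearized ray transform, applied with $a=\tn^*$. Then the data-fit term of $J_\alpha^{\tn^*}$ at $\tn^*$ equals the data-fit term of $J_\alpha$ at $\tn^*$, while the penalty term $\tfrac{\alpha}{p}\|\tn^*-1\|_{L^p(M)}^p$ is the same in both functionals. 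Hence $J_\alpha^{\tn^*}(\tn^*)=J_\alpha(\tn^*)$, and the claim follows.

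There is essentially no obstacle here; the only subtle point to flag is conceptual rather than technical, namely the distinction between $J_\alpha$ (nonlinear) and $J_\alpha^{a}$ (linearized around $a$): the inequality $J_\alpha(\tn^*)\leq J_\alpha^{\tn^*}(f)$ should not be misread as saying $\tn^*$ minimizes $J_\alpha$. It only compares values of the nonlinear functional at $\tn^*$ with values of the \emph{linearization around $\tn^*$} at arbitrary $f$, and the argument exploits exactly that the two functionals agree at the point $\tn^*$ where the linearization is based.
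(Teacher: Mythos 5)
Your proposal is correct and follows essentially the same route as the paper: Banach's fixed point theorem for the convergence claim, the fixed-point/minimizer property of $\mT$ for the inequality $J_\alpha^{\tn^*}(\tn^*)\leq J_\alpha^{\tn^*}(f)$, and the identity $R_{\tn^*}(\tn^*)=R(\tn^*)$ to conclude $J_\alpha^{\tn^*}(\tn^*)=J_\alpha(\tn^*)$. The paper leaves this last identification implicit; your making it explicit is a minor improvement, not a different argument.
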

\begin{proof}
Since
 \[
  \tn^* = \mT(\tn^*) = \arg \min_{f\in L^p (M)} J^{\tn^*}_\alpha(f),
 \]
we have
\[
 J_\alpha (\tn^*) = J^{\tn^*}_\alpha(\tn^*) \leq J^{\tn^*}_\alpha(f)
\]
for all $f \in L^p (M)$.
\end{proof}
\newline
Note that in general we do not have
 \be\label{gl:4_MinimaleigenschaftGeodaeten}
 J_\alpha(\tn^*) \leq J_\alpha^{a}(\tn^*) \text{ for all } a\in X_p
 \ee
but assuming that (\ref{gl:4_MinimaleigenschaftGeodaeten}) holds at least in a neighborhood of $\tn^*$, then Algorithm (\ref{alg:4_IterAdapMin}) in fact generates a decreasing
sequence $J_\alpha (\tn^k)$.\\
\begin{lemma}\label{sat:4_LokaleKonvergenz}
Let $\{ \tn_{k} \}_{k = 0,1,\ldots}$ be the sequence of iterates generated by Algorithm \ref{alg:4_IterAdapMin}. We assume that there exists a $k^* \in \NN$ such that
%  \begin{itemize}
%   \item[i)] $J_\alpha(\tn_k) \leq J_\alpha^{\tn_k}(f)$ for all $f\in L^p (M)$ and
$$J_\alpha(\tn_k) \leq J_\alpha^{a}(\tn_k)\qquad \mbox{for all } a\in X_p$$
whenever $k \geq k^*$. Then for $k\geq k^*$ the Tikhonov functional $J_\alpha (\tn_k)$ is monotonically decreasing, we have
$$J_\alpha(\tn_{k+1}) \leq J_\alpha(\tn_{k}),$$ 
for all $k \geq k^*$.
\end{lemma}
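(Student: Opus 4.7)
The plan is to chain three inequalities linking $J_\alpha$ and $J_\alpha^a$, exploiting the two definitions at hand: the hypothesis comparing $J_\alpha$ and its linearized surrogate $J_\alpha^a$, and the fact that $\tn_{k+1}$ is by construction the minimizer of $J_\alpha^{\tn_k}$.

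First I would record the trivial but crucial identity that for any admissible $f \in X_p$,
\[
J_\alpha(f) \;=\; \tfrac{1}{2}\|R(f) - u^{meas}\|_Y^2 + \tfrac{\alpha}{p}\|f-1\|_{L^p(M)}^p
\;=\; J_\alpha^{f}(f),
\]
since the linearized forward operator $R_a$ coincides with the nonlinear one $R$ precisely when $a = f$, i.e.\ $R_f(f) = R(f)$. Applied to $f = \tn_k$, this gives
\[
J_\alpha(\tn_k) \;=\; J_\alpha^{\tn_k}(\tn_k).
\]

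Next, for $k\geq k^*$ I would fix an arbitrary $k\geq k^*$ and note that $k+1\geq k^*$ as well, so that the hypothesis of the lemma applies to the iterate $\tn_{k+1}$. Choosing $a = \tn_k \in X_p$ in the assumption yields
\[
J_\alpha(\tn_{k+1}) \;\leq\; J_\alpha^{\tn_k}(\tn_{k+1}).
\]
On the other hand, the definition of $\tn_{k+1}$ via Algorithm \ref{alg:4_IterAdapMin}, rewritten as the fixed-point iteration \eqref{gl:4_IterationT}, states that $\tn_{k+1}$ minimizes $J_\alpha^{\tn_k}$ over $L^p(M)$. In particular, testing against $f = \tn_k$ gives
\[
J_\alpha^{\tn_k}(\tn_{k+1}) \;=\; \min_{f\in L^p(M)} J_\alpha^{\tn_k}(f) \;\leq\; J_\alpha^{\tn_k}(\tn_k).
\]

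Combining these three observations yields the chain
\[
J_\alpha(\tn_{k+1}) \;\leq\; J_\alpha^{\tn_k}(\tn_{k+1}) \;\leq\; J_\alpha^{\tn_k}(\tn_k) \;=\; J_\alpha(\tn_k),
\]
which is precisely the asserted monotonicity. The argument is essentially a three-line computation, and the only point that needs care is verifying that the hypothesis of the lemma is indeed available at iteration index $k+1$, i.e.\ that $k \geq k^*$ implies $k+1 \geq k^*$, which is immediate. There is no substantial obstacle here; the content of the lemma lies entirely in the hypothesis, which encodes the (non-obvious) comparison principle \eqref{gl:4_MinimaleigenschaftGeodaeten} between the exact Tikhonov functional and its linearized surrogates in a neighbourhood of the iterates.
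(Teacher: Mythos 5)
Your proof is correct and is essentially identical to the paper's argument: both use the chain $J_\alpha(\tn_{k+1}) \leq J_\alpha^{\tn_k}(\tn_{k+1}) \leq J_\alpha^{\tn_k}(\tn_k) = J_\alpha(\tn_k)$, where the first inequality comes from the hypothesis applied at index $k+1$ with $a=\tn_k$, the second from the minimizing property of $\tn_{k+1}$, and the equality from $R_{\tn_k}(\tn_k)=R(\tn_k)$. Your explicit remark that $k\geq k^*$ implies $k+1\geq k^*$ is a small point of care the paper leaves implicit.
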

\begin{proof}
 According to the assumptions for $k \geq k^*$ we have
 \[
  J_\alpha (\tn_{k+1}) \leq J_\alpha (\tn_{k+1}) + \left( J_\alpha^{\tn_k}(\tn_{k+1}) - J_\alpha (\tn_{k+1}) \right) 
  = J_\alpha^{\tn_k} (\tn_{k+1}) \leq J_\alpha^{\tn_k} (\tn_k) = J_\alpha (\tn_k),
 \]
because $\tn_{k+1}$ is a minimizer of $J_\alpha^{\tn_k}$.\\[1ex]
% We have for the surrogate functional $\tilde J_\alpha$ the estimation
% \[
%  \tilde J_\alpha (\tn_{k+1}, \tn_{k+2}) \leq J_\alpha(\tn_{k+1}) \leq J_\alpha(\tn_{k+1}) + \left( J_\alpha^{\tn_k}(\tn_{k+1}) - J_\alpha (\tn_{k+1}) \right) = \tilde J_\alpha(\tn_{k},\tn_{k+1})
% \]
%because of $J_\alpha^{\tn_k} (\tn_{k+1}) - J_\alpha (\tn_{k+1}) \geq 0$.
\end{proof}
\\[2mm]
Summarizing we found a criterion for the iteration sequence $\{\tn_k\}$ to have a weak limit point.\\
\begin{theorem}
Adopt the assumptions of Lemma \ref{sat:4_LokaleKonvergenz}. Then the sequence of iterates $\{\tn_k\}$ generated by Algorithm \ref{alg:4_IterAdapMin}
has a weakly convergent subsequence, if $1<p<\infty$.\\
\end{theorem}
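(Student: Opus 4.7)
The plan is to extract weak compactness from boundedness in the reflexive space $L^p(M)$ for $1<p<\infty$, where the boundedness comes directly from the monotonicity of the Tikhonov functional established in Lemma \ref{sat:4_LokaleKonvergenz}.

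First I would invoke Lemma \ref{sat:4_LokaleKonvergenz}: under the stated hypotheses, $J_\alpha(\tn_k)$ is monotonically non-increasing for $k\geq k^*$, so
\[
J_\alpha(\tn_k)\leq J_\alpha(\tn_{k^*})=:C<\infty \quad\text{for all } k\geq k^*.
\]
Since both summands of $J_\alpha$ are nonnegative, I would then simply drop the data-fit term to obtain
\[
\frac{\alpha}{p}\,\|\tn_k-1\|_{L^p(M)}^p \;\leq\; J_\alpha(\tn_k)\;\leq\; C,
\]
which, after adding the constant function $1\in L^p(M)$ (valid because $M$ is bounded), yields a uniform $L^p$-bound for the full sequence $\{\tn_k\}$ by the triangle inequality. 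The finitely many early iterates with $k<k^*$ do not affect boundedness.

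Finally, since $1<p<\infty$, the Lebesgue space $L^p(M)$ is reflexive, so by the Banach--Alaoglu / Kakutani theorem every norm-bounded sequence admits a weakly convergent subsequence in $L^p(M)$. Applying this to $\{\tn_k\}$ produces the desired weakly convergent subsequence, completing the proof.

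There is essentially no hard step here; the only subtlety is that the monotonicity in Lemma \ref{sat:4_LokaleKonvergenz} only holds from $k^*$ onward, but since this excludes only finitely many terms the boundedness of the tail trivially extends to the whole sequence. Note also that the statement gives only \emph{weak} subsequential convergence, not convergence to a minimizer of $J_\alpha$; obtaining the latter would require additionally the weak lower semicontinuity of $f\mapsto \|R(f)-u^{meas}\|_Y^2$, which is not available in the nonlinear setting treated here and so is deliberately not claimed.
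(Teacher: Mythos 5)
Your proposal is correct and follows essentially the same route as the paper: monotonicity of $J_\alpha(\tn_k)$ from Lemma \ref{sat:4_LokaleKonvergenz} gives boundedness of the functional values, dropping the data-fit term bounds $\|\tn_k-1\|_{L^p(M)}$, and reflexivity of $L^p(M)$ for $1<p<\infty$ yields a weakly convergent subsequence. Your added care about the finitely many iterates with $k<k^*$ and about passing from $\tn_k-1$ to $\tn_k$ is a minor tightening of details the paper leaves implicit.
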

\begin{proof}
Lemma \ref{sat:4_LokaleKonvergenz} shows that the sequence $J_\alpha (\tn_k)$ is monotonically decreasing and hence converging, since it is bounded from below.
Thus $\{J_\alpha (\tn_k)\}$ is bounded. Because
\[   \frac{\alpha}{p}\|\tn_k - 1\|_{L^p (M)}^p \leq J_\alpha (\tn_k) , \]
the sequence $\{\tn_k\}$ is bounded, too. The spaces $L^p (M)$ are reflexive for $1<p<\infty$ which implies that $\{\tn_k\}$ has a weakly convergent subsequence.\\
\end{proof}
\subsection{Implementation of Algorithms \ref{alg:4_IterAdapMin}, \ref{alg:4_AbstiegLinTikh}}

In this subsection we address issues of the implementation of the  Algorithms \ref{alg:4_IterAdapMin} and \ref{alg:4_AbstiegLinTikh}.
For simplicity we set $M:=\BB$, the closed unit disk in $\RR^2$.\\[1ex]
At first we describe the discretization of $M$ and $\tn$. Let $\tn\in L^p (\RR^2)\cap C^\infty (\RR^2)$. For a given discretization step size $h > 0$ 
we define the equally spaced grid
 \be\label{def:4_Diskretisierung}
  E^h := \left\{ (r_i, s_j) \in \RR^2 : r_i = ih, \, s_j = jh, \, \forall i,j \in \ZZ \right\} \subset \RR^2.
 \ee
 and $e_{ij}^h \subset \RR^2$ by
 \[
  e_{ij}^h := \left\{ (r,s) \in \RR^2 : r_i \leq r < r_{i+1}, \, s_j \leq s < s_{j+1}, \, (r_i,s_j), (r_{i+1}, s_{j+1}) \in E^h  \right\}
 \]
for $i,j \in \ZZ$. On $E^h$ we define the grid values $F^h(\tn)$ of $\tn$ by
\[
 F^h (\tn ) := \left\{ \tn_{ij} = \tn(r_i,s_j) : (r_i,s_j) \in E^h \right\}
\]
and bilinear functions $\varphi_{ij}^{\tn}:\RR^2 \to \RR$ by
\bean
 \varphi_{ij}^{\tn}(r,s) &=&\\
&&\hspace*{-2cm}                    \begin{cases}
                      F^h_{ij}(\tn) + \frac{r-r_i}{h}\left( F^h_{i+1,j}(\tn) - F^h_{i,j}(\tn) \right)
                      + \frac{s-s_j}{h} \left( F^h_{i,j+1}(\tn) - F^h_{i,j}(\tn) \right)  & \\
                      \, + \frac{(r-r_i)(s-s_j)}{h^2} \left( F^h_{i+1,j+1}(\tn) - F^h_{i+1,j}(\tn) - F^h_{i,j+1}(\tn) + F^h_{i,j}(\tn) \right) & \text{ for } (r,s) \in e_{ij}^h \\
                      0 & \text{ otherwise}
                     \end{cases}.
\eean
The unique bilinear interpolate $\tn^h:\RR^2 \to \RR$ of $\tn$ is then given by
\[
 \tn^h (r,s) := \sum_{i,j \in \ZZ} \varphi_{ij}^{\tn}(r,s).
\]
The set of all piecewise bilinear interpolates is denoted by
\[
 V^h := \Big\{ \tn^h (r,s) := \sum_{i,j \in \ZZ} \varphi_{ij}^{\tn}(r,s): \tn \in X_p \Big\}.
\]
The following properties of $\tn$ and its interpolate $\tn^h$ are obvious:\\[1ex]
\begin{itemize}
  \item[a)] $\tn(r_i,s_j) = \tn^h(r_i,s_j)$ for all $(r_i,s_j) \in E^h$\\
  \item[b)] $\tn^h$ ist continuous on every element $e_{ij}^h$ and differentiable in the interior of $e_{ij}^h$\\
\end{itemize}
\begin{remark}
For numerical purposes we cover the unit disk $M$ by a subset of the grid $\tilde E^h := E^h \cap [-1,1] \times [-1,1]$ and set $h=1/Q$ for an integer $Q\in\NN$. For the number of grid points we have $|\tE^h | = (2Q+1)^2$ and 
$$| M \cap \tE^h | = \left| \left\{ (r_i,s_j) \in \tE^h : 
\|(r_i,s_j)\| \leq 1 \right\} \right| \approx \frac{\pi}{4} |\tE^h | = \frac{\pi}{4}(2Q + 1)^2$$ 
such that number of degrees of freedom increases like $\mathcal{O}(Q^2)$.
%   \item Es sind durchaus auch andere Diskretisierungen und Ansatzfunktionen denkbar. Beispielsweise könnte man das Gebiet durch Dreieckselemente diskretisieren und darauf Hütchenfunktionen 
% definieren. Jedoch hat die obige Diskretisierung einen entscheidenden Vorteil. Zu einem gegebenen Punkt $(r,s) \in M$ ist es sehr einfach zu entscheiden in welchem Element $e^h_{ij}$ dieser liegt. 
% Ein Nachteil ist, dass komplizierte Geometrien nicht so gut approximiert werden können.
Note that in view of (\ref{gl:2_GeodDGL}) it is important that $\tn^h$ is differentiable on $M$. This is achieved by extending the gradient of $\tn^h$ continuously to the edges of $e_{ij}^h$.\\
\end{remark}
The next step is the discretization of the measure data $u^{meas}$. In practical applications we of course have only a finite number of TOF measurements. That means the ray transform
$R(\tn)(x,\xi)$ is given for a finite number of pairs $(x,\xi)\in P\subset \partial_+ \Omega M$. This is why we define finite sets
\[
 X^{N_x} := \left\{ x_i = \begin{pmatrix}\sin(2\pi\frac{i-1}{N_x}) \\
			  \cos(2\pi\frac{i-1}{N_x})\end{pmatrix} : i = 1, \ldots ,N_x \right\}
\]
of source points $x_i\in \partial M$ and ray directions
\[
 \Xi_i^{N_\xi} := \left\{ \xi_i^j = \begin{pmatrix}\sin\left(2\pi\left(\frac{i-1}{N_x} + \frac{j-1}{N_\xi} - \frac{1}{4}\right)\right) \\ 
			      \cos\left(2\pi\left(\frac{i-1}{N_x} + \frac{j-1}{N_\xi} - \frac{1}{4}\right)\right)\end{pmatrix}:
j=1,\ldots ,N_\xi \right\}
\]
for $i = 1, \ldots, N_x$. Hence any geodesic curve $\gamma_{x_i,\xi_i^j}$ for which we acquire measure data is characterized by an element $(x_i,\xi_i^j)$ of the set
\[
 P := P^N := \left\{ (x_i,\xi_i^j): x_i \in X^{N_x}, \xi_i^j \in \Xi_i^{N_\xi},\: i=1, \ldots, N_x,\: j=1,\ldots,N_\xi \right\}
\]
where $N:=(N_x,N_\xi)$. For the implementation of Algorithm \ref{alg:4_IterAdapMin} we use then the discrete Tikhonov functional $J_\alpha^{a,h} : V^h \to \RR$
\[
J_\alpha^{a,h} (\tn^h) := \frac{1}{2} \sum_{(x_i,\xi_i^j) \in P^N} \left| R_a (\tn^h)(x_i,\xi_i^j) - u^{meas}(x_i,\xi_i^j) \right|^2 + 
\frac{\alpha}{p} \sum_{(r_i,s_j) \in \tE^h}| \tn^h(r_i,s_j) - 1 |^p
\]
The according set of geodesic curves is given by
\[
\Gamma_N^h := \big\{ \gamma_{x_i,\xi_i^j} : \gamma_{x_i,\xi_i^j} \text{ solves \eqref{gl:3_AWP}, }  x_i \in X^{N_x}, \xi_i^j \in \Xi_i^{N_\xi}, i=1, \ldots, N_x, j=1,\ldots,N_\xi \big\}
\]
and the initial value problem \eqref{gl:3_AWP} is solved by a Runge-Kutta method with stepsize control.\\
The next important ingredient is the implementation of the backprojection operator $R_a^*$ for given $a\in X_p$. We recall that
\[
R_a^* \omega (y) = \int_{S^1} \omega\big( \Pi (\theta, \Phi(y,\theta) )\big)\, 
                    \Big|\mathrm{det} \frac{\partial}{\partial (\theta, s)} \Pi \big(\theta,\Phi ( y,\theta ) \big) \Big|\,\mathrm{d} \theta\,,\qquad y\in M
\]
Our implementation is similar to that of the backprojection operator $R^*$ in standard 2D computerized tomography, but there are two crucial issues to address:\\
\begin{itemize}
  \item[1.] The determinant
  \[  \Big|\mathrm{det} \frac{\partial}{\partial (\theta, s)} \Pi \big(\theta,\Phi ( y,\theta ) \big) \Big|  \]
  can not be computed, since the geodesic projection $\Pi$ as well as the phase function $\Phi$ are not explicitly known. Two possible substitutions are to set the determinant equal to $1$
  or to use the expression for the Euclidean geometry
  \[  \Big|\mathrm{det} \frac{\partial}{\partial (\theta, s)} \Pi \big(\theta,\Phi ( y,\theta ) \big) \Big| = \Big| \frac{1}{\sqrt{1- \langle y,\theta^\perp \rangle^2}} \Big|   \]
  what we have done in our computations. This perfectly fits to our a priori assumption that $\tn$ varies only slightly from $1$.\\
  \item[2.] In standard CT for a given point $y\in M$ it is easy to compute the corresponding boundary point $x\in\partial M$ such that a line, outgoing from $x$ in direction of $\theta$
  meets $y$. For $\tn\not=1$ the determination of $(x,\xi)\in \partial_+ \Omega M$ such that the geodesic $\gamma_{x,\xi}$ meets a given $y\in M$ is a difficult task.\\
\end{itemize}
We developed the following algorithm to compute the backprojection operator $R_{\tn_k}^*$ for given $k$-th iterate $\tn_k$ in Algoritm \ref{alg:4_IterAdapMin} as it was used in our implementation.
The unit vectors $\theta\in S^1$ are discretized by
\[   \theta_m = (\cos\varphi_m, \sin\varphi_m)^\top,\quad \varphi_m=\frac{m-1}{N_\theta}2\pi,\quad m=1,\ldots,N_\theta   \]
for given $N_\theta\in\NN$.\\
\begin{algorithm}[Computation of the adjoint operator $R_{\tn_k}^* \omega$] \label{alg:4_AdjOperator} \newline
 \emph{Input:} Set of geodesic curves $\Gamma_{N}^h$ associated with $\tn_k$ and the corresponding TOF measuremensts $\omega (x_i,\xi_i^j)\in \RR^{N_x+N_\xi}$
               with $\gamma_{x_i,\xi_i^j}\in \Gamma_N^h$, $i=1,\ldots,N_x$, $j=1,\ldots,N_\xi$, reconstruction point $y\in M$.
 \begin{itemize}
  \item[(S0)] Set $0 = w \in \RR^{N_\theta}$.
  \item[(S1)] Iterate for $m = 1,2, \ldots, N_\theta$
  \begin{itemize}
   \item[(a)] Compute $\theta_m$ and $(\theta_m)^\perp$.
   \item[(b)] Project the point $y$ in direction $\theta_m$ on the boundary point $x_e \in \partial M$ and determine $x_i \in X^{N_x}$, which is the nearest neighbor to $x_e$.
   \item[(c)] Determine the unique $j^*\in \{1,\ldots,N_\xi\}$ such that
              \[  \mathrm{dist} \big( y,\mathrm{Tr} (\gamma_{x_i,\xi^{j^*}_i}) \big) \]
              is minimal.
   \item[(d)] If $y \in \gamma_{x_i,\xi^{j^*}_i}$, set $w_m = \omega(x_i,\xi^{j^*}_i)$ and go to step (S1).
   \item[(e)] If $y \not\in \gamma_{x_i,\xi^{j^*}_i}$, then determine $\tau_i^{j^*}\in \RR$ such that $\gamma_{x_i,\xi_i^{j^*}}(t)=y+\tau_i^{j^*} \theta_m^\perp$ for (unique) $t\in \RR$.
   \item[(f)] If $\tau_i^{j^*} < 0$, then
   \begin{itemize}
    \item[i)] check if $\tau_{i+1}^{j^*}<0$. If yes then set $i = i+1$ and go to step (S1f).
    \item[ii)] Otherwise interpolate $w_m$ linearly between $\omega(x_{i},\xi^{j^*}_i)$ and $\omega(x_{i+1},\xi^{j^*}_{i+1})$ and go to step (S1).
   \end{itemize}
   \item[(g)] If $\tau_i^{j^*}>0$, then
   \begin{itemize}
    \item[i)] check if $\tau_{i-1}^{j^*}>0$. If yes then set $i = i-1$ and go to step (S1g).
    \item[ii)] Otherwise interpolate $w_m$ linearly between $\omega(x_{i},\xi^{j^*}_i)$ and $\omega(x_{i-1},\xi^{j^*}_{i-1})$ and go to step (S1).
   \end{itemize}
  \end{itemize}
  \item[(S2)] Approximate the backprojection by
          $$R^*_{\tn_k} (y) \approx \frac{2 \pi}{N_\theta} \sum_{m=1}^{N_\theta} w_m\,\frac{1}{\sqrt{1- \langle y,\theta_m^\perp \rangle ^2}}.$$
 \end{itemize}
 \emph{Output:} Backprojection $R^*_{\tn_k}(y)$.\\[1ex]
\end{algorithm}
The algorithm works as follows: At first we project $y$ to the boundary in direction $\theta_m$ in the Euclidean sense. This gives $x_e\in\partial M$ (S1b). Next we choose
from evey geodesic $\gamma_{x_i,\xi_i^j}$ starting in $x_i\in\partial M$ that one whose trace has minimal distance from $y$. The corresponding index of the tangent is denoted by $j^*$ (S1c).
The next step consists of computing the intersection point of $\gamma_{x_i,\xi_i^{j^*}}$ with the line $y+\tau \theta_j^\perp$ yielding $\tau_i^{j^*}$. If the geodesic runs to the
'left' of $y$ (in the sense of $\theta_m^\perp$), i.e. $\tau_i^{j^*} < 0$, we increment $i$ and repeat the procedure (S1fi). If the geodesics $\gamma_{x_i,\xi_i^{j^*}}$,
$\gamma_{x_{i+1},\xi_{i+1}^{j^*}}$ are located on different 'sides' of $y$ (in the sense of $\theta_m^\perp$) we use linear interpolation to compute $w_m$ (S1fii). If $\tau_i^{j^*} > 0$
we proceed in the same way (S1g). Finally the backprojection $R_{\tn_k} \omega (y)$ is computed using the trapezoidal sum. Please note that both concepts, the usage of the trapezoidal
sum with respect to the projections $\theta_m$ as well as linear interpolation, are adopted from the standard backprojection step for FBP algorithm in 2D computerized tomography,
see e.g. \cite{NATTERER:86}. Algorithm \ref{alg:4_AdjOperator} is emphasized in Figure \ref{fig:4_AdjOperator}.\\
\begin{remark}
 \begin{itemize}
  \item If a point $y\in M$ is located close to the boundary $\partial M$, then it is possible that there are no neighboring geodesics $\gamma_{x_i\xi_i^j}$. In this case we interpolate to zero.
  \item It is importand to fix what runs 'left' or 'right' of $y$ means. Our choice is with respect to the line $y + \spann (\theta_m^\perp)$ but is not necessarily the best choice.
  \item When incrementing or decrementing the index $i$ we have to compute modulo $N_x$.
  \item The interpolation of $w_m$ can be done in several ways. Here we interpolate linearly with respect to the distance of $y$ to the intersection points $\gamma_{x_{i},\xi^{j^*}} \cap (y + \spann 
(\theta_m^\perp))$ and $\gamma_{x_{i\pm 1},\xi^{j^*}} \cap (y + \spann (\theta_m^\perp))$.
 \end{itemize}
\end{remark}
\begin{figure}[H]
 \centering
 \includegraphics[width=.5\textwidth]{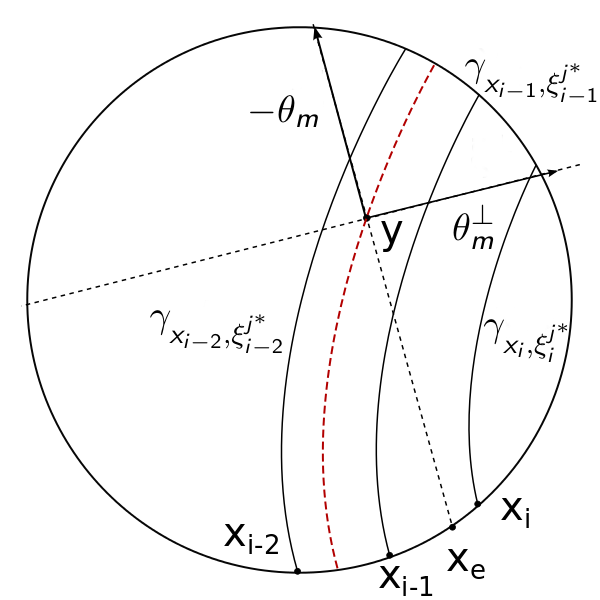}
 \caption{Scheme of Algorithm \ref{alg:4_AdjOperator}. The red dotted line is the geodesic curve, which runs exactly throug $y$ bit is not in the measured data $u^{meas} (x_i,\xi_i^j)$,
$(x_i,\xi_i^j)\in P^N$.}
 \label{fig:4_AdjOperator}
\end{figure}
%

%%%%%%%%%%%%%%%%%%%%%%%%%%%%%%%%%%%%%%%%%%%%%%%%%%%%%%%%%%%%%%%%%%%%%%%%%%%%%%%%%%%%%%%%%%%%%%%%%%%%%%%%%%%

\section{Numerical experiments}

%%%%%%%%%%%%%%%%%%%%%%%%%%%%%%%%%%%%%%%%%%%%%%%%%%%%%%%%%%%%%%%%%%%%%%%%%%%%%%%%%%%%%%%%%%%%%%%%%%%%%%%%%%%

In this section we demonstrate the performance of our method on the basis of several test examples using synthetic data. We implemented Algorithms \ref{alg:4_IterAdapMin}, \ref{alg:4_AbstiegLinTikh}
and \ref{alg:4_AdjOperator} as shown in Section 3. To solve the forward operator, i.e. to compute the ray transform $R_a \tn$ for given $a$ and $\tn$ we applied 
an extrapolation step control method based on the classical 
Runge-Kutta method of 4th order. Thereby the tolerance parameter was chosen as $\rho=0.9$, the initial stepsize as $h_0=0.025$ and the stopping index was $\varepsilon = 10^{-6}$.
%All computation were performed using an Intel Xeon E5-2650 with 2.0 GHz and 384 GB RAM. 
Since the geodesic curves $\gamma_{x_i,\xi_i^j}$ can be computed independently from each other,
the computation of $R_a \tn$ as well as of the synthetic TOF data $u^{meas} = R(\tn)$ can be parallelized what we have done using 30 cores. The objective functional
\[
J_\alpha^{\tn_k,h} (\tn^h) = \frac{1}{2} \sum_{(x_i,\xi_i^j) \in P^N} \left| R_{\tn_k^h} (\tn^h)(x_i,\xi_i^j) - u^{meas}(x_i,\xi_i^j) \right|^2 + 
\frac{\alpha}{p} \sum_{(r_i,s_j) \in \tE^h}| \tn^k(r_i,s_j) - 1 |^p
\]
is minimized subject of $\tn^h\in V^h$ in each iteration step using the steepest descent method given in Algorithm \ref{alg:4_AbstiegLinTikh}.

\subsection{Sound speed with 'peaks'}
\label{c-peaks}

In the first example we investigate the behavior of the reconstruction results with respect to a varying regularization parameter $\alpha > 0$ and for fixed norm $p=2$. 
In every step we chose $\alpha_k = \overline \alpha$ as a constant. The exact refractive index $\tn(x)=1/c(x)$ is given by the sound speed ``peaks'' as
\be\label{sound-peaks}
  c(x) = 1/\tn(x) = 1 + \sum_{i=1}^3\varphi_i(x)\chi_i(x) 
 \ee
with
 \[
  \varphi_i(x) = \vartheta_i e^{-\frac{1}{r_i - \|x - q_i \|}}
 \]
and 
 \[
  \chi_i(x) = \begin{cases}
               1 &\text{if } \|x - q_i\| \leq r_i \\
               0 &\text{otherwise}
              \end{cases}
 \]
for given center points $q_i \in M$, radii $r_i > 0$ and amplitudes $\vartheta_i \in (-1, \infty)$, $i=1,2,3$. In our experiments we set
\bean
 q_1 = \begin{pmatrix}\frac{1}{5} \\ \frac{2}{5} \end{pmatrix}, \, q_2 = \begin{pmatrix}-\frac{1}{3} \\ -\frac{1}{3} \end{pmatrix}, \, q_3 = \begin{pmatrix}\frac{1}{2} \\ -\frac{1}{2} \end{pmatrix}
\eean
for the center points,
\bean
r_1 = \frac{1}{4},\, r_2 = \frac{1}{5},\, r_3 = \frac{1}{6}
\eean
for the radii and
\bean
\vartheta_1 = \frac{1}{5},\, \vartheta_2 = -\frac{3}{20},\, \vartheta_3 = \frac{1}{10}
\eean
for the amplitudes. In each iteration step we chose $N_x = 100$ (number of detectors) and $N_\xi = 100$ (number of signals per detector), such that in every step we have to compute
$\left|\Gamma_N^h \right| = 10000$ geodesics. The computation of a full set of geodesics $\Gamma_N^h$ lasts about 30 seconds. 
The evaluation of the adjoint $R^*_{\tn_k^h}$ lasts about one minute.

\subsubsection{Experiments with respect to the regularization parameter}

We compared results for regularization parameters $\alpha_i = 0.3 + 0.6i$, $i = 1, \ldots , 5$. As iteration step size in Algorithm \ref{alg:4_AbstiegLinTikh}  we used $\mu_l = 0.01$. 
The unit square was discretized using the step size $h=0.1$ yielding $346$ reconstruction points.\\
Figure \ref{fig:4_Ex1ZF} shows the values $J^{\tn_k,N}_\alpha(\tn_{k+1})$ for different regularization parameters $\alpha_i$. One can see that all functional values at first increase and then 
decrease (except at the first one) to a minimum which is less than the initial value. On the basis of these curves one is able to determine \emph{optimal stopping indices} $k^*_i\in\NN$, $i=1,\ldots,5$,
where
\[
 k^*_1 = 216,\, k^*_2 = 246,\, k^*_3 = 240,\, k^*_4 = 147 \text{ and } k^*_5 = 303.
\]
Figure \ref{fig:4_Ex1SGV} shows $c_{k^*_i}=1/\tn_{k^*_i}$ for $i=1,\ldots,5$ along with the exact solution $c(x)=1/\tn(x)$. The errors $c(x)-c_{k^*_i}(x)$ are plotted in
Figure \ref{fig:4_Ex1SGVDiff}. One realizes that for smaller values of $\alpha$ there are higher fluctuations in the reconstructions. For all reconstructions the peaks are well detected, but 
their quantity is smaller if $\alpha$ increases. 
For example in the peak $q_1=(0.2,0.4)$ we have $c(0.2,0.4) = 1.2$  and the reconstruction with $\alpha_1=0.3$ gives $c_1(0.2,0.4) \approx 1.15$, but for $\alpha_4=2.7$ we have 
$c_5(0.2,0.4) \approx 1.08$ (compare Figure \ref{fig:4_Ex1SGS}). A bigger $\alpha$ leads to a higher weighting of the regularization term which penalizes aberrations from $1$.
The weak amplitude at $q_2=(-\frac{1}{3},-\frac{1}{3})$ is only detected if $\alpha$ is big enough. Otherwise the peak can not be recognized because of the high fluctuations.

\begin{figure}[H]
\centering
\includegraphics[width=.45\textwidth]{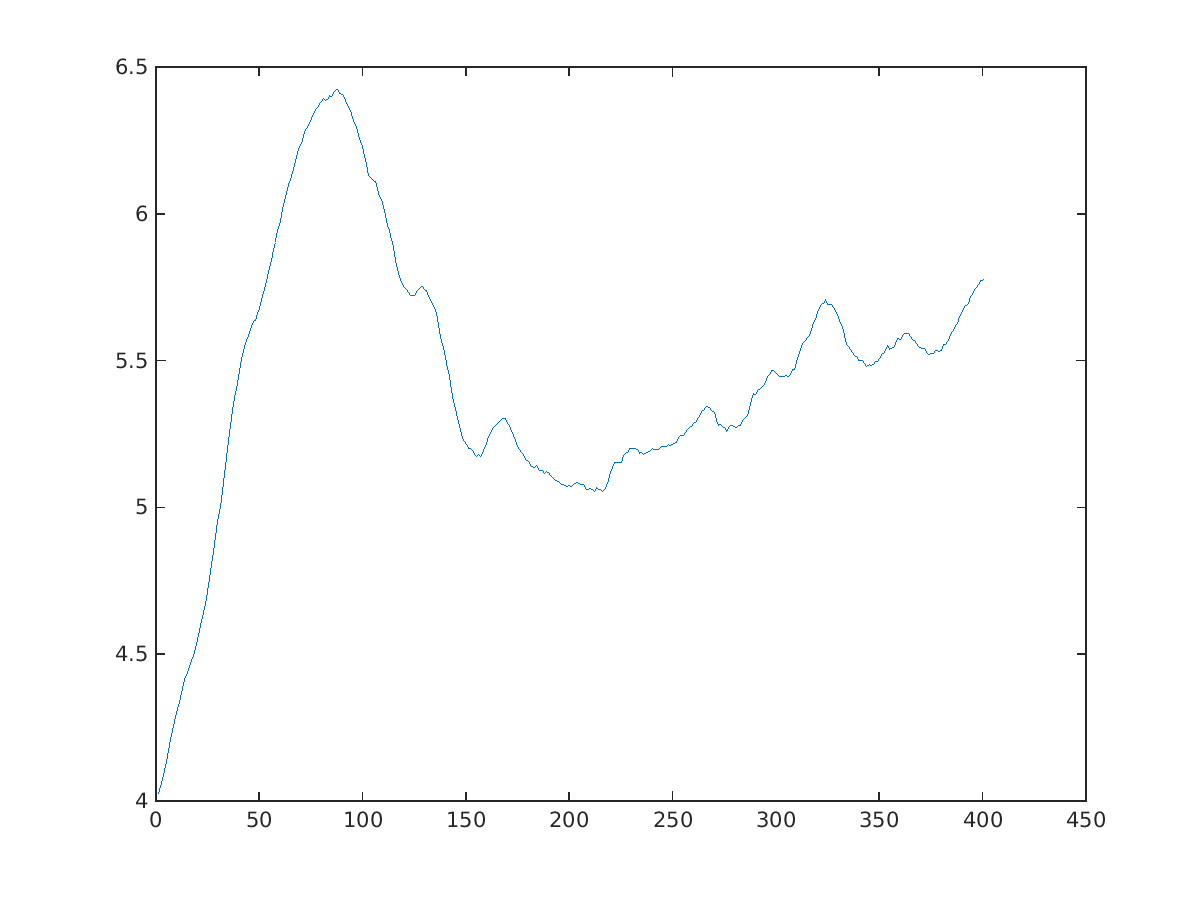}
\includegraphics[width=.45\textwidth]{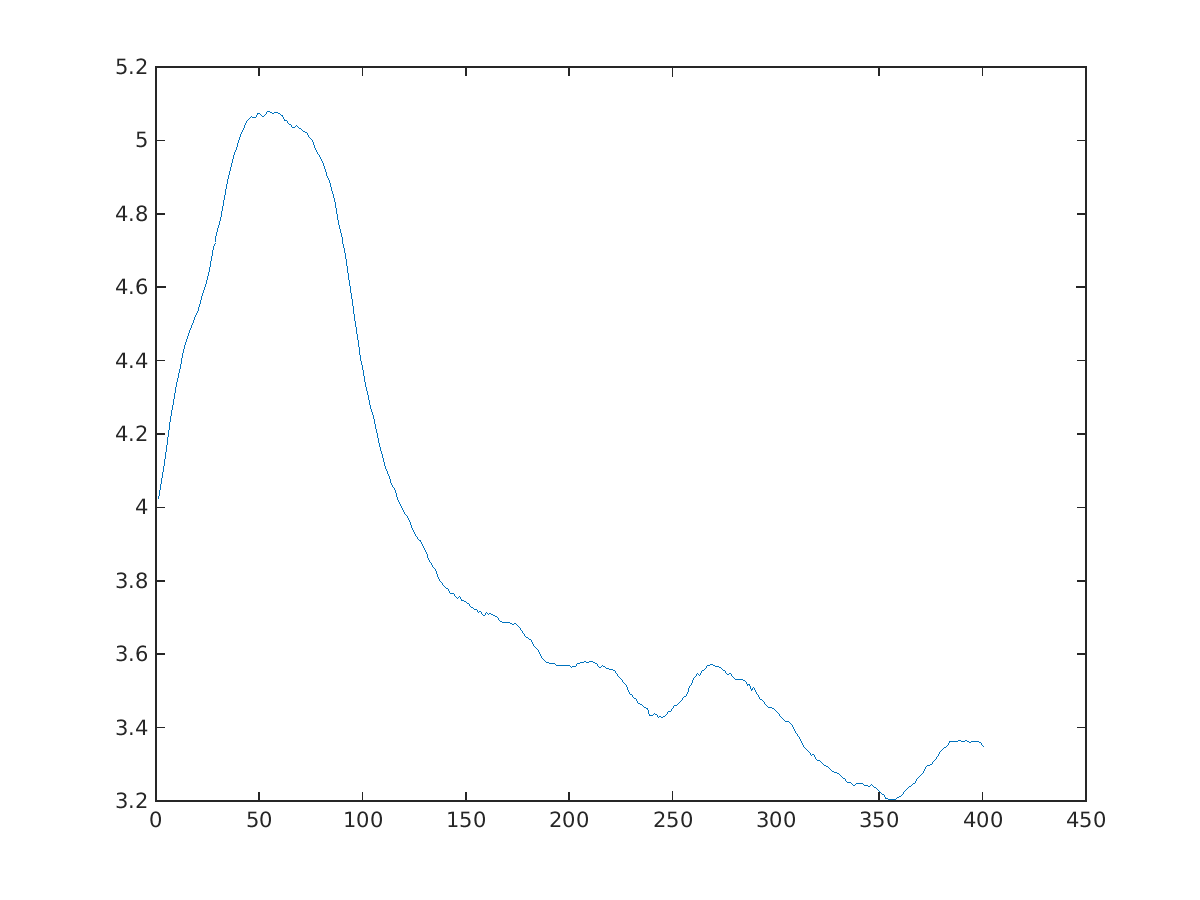} \\
\includegraphics[width=.45\textwidth]{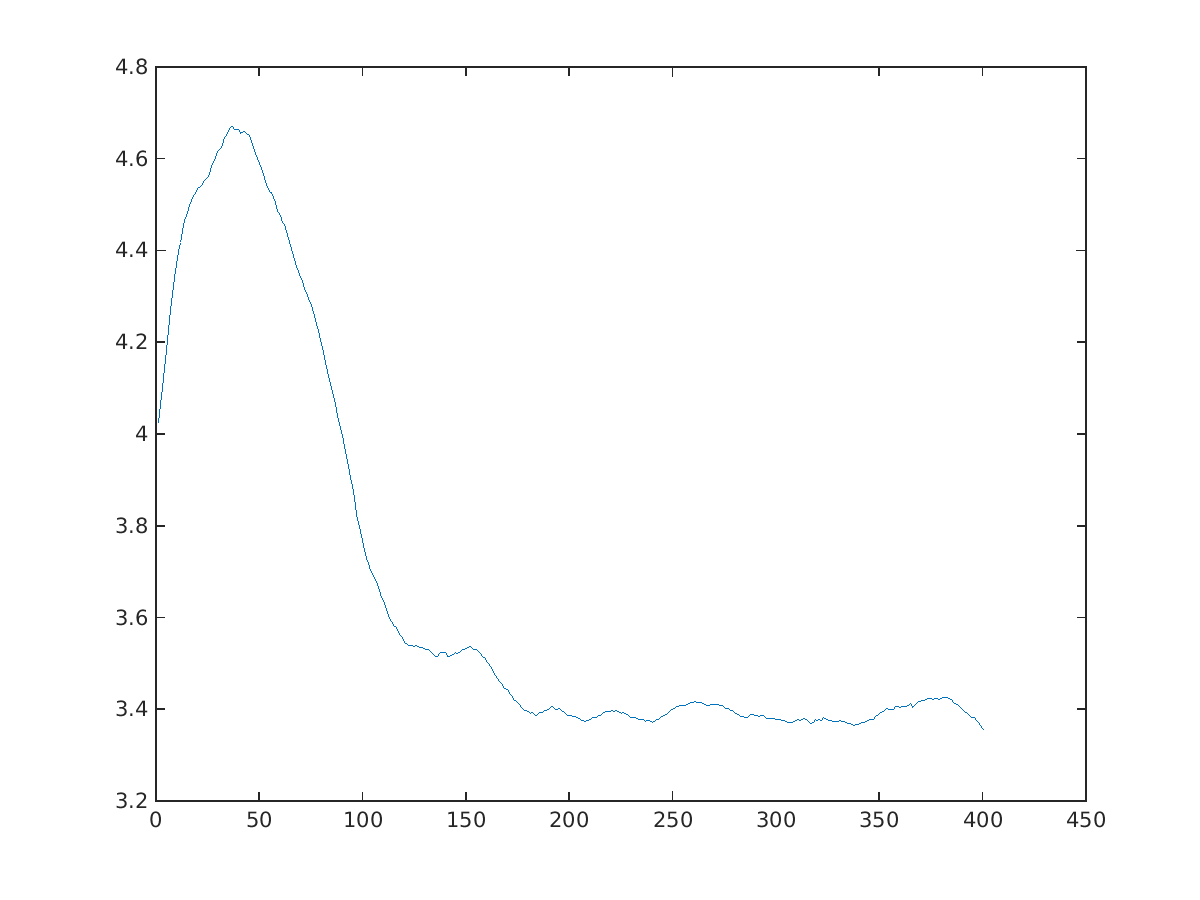} 
\includegraphics[width=.45\textwidth]{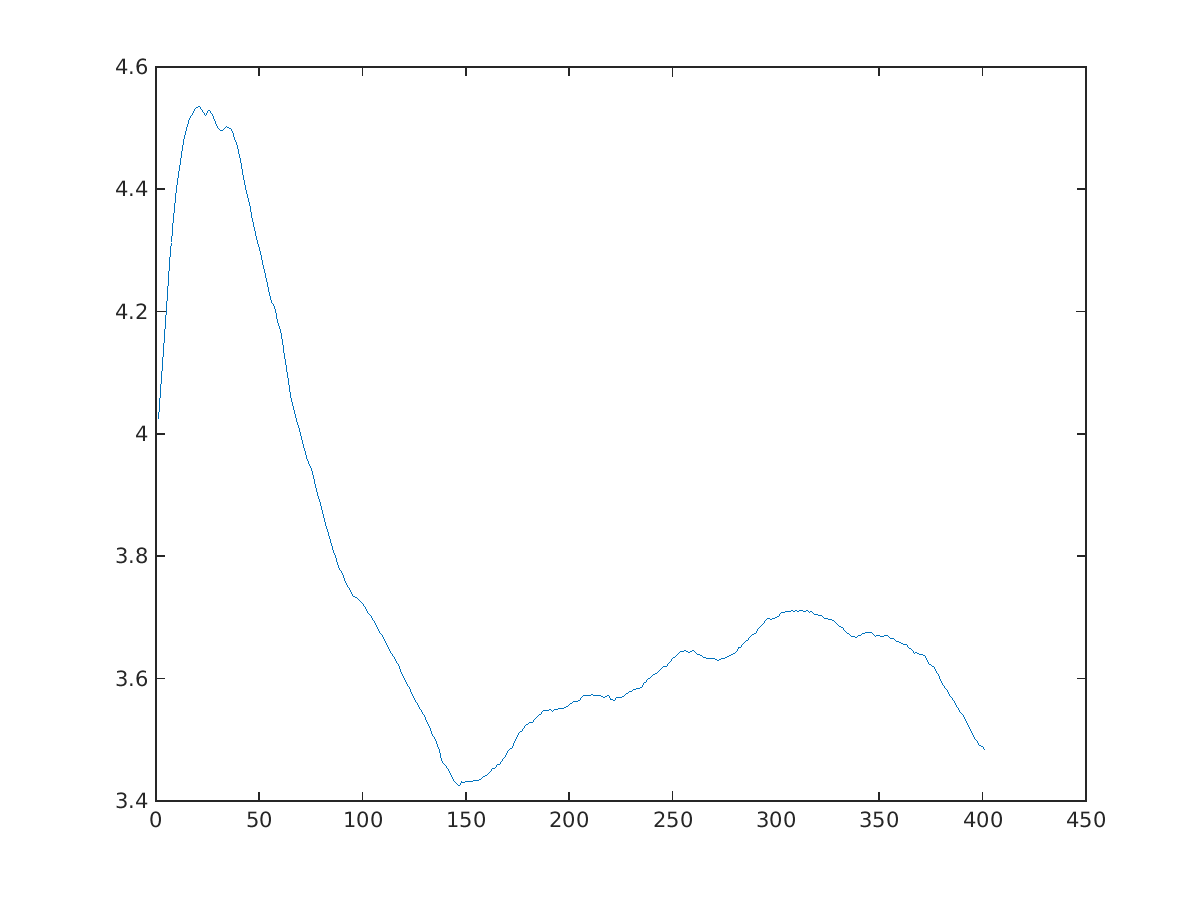} \\
\includegraphics[width=.45\textwidth]{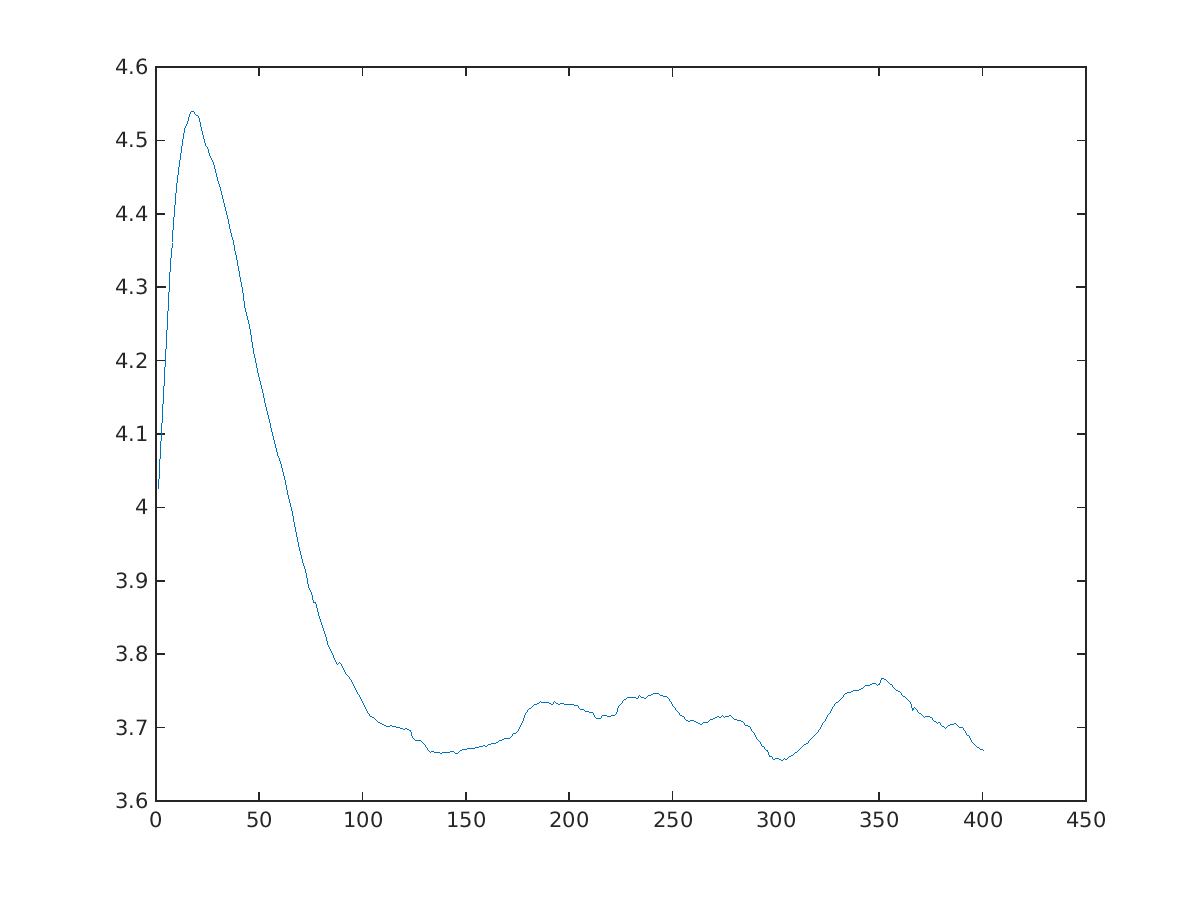}
 \caption{Values $J^{\tn_k,N}_\alpha (\tn_{k+1})$ for different regularization parameters $\alpha_i$ plotted against iteration index $k$. 
Top left: $\alpha_1 = 0.3$, top right: $\alpha_2 = 0.9$, middle left: $\alpha_3 = 1.5$, 
middle right: $\alpha_4 = 2.1$, bottom: $\alpha_5 = 2.7$.}
 \label{fig:4_Ex1ZF}
\end{figure}
\begin{figure}[H]
\centering
\includegraphics[width=.45\textwidth]{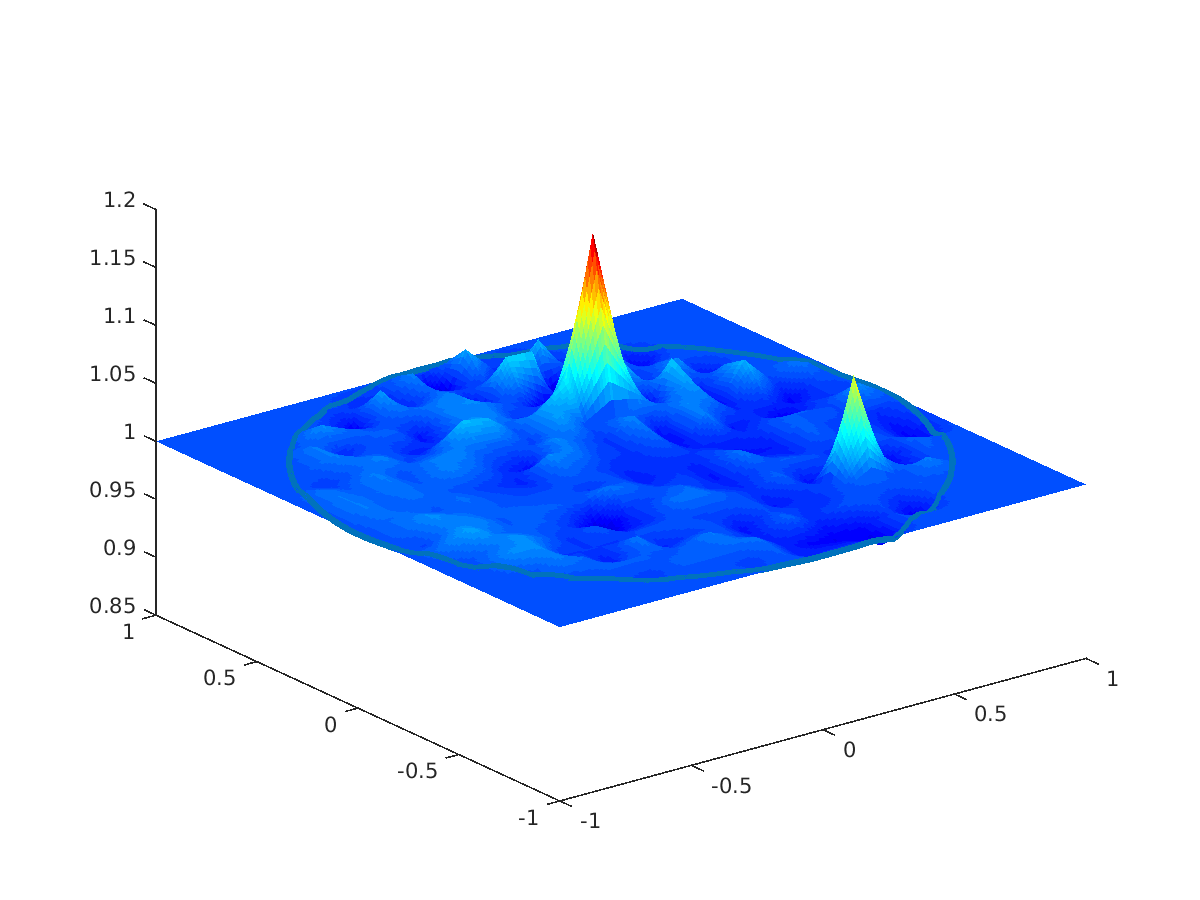}
\includegraphics[width=.45\textwidth]{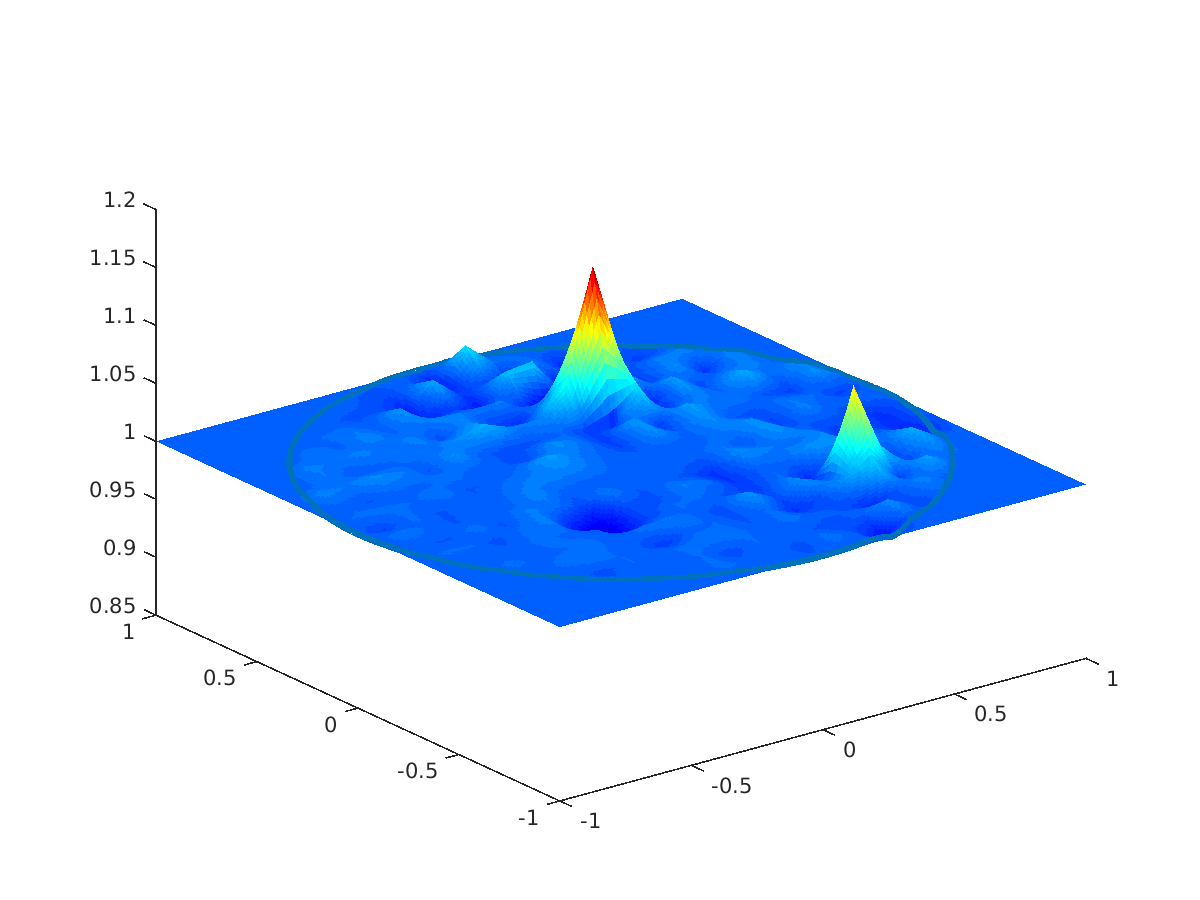} \\
\includegraphics[width=.45\textwidth]{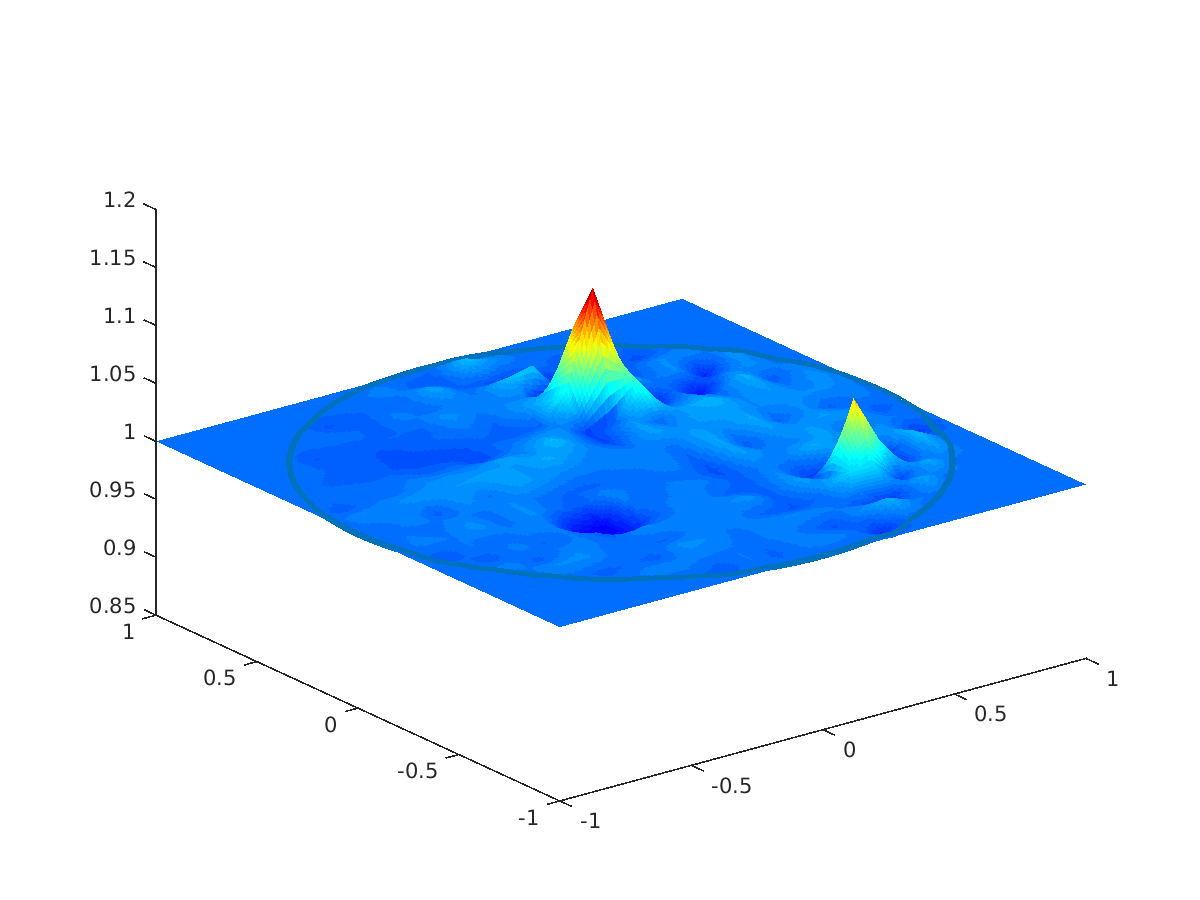} 
\includegraphics[width=.45\textwidth]{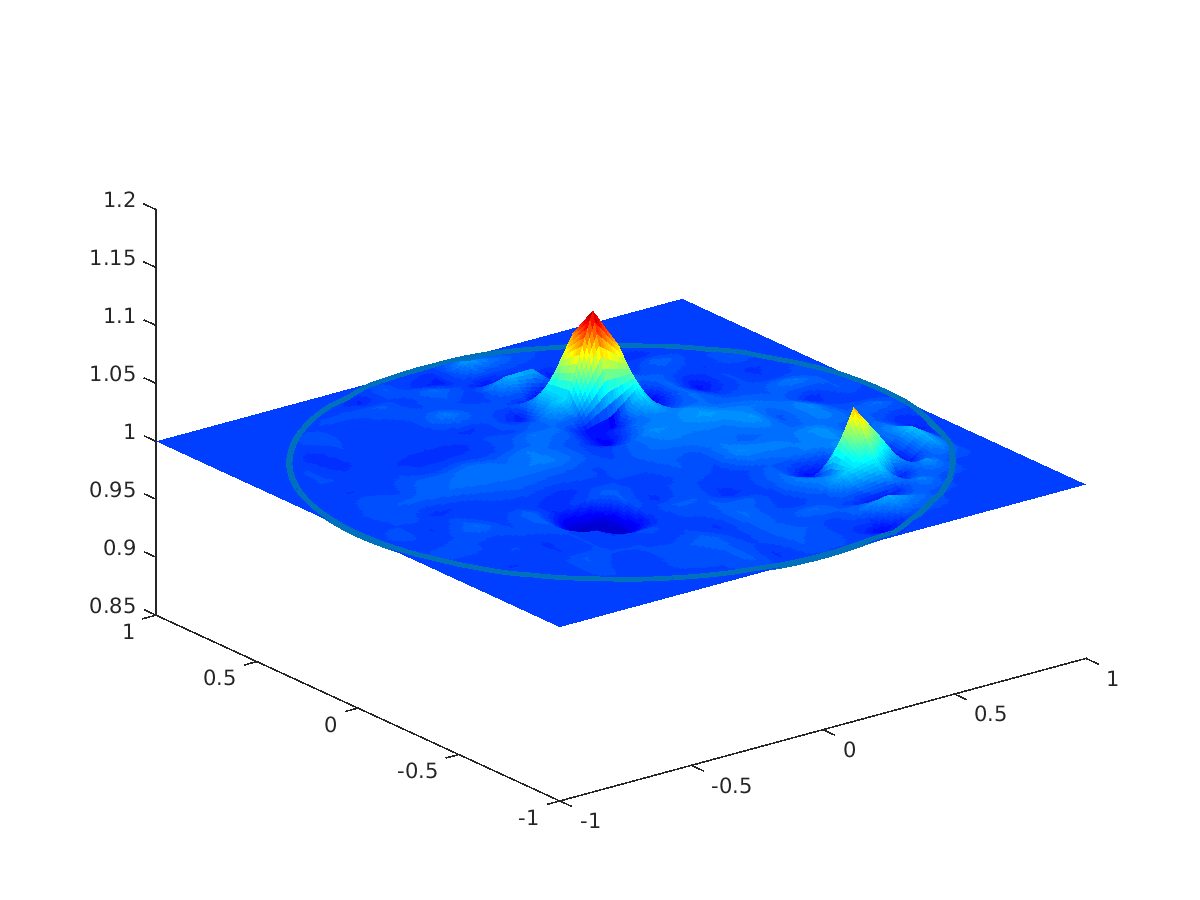} \\
\includegraphics[width=.45\textwidth]{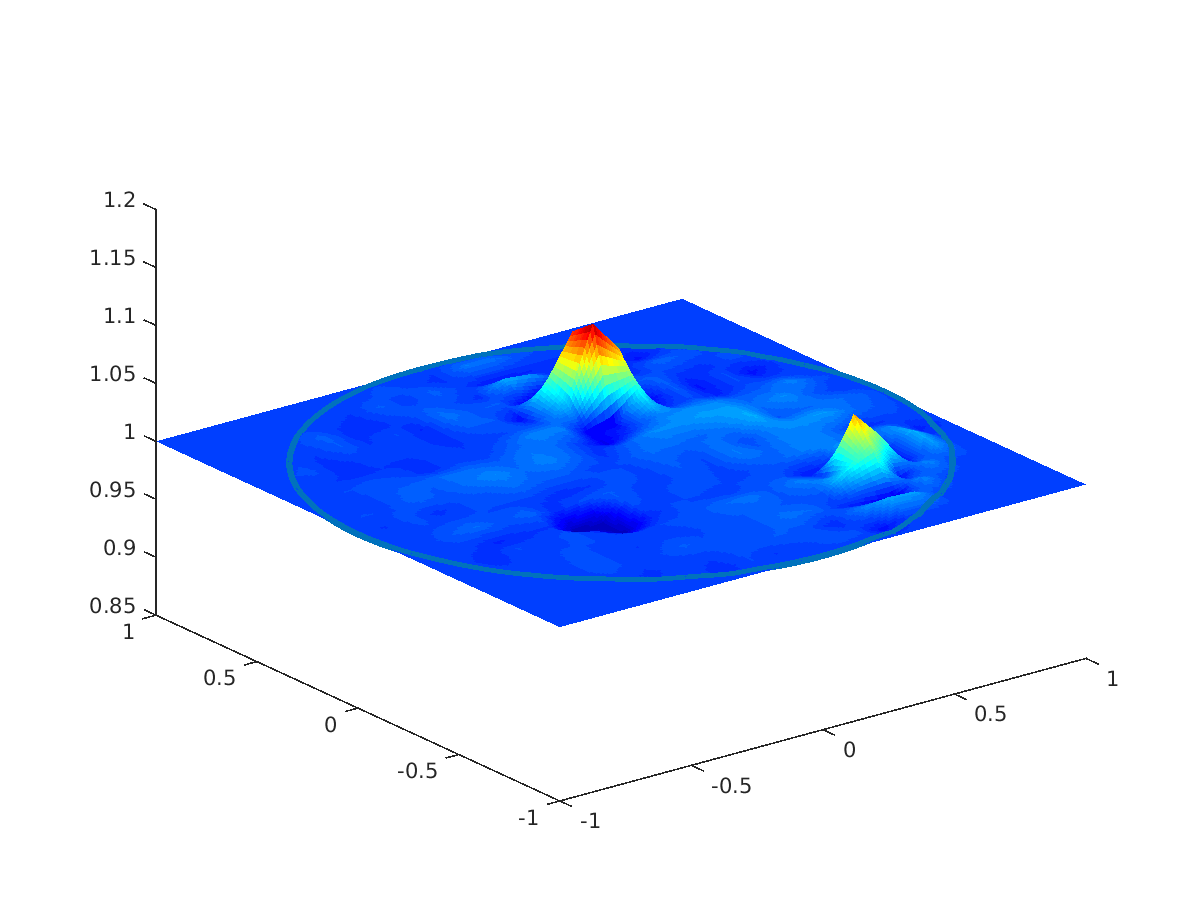}
\includegraphics[width=.45\textwidth]{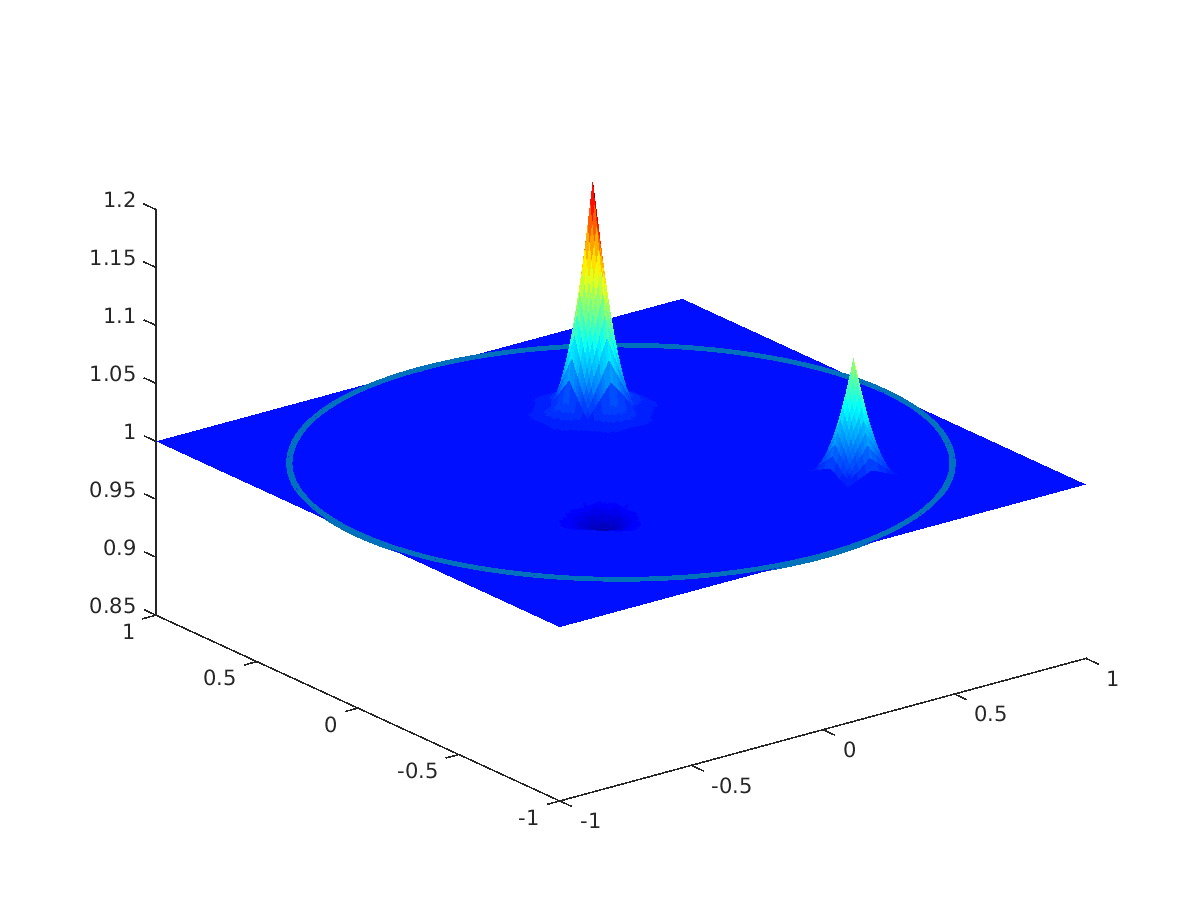}
 \caption{Exact sound speed $c(x)$ and reconstructions $c_{k^*_i}(x)$ for different regularization parameters $\alpha_i$. Top left: $\alpha_1 = 0.3$, top right: $\alpha_2 = 0.9$, middle left: $\alpha_3 = 
1.5$, middle right: $\alpha_4 = 2.1$, bottom left: $\alpha_5 = 2.7$, bottom right: original sound speed.}
 \label{fig:4_Ex1SGV}
\end{figure}
\begin{figure}[H]
\centering
\includegraphics[width=.45\textwidth]{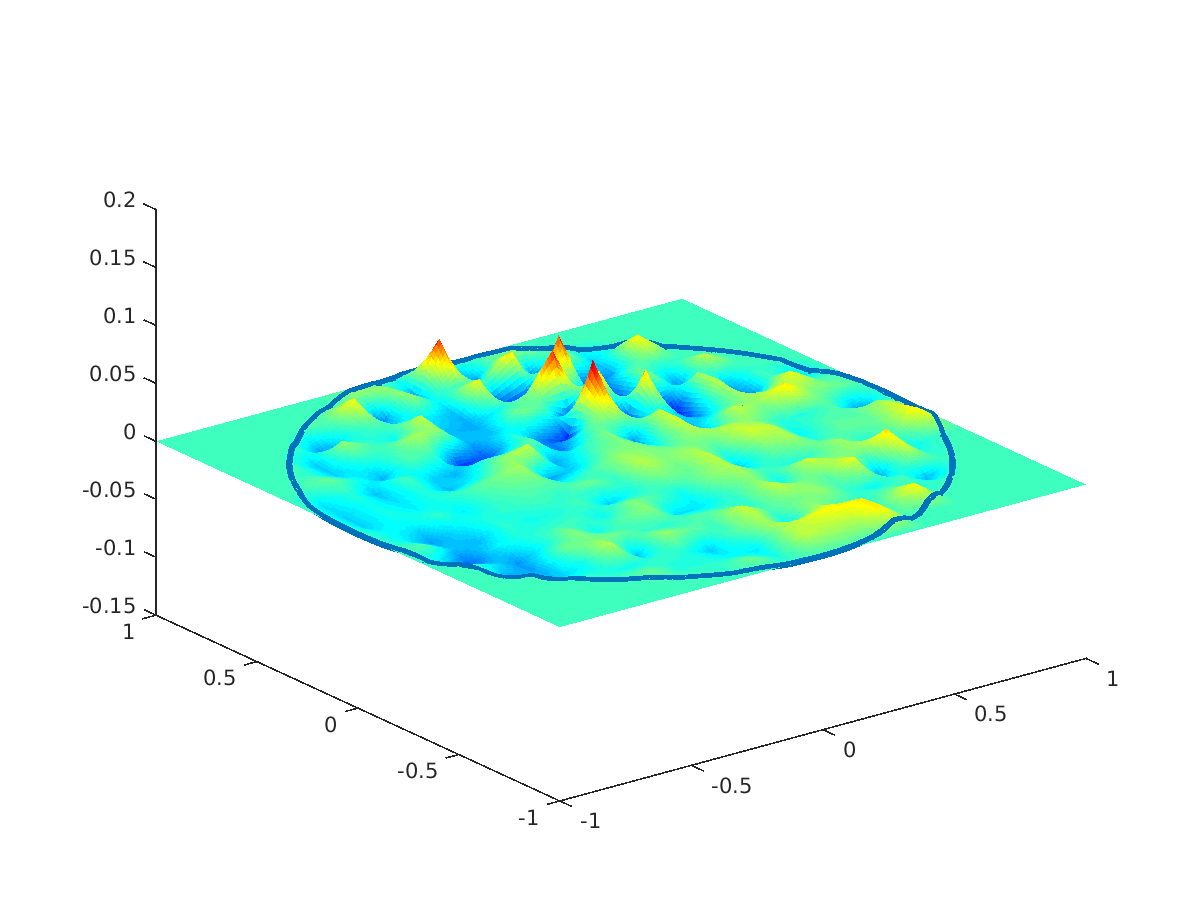}
\includegraphics[width=.45\textwidth]{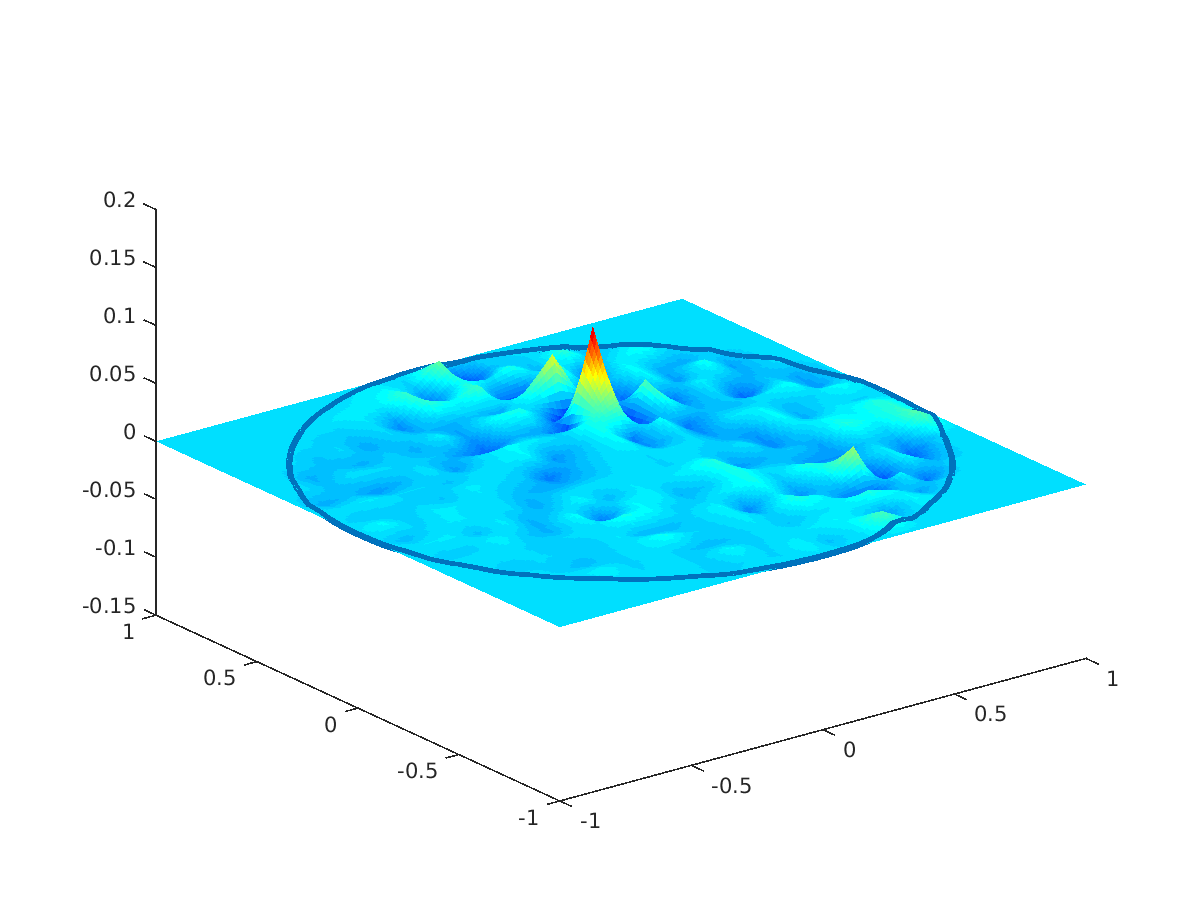} \\
\includegraphics[width=.45\textwidth]{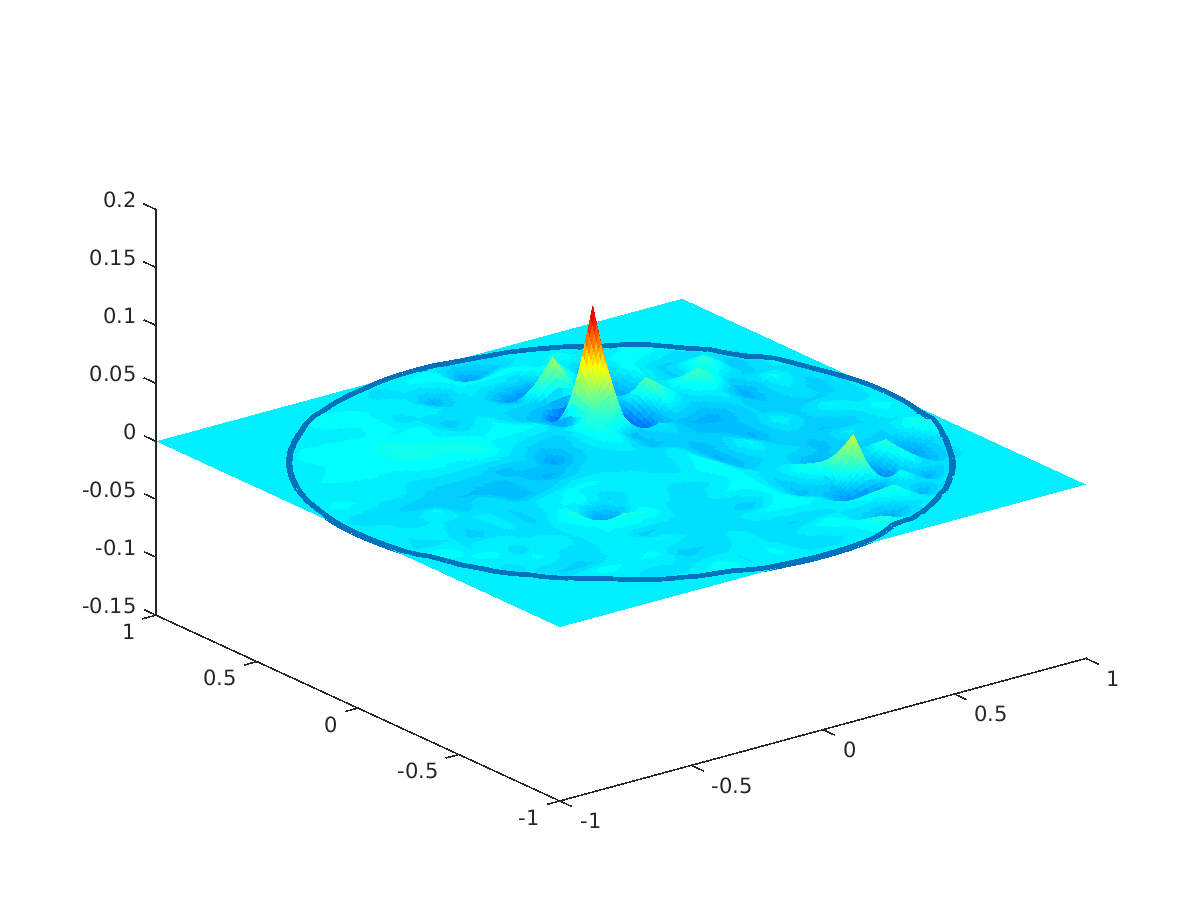} 
\includegraphics[width=.45\textwidth]{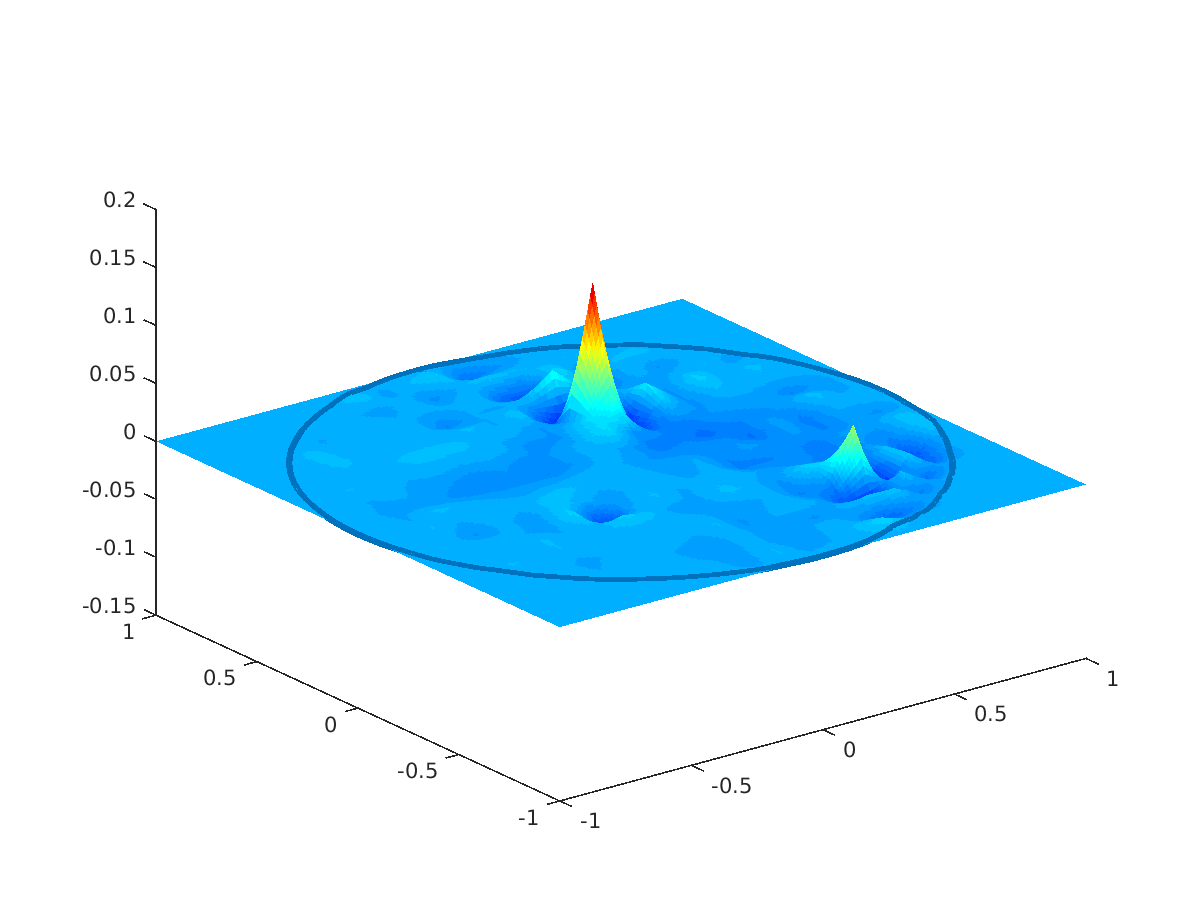} \\
\includegraphics[width=.45\textwidth]{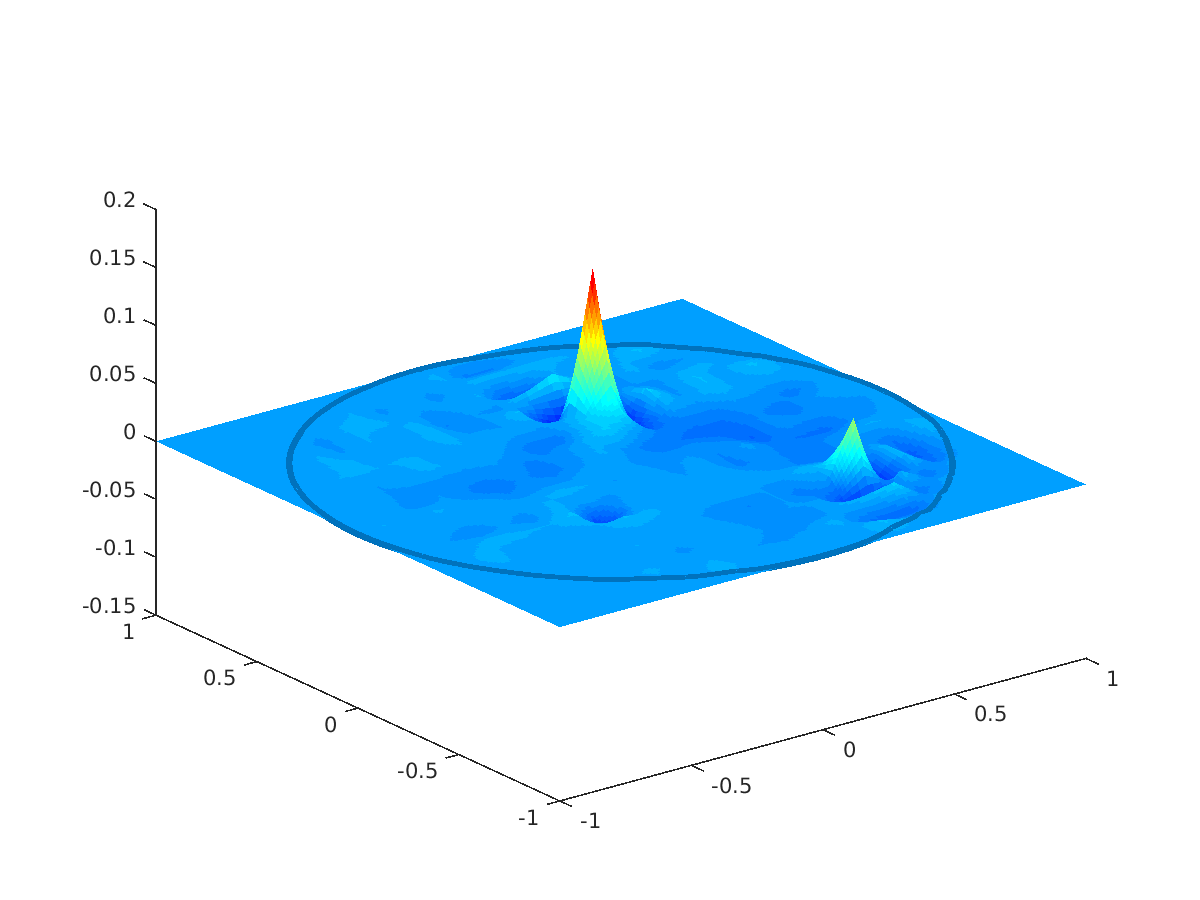}
 \caption{Errors $c(x) - c_{k^*_i}(x)$ for different regularization parameters $\alpha_i$. \emph{Top left:} $\alpha_1 = 0.3$, \emph{top right:} $\alpha_2 = 0.9$, 
\emph{middle left:} $\alpha_3 = 1.5$, \emph{middle right:} $\alpha_4 = 2.1$, \emph{bottom:} $\alpha_5 = 2.7$.}
 \label{fig:4_Ex1SGVDiff}
\end{figure}
\begin{figure}[H]
\centering
\includegraphics[width=.30\textwidth]{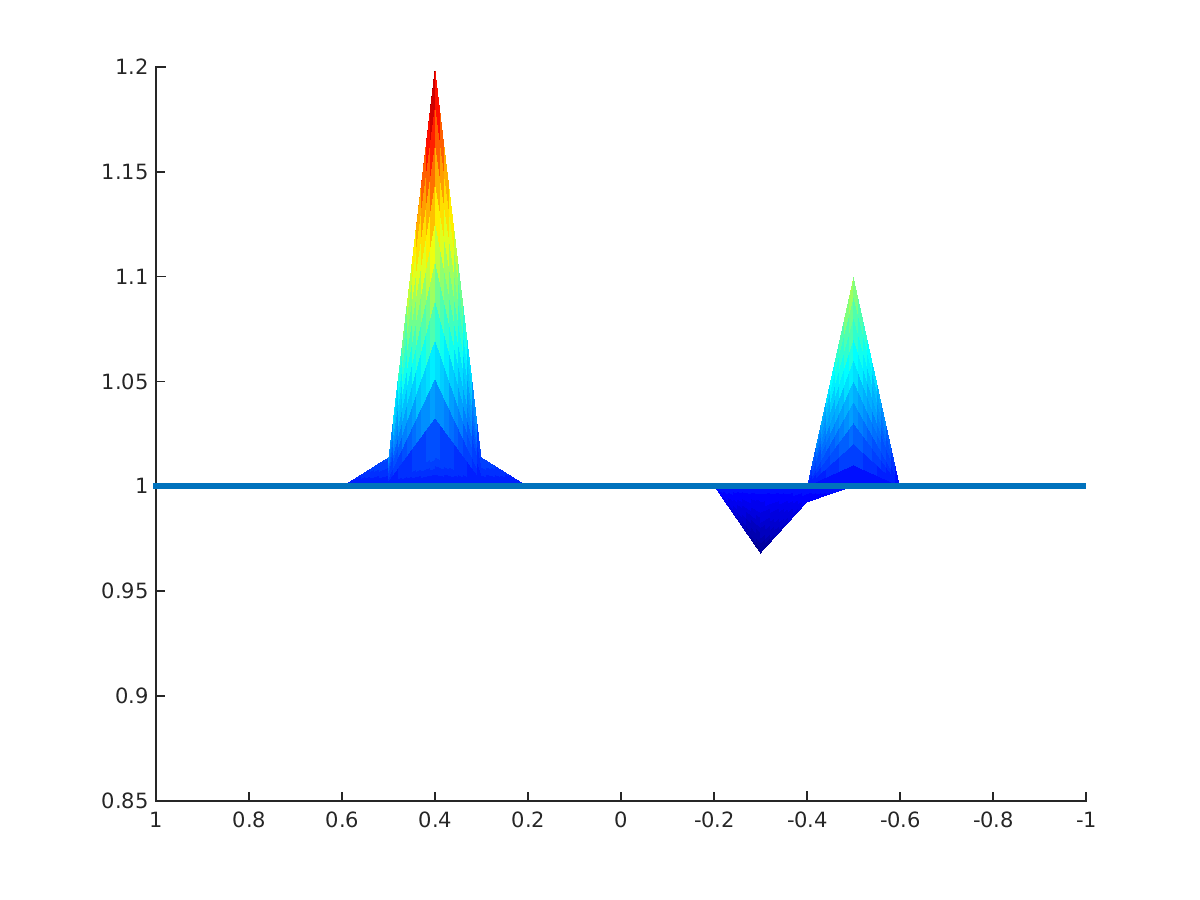}
\includegraphics[width=.30\textwidth]{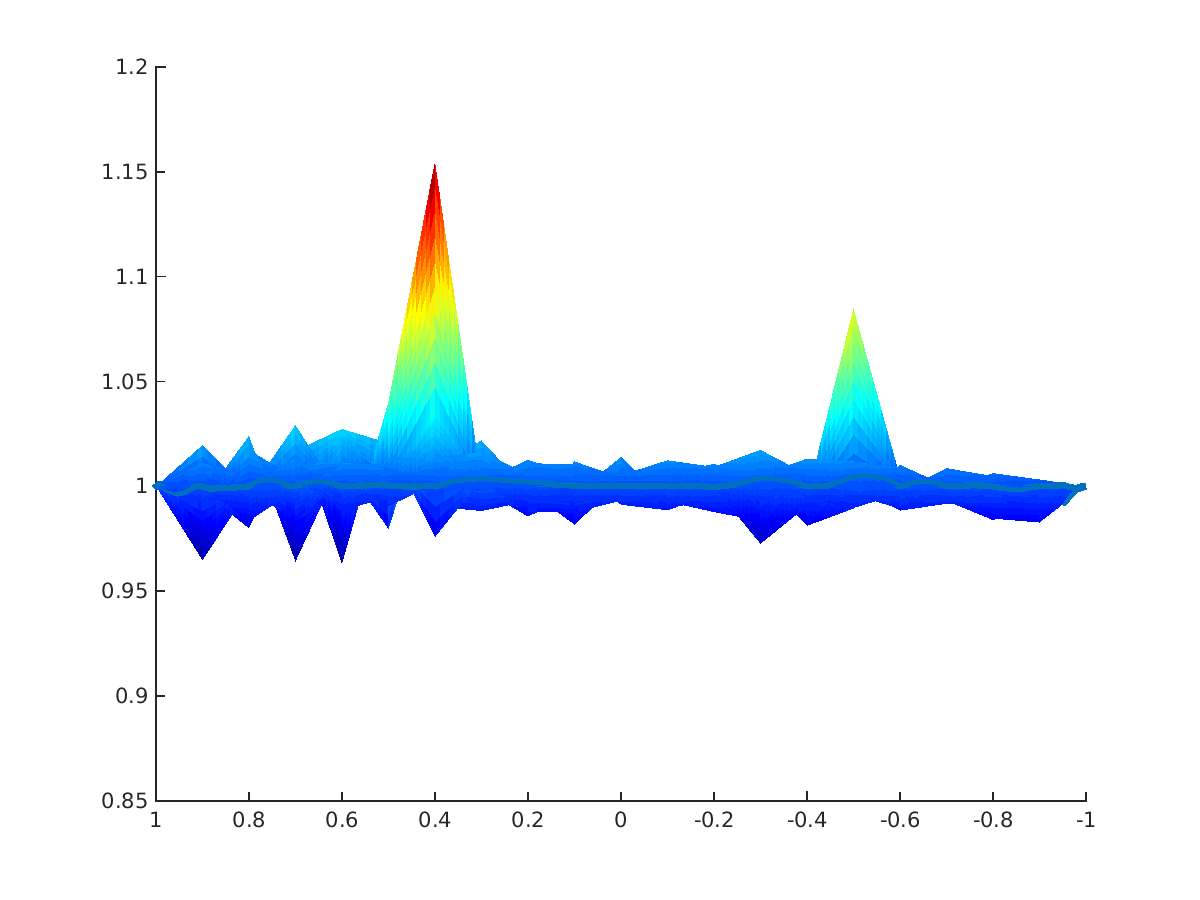}
\includegraphics[width=.30\textwidth]{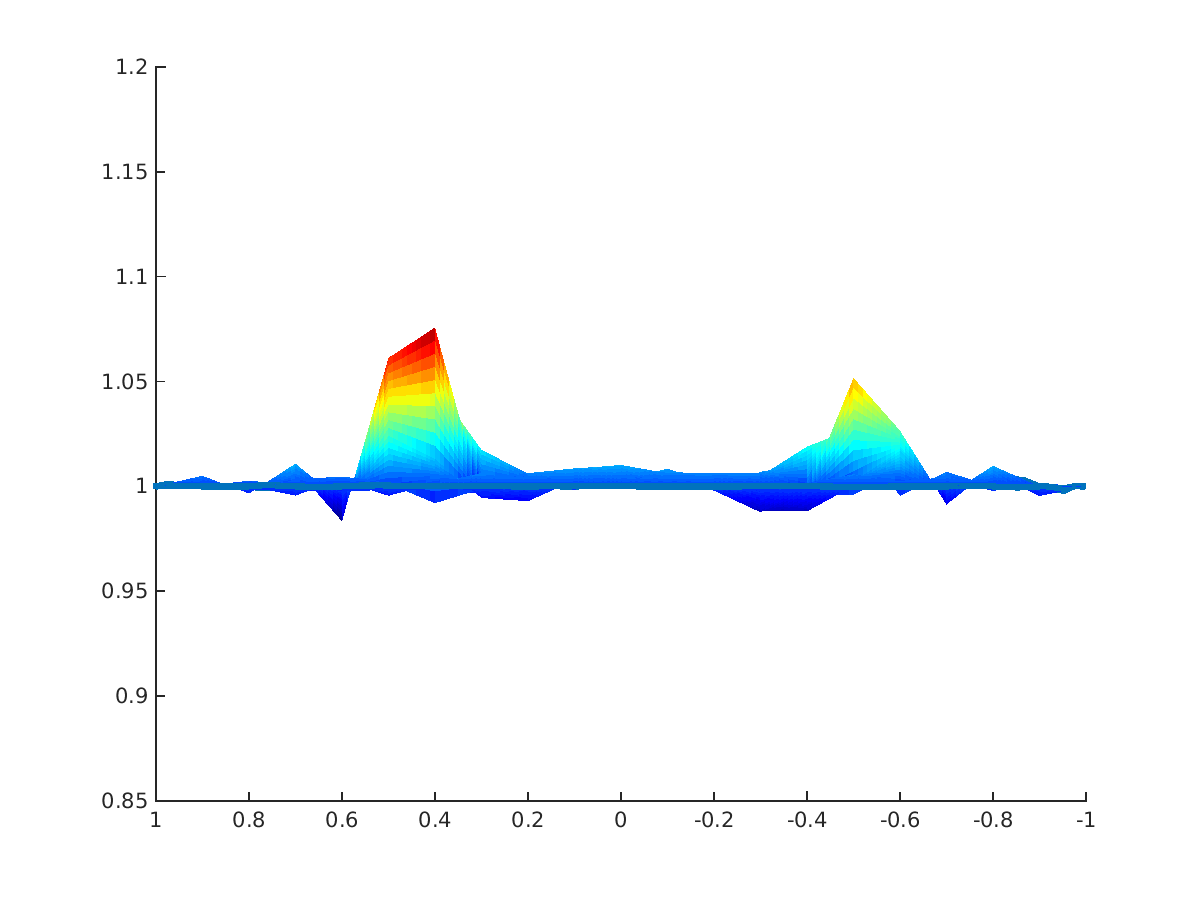}
 \caption{Exact sound speed $c(x)$ and reconstructions $c_{k^*_i}(x)$ for different regularization parameters $\alpha_i$. \emph{Left:} original, \emph{middle:} $\alpha_1 = 0.3$ \emph{right:} $\alpha_5 = 2.7$.}
 \label{fig:4_Ex1SGS}
\end{figure}
\begin{figure}[H]
\centering
\includegraphics[width=.45\textwidth]{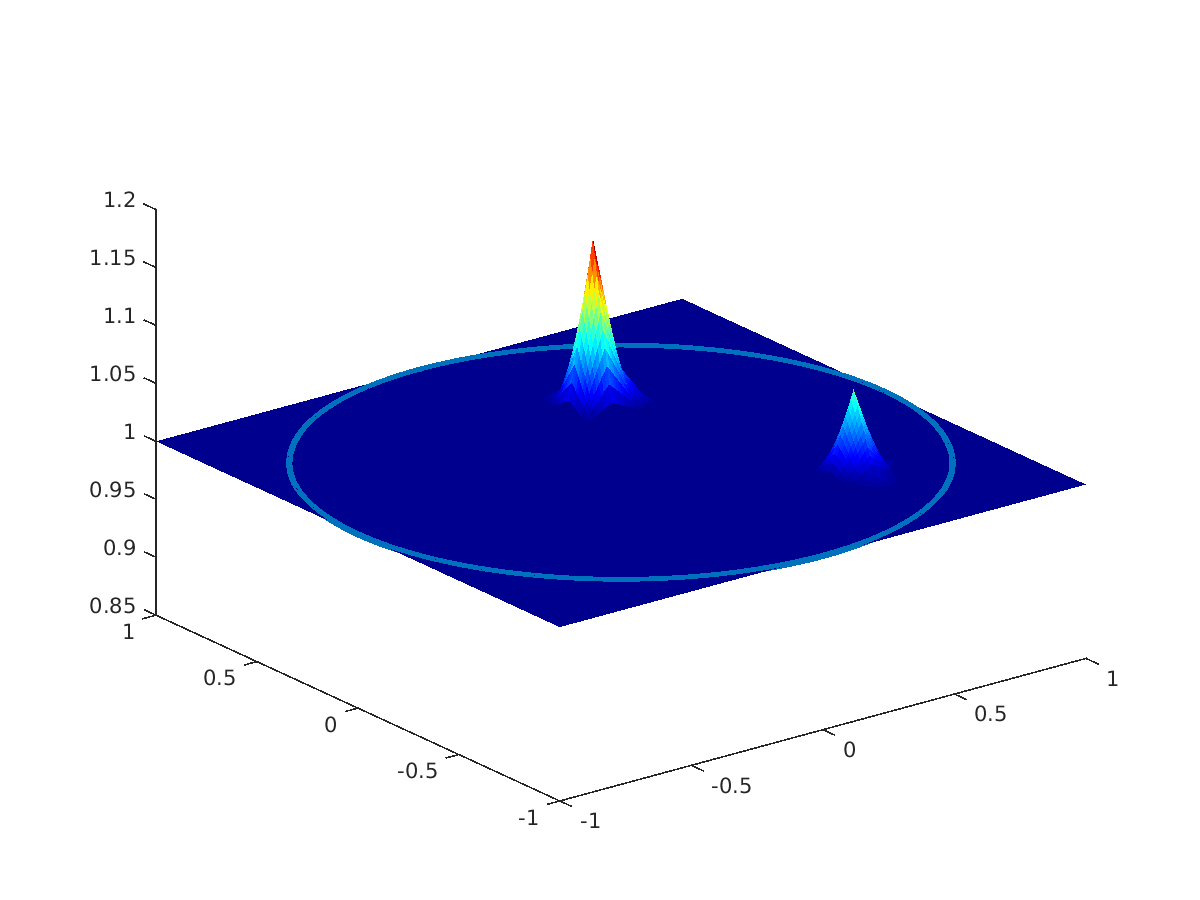}
\includegraphics[width=.45\textwidth]{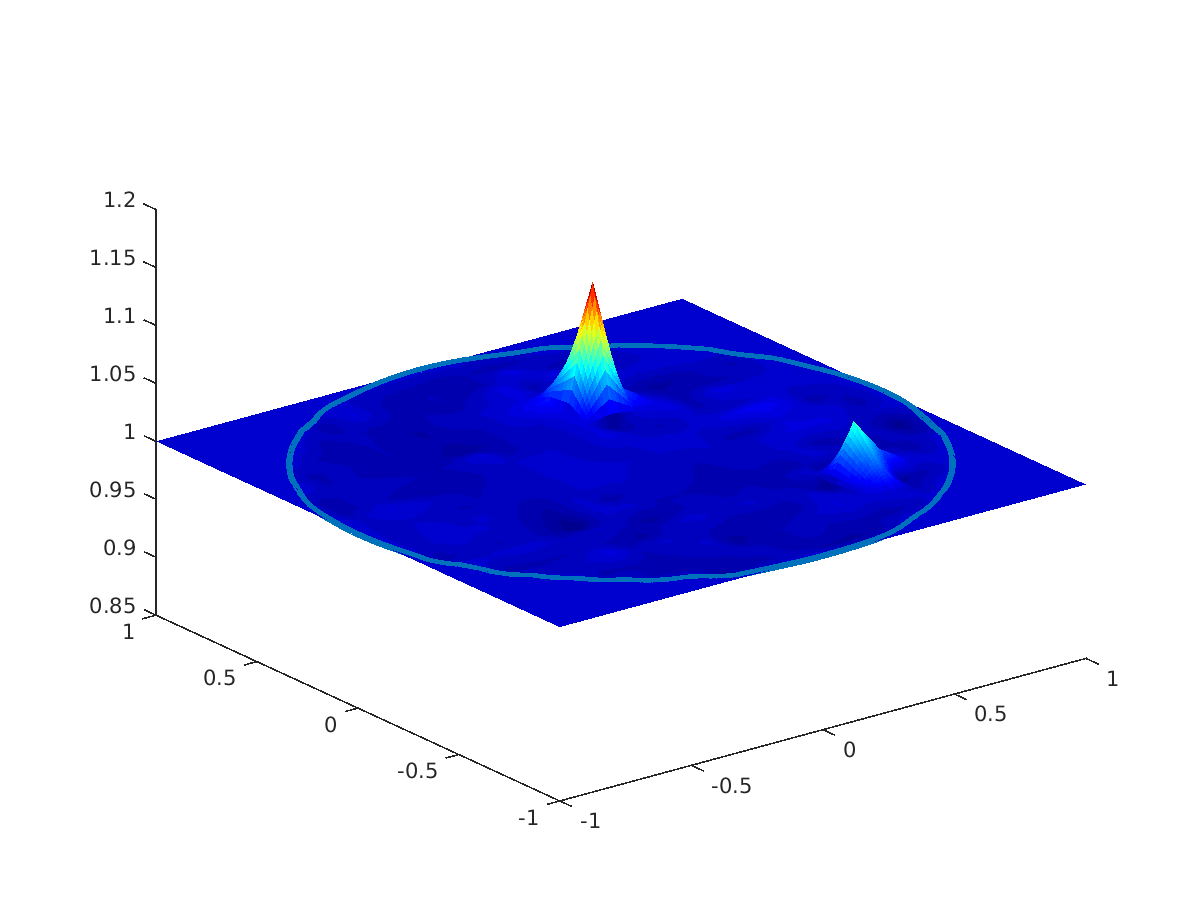} \\
\includegraphics[width=.45\textwidth]{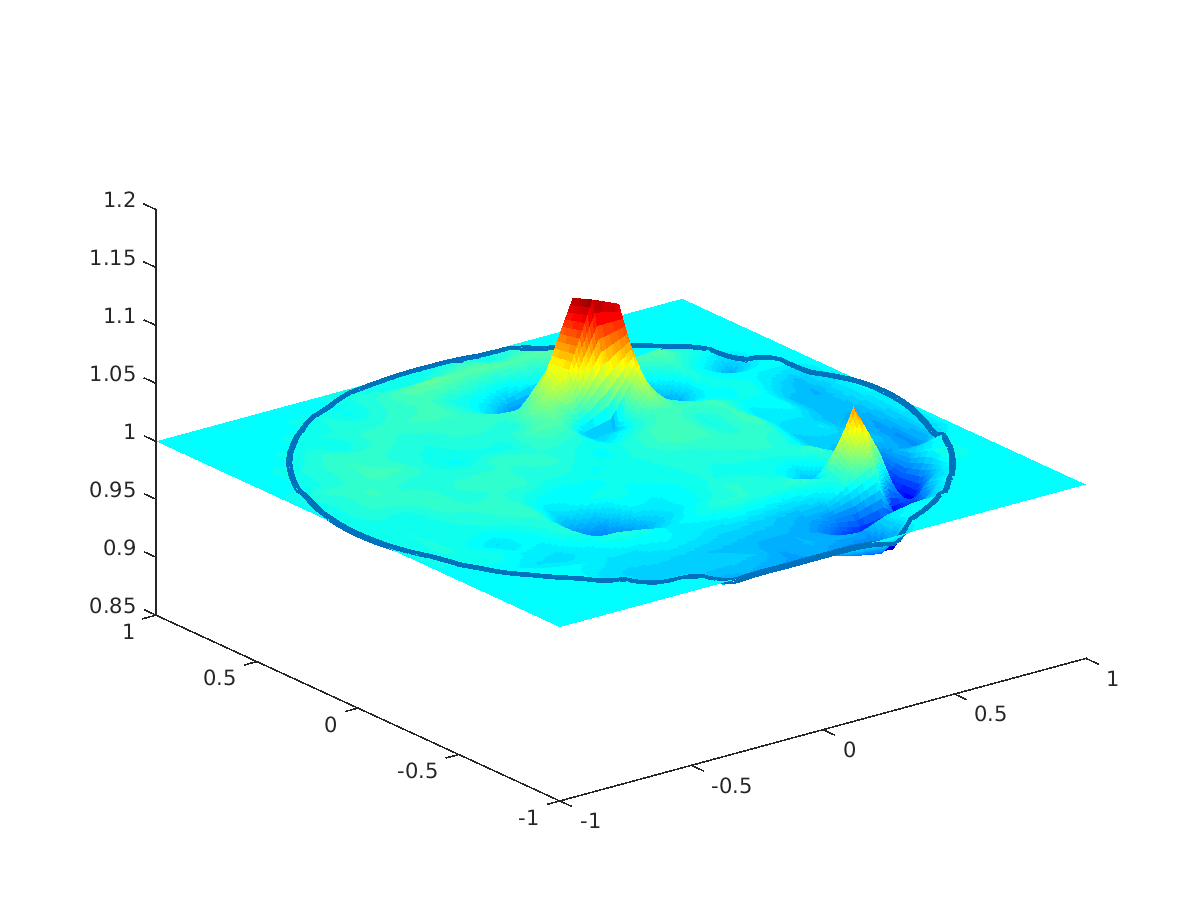} 
\includegraphics[width=.45\textwidth]{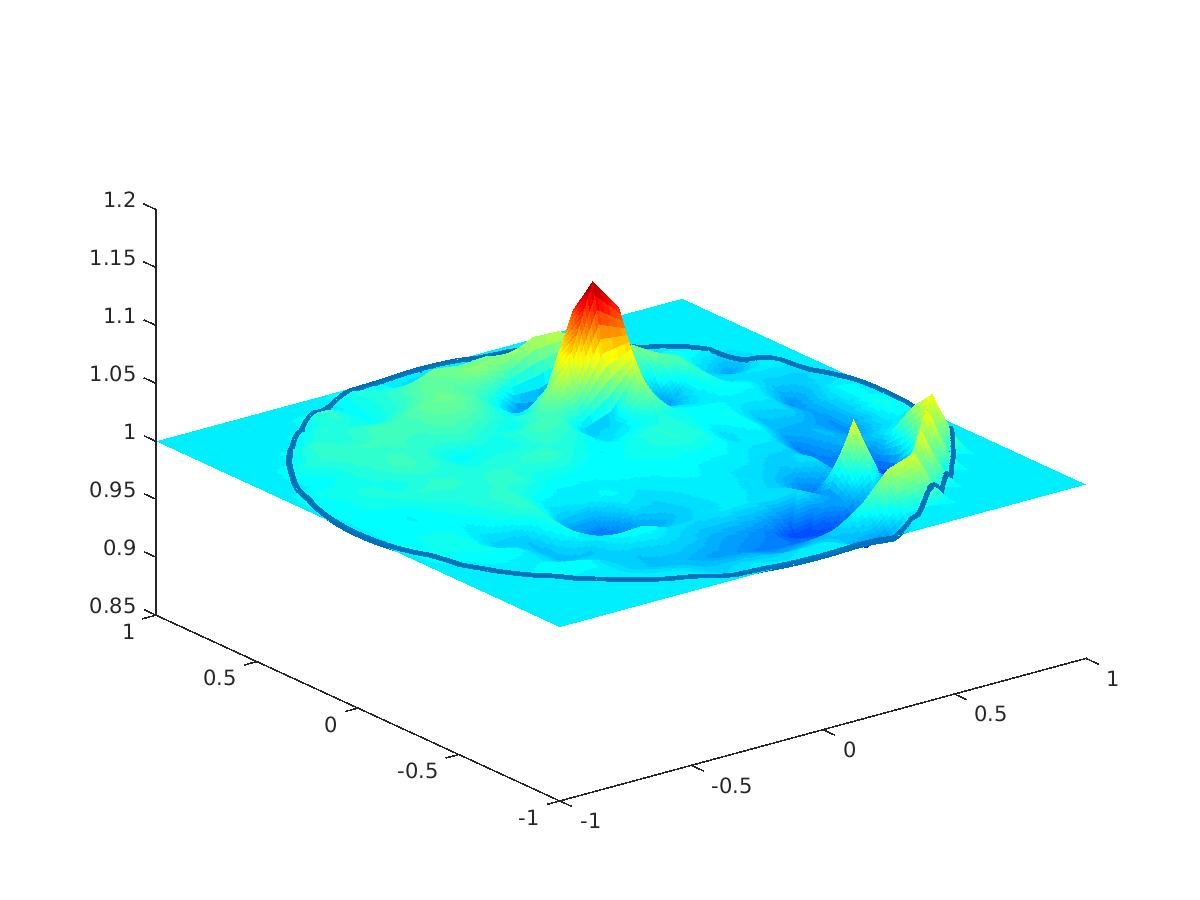} \\
\includegraphics[width=.45\textwidth]{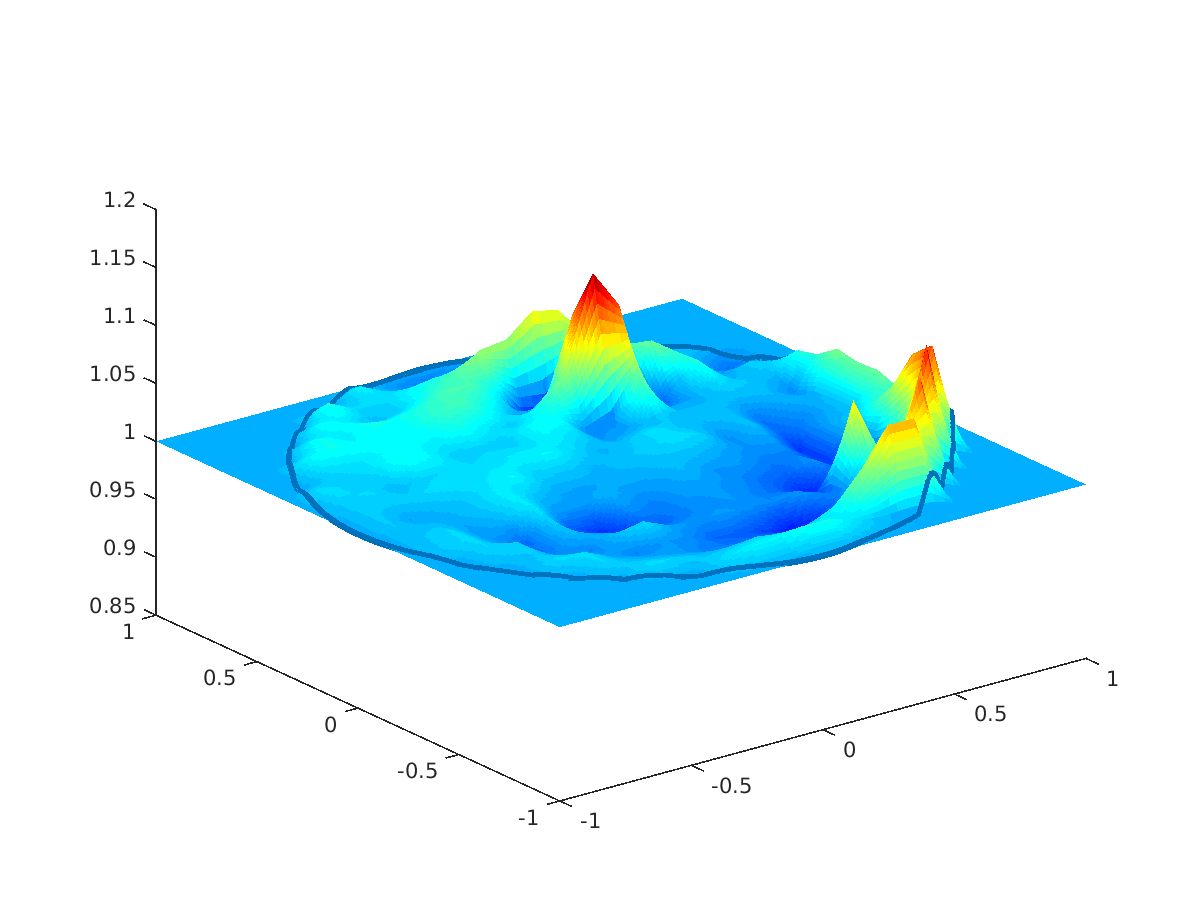}
\includegraphics[width=.45\textwidth]{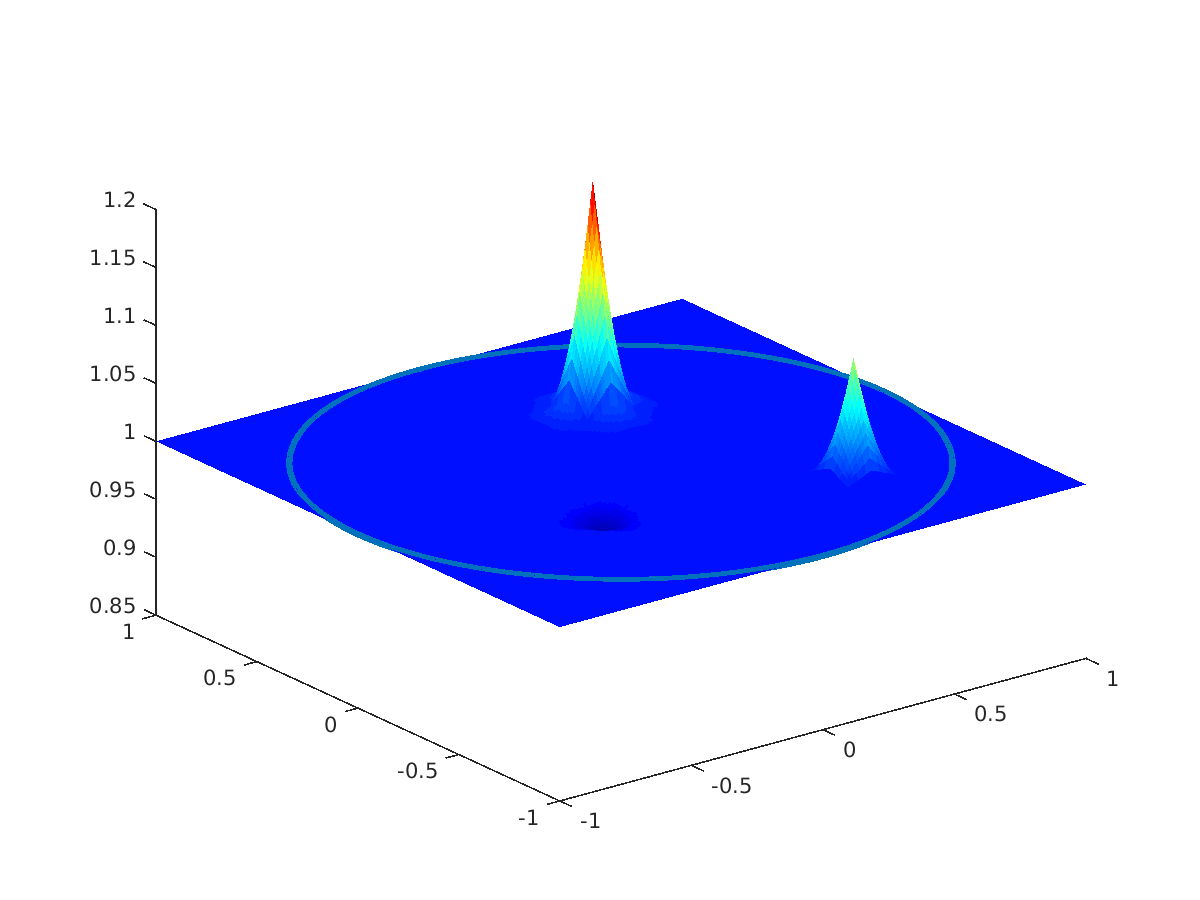}
 \caption{Reconstructions $c_{k^*_i}$ for different $p_i$-norms. \emph{Top left:} $p_1 = 1$, \emph{top right:} $p_2 = 1.1$, \emph{middle left:} $p_3 = 2$, \emph{middle 
right:} $p_4 = 4$, 
\emph{bottom left:} $p_5 = 10$, \emph{bottom right:} exact sound speed.}
 \label{fig:4_Ex2PeaksVogel}
\end{figure}
\begin{figure}[H]
\centering
\includegraphics[width=.45\textwidth]{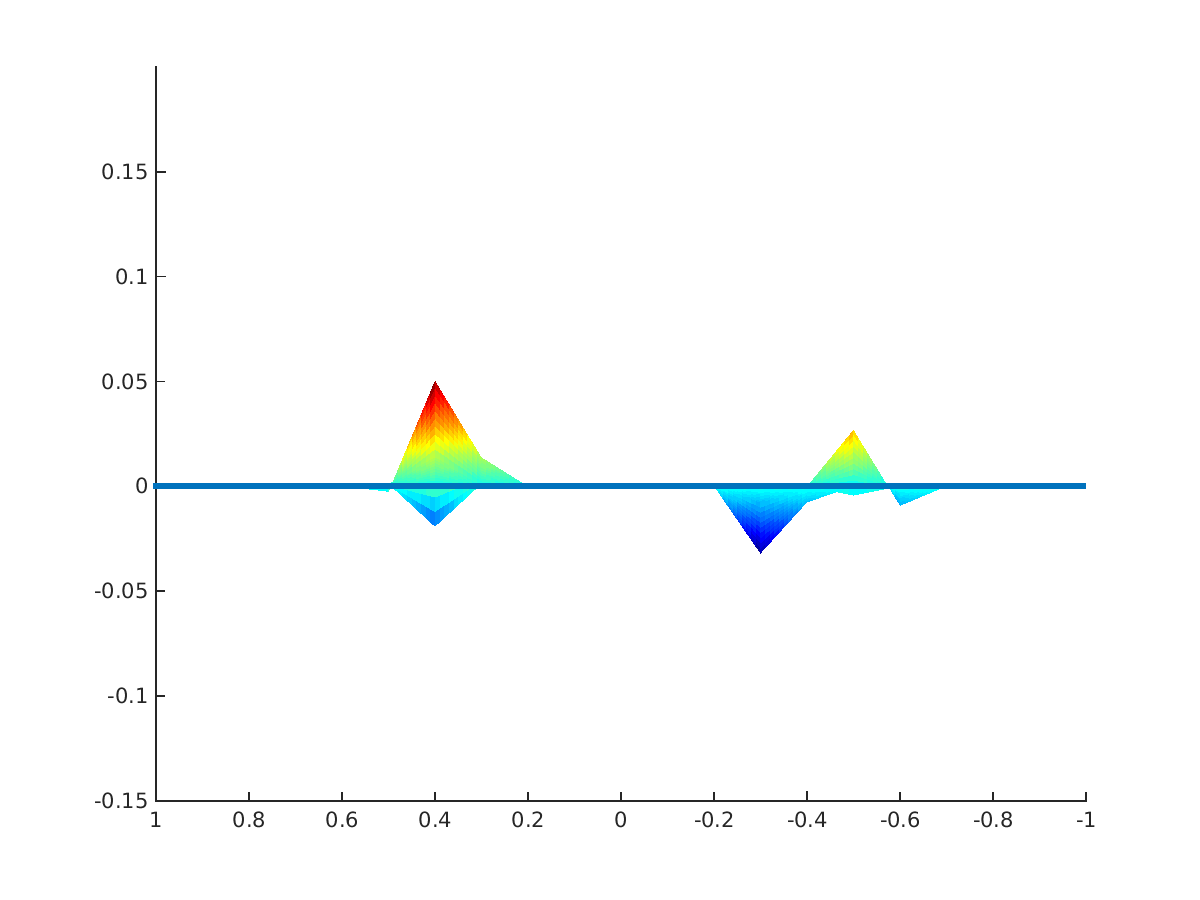}
\includegraphics[width=.45\textwidth]{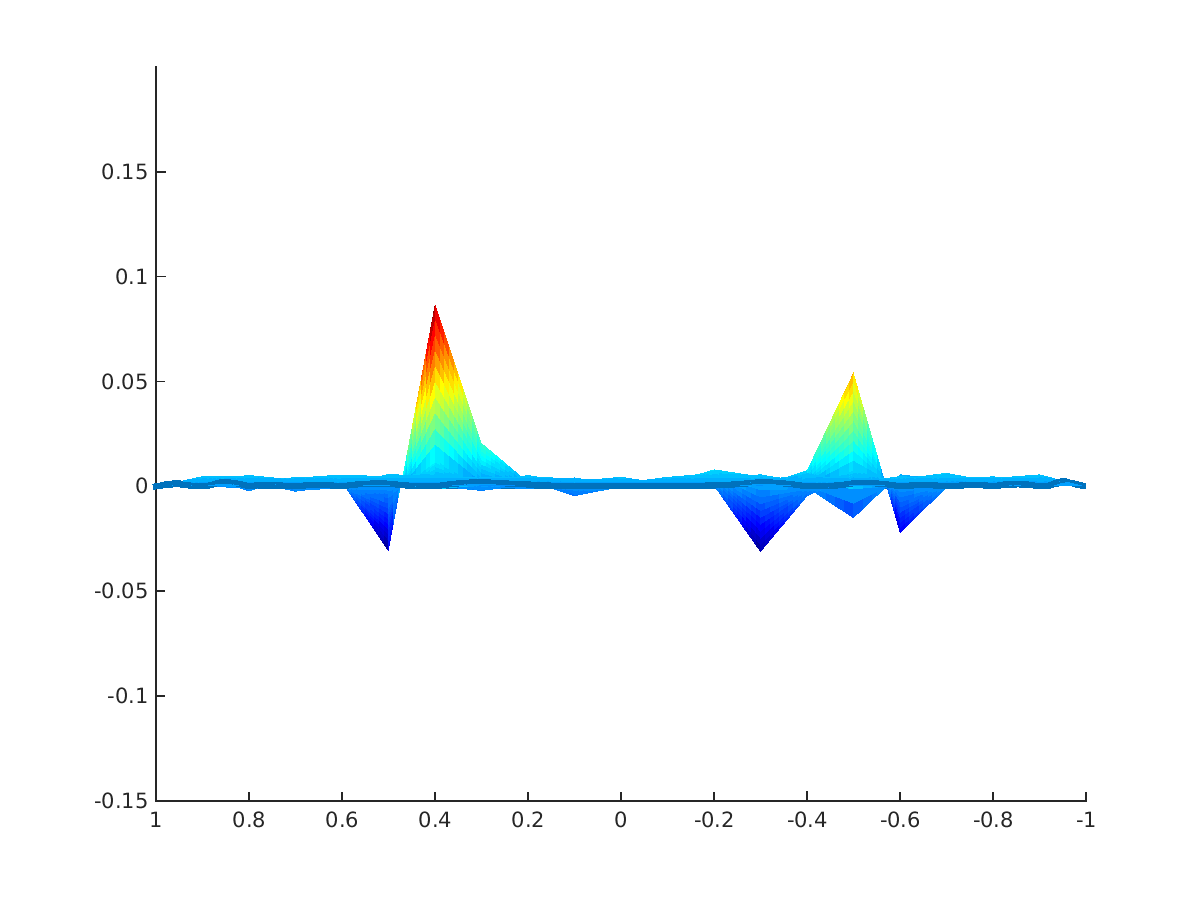} \\
\includegraphics[width=.45\textwidth]{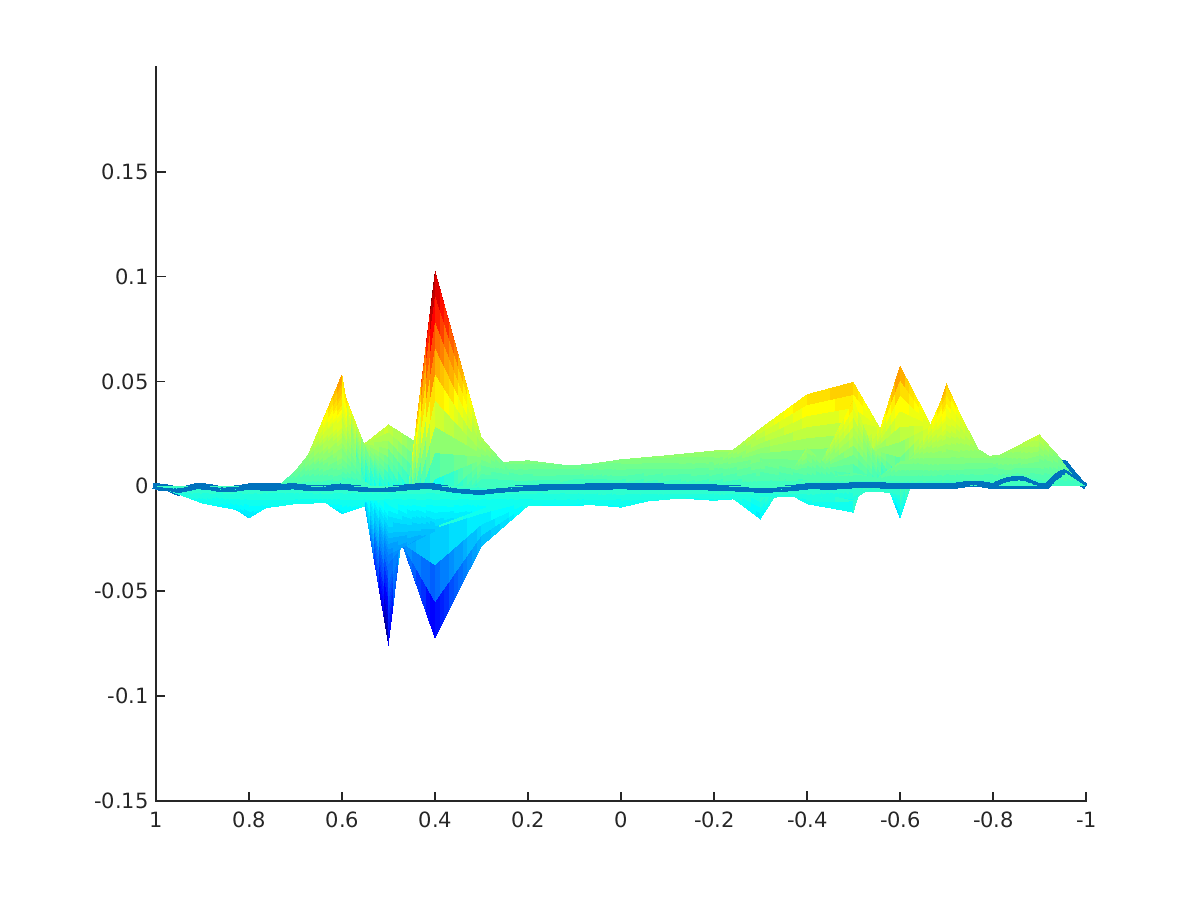} 
\includegraphics[width=.45\textwidth]{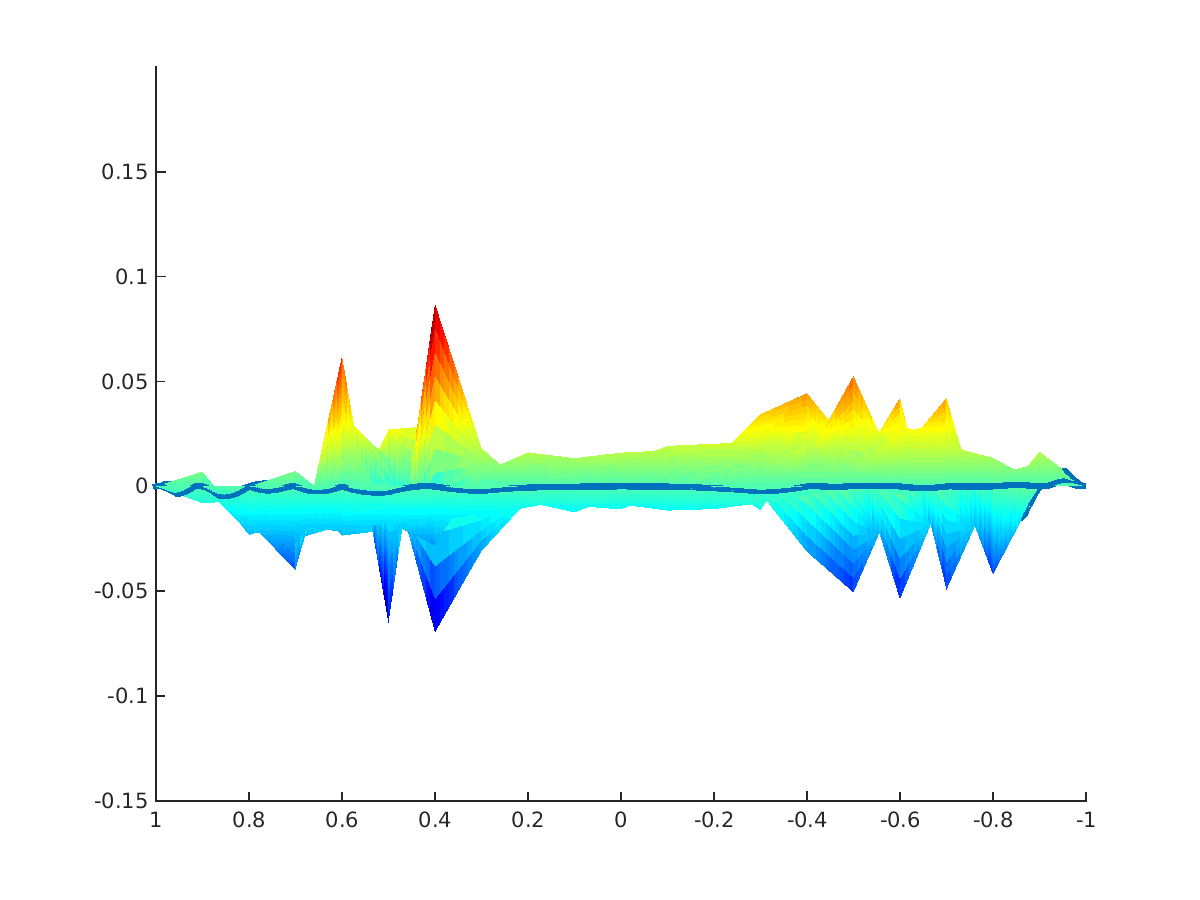} \\
\includegraphics[width=.45\textwidth]{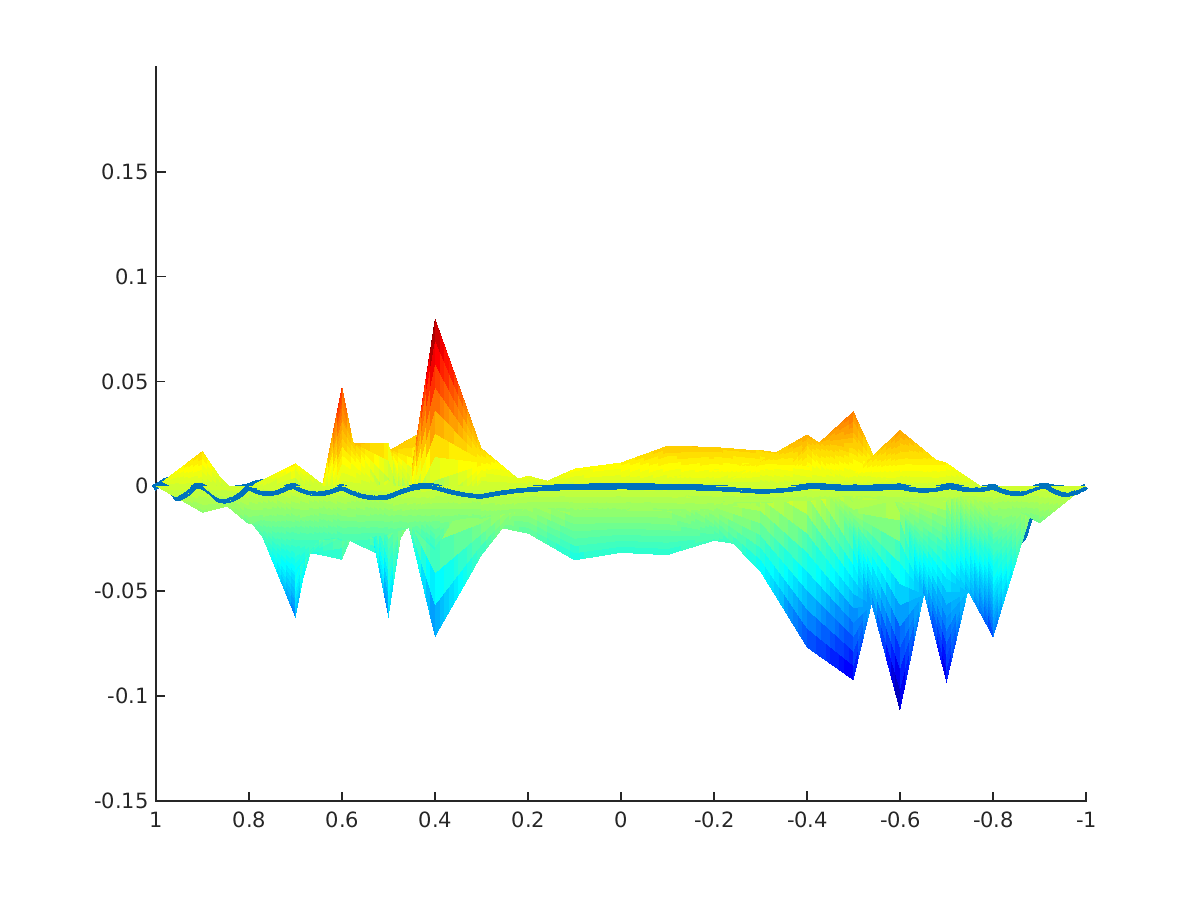}
 \caption{Errors $c(x) - c_{k^*_i}(x)$. Top left: $p_1 = 1$, top right: $p_2 = 1.1$, middle left: $p_3 = 2$, middle right: $p_4 = 4$, 
bottom: $p_5 = 10$.}
 \label{fig:4_Ex2PeaksDraufDiff}
\end{figure}

\subsubsection{Different p normes}

Again we consider the exact speed of sound (\ref{sound-peaks}) but now we calculate reconstructions with different $p$-norms. More explicitly we set
\[
  p_1 = 1,\, p_2 = 1.1,\, p_3 = 2,\, p_4 = 4 \text{ and } p_5 = 10.
\]
In case $p_1 = 1$ we implemented the soft threshold method as presented in \cite{daub} with $\alpha = 0.01$ in this case. For all other $p_i$ we set $\alpha = 0.2$. 
In this series of reconstructions we chose $N_x = 80$ and $N_\xi = 80$ yielding $\left|\overline \Gamma_N^h \right| = 6400$ geodesics to be computed in each iteration setp.
The computation of a full set of geodesics lasts about $70$ seconds, the evaluation of $R_{\tn_k}^*$ about 20 seconds. In every 
iteration step we make one descent step with step size parameter $\mu^1 = 0.05$. The uinit square was discretized again using $h=0.1$.
The optimal stopping indices $k^*_i$, $i=1,\ldots,5$ were
\[
 k^*_1 = 42,\, k^*_2 = 50,\, k^*_3 = 8,\, k^*_8 = 10 \text{ und } k^*_5 = 11.
\]
The reconstructions are visualized in Figure \ref{fig:4_Ex2PeaksVogel} compared with the exact $c$. As expected the $p_1=1$ and 
$p_2=1.1$ norms lead to the best reconstructions because $c(x)$ and thus $\tn (x)$ is sparse. Particularily the most part of the reconstruction is identical to $1$. 
For the choices $p_3=2$, $p_4=4$ and $p_5=10$ one discovers increasing smoothness but also fluctuations of the solution. It is interesting that the small peak at $q_3=(0.5, -0.5)$ seems
to cause severe artifacts at the boundary $\partial \BB$ for $p_5=10$. This is also emphasized in Figure \ref{fig:4_Ex2PeaksDraufDiff} where the errors $c(x)-c_{k^*_i}(x)$
are plotted. Indeed all peaks are detected correctly 
(the maximal error is about $0.1$), but for higher norms the fluctuations in the Euclidean areas, i.e. areas where $\tn=1$, increase significantly. 
Particularily the error is as large as the detected peaks. The artifacts close to the boundary $\partial\BB$ become also obvious when comparing the traces of the geodesics obtained for $p_1=1$ and $p_5=10$
to those of the exact solution $\tn(x)$, see Figure \ref{fig:4_Ex2PeaksGeodaeten}. The left picture shows a good match of the reconstructed and approximated set of geodesics $\mathcal{G}_{k^*}$,
whereas one clearly recognizes big aberrations in the picture to the right.\\
\begin{figure}[H]
\centering
\includegraphics[width=.45\textwidth]{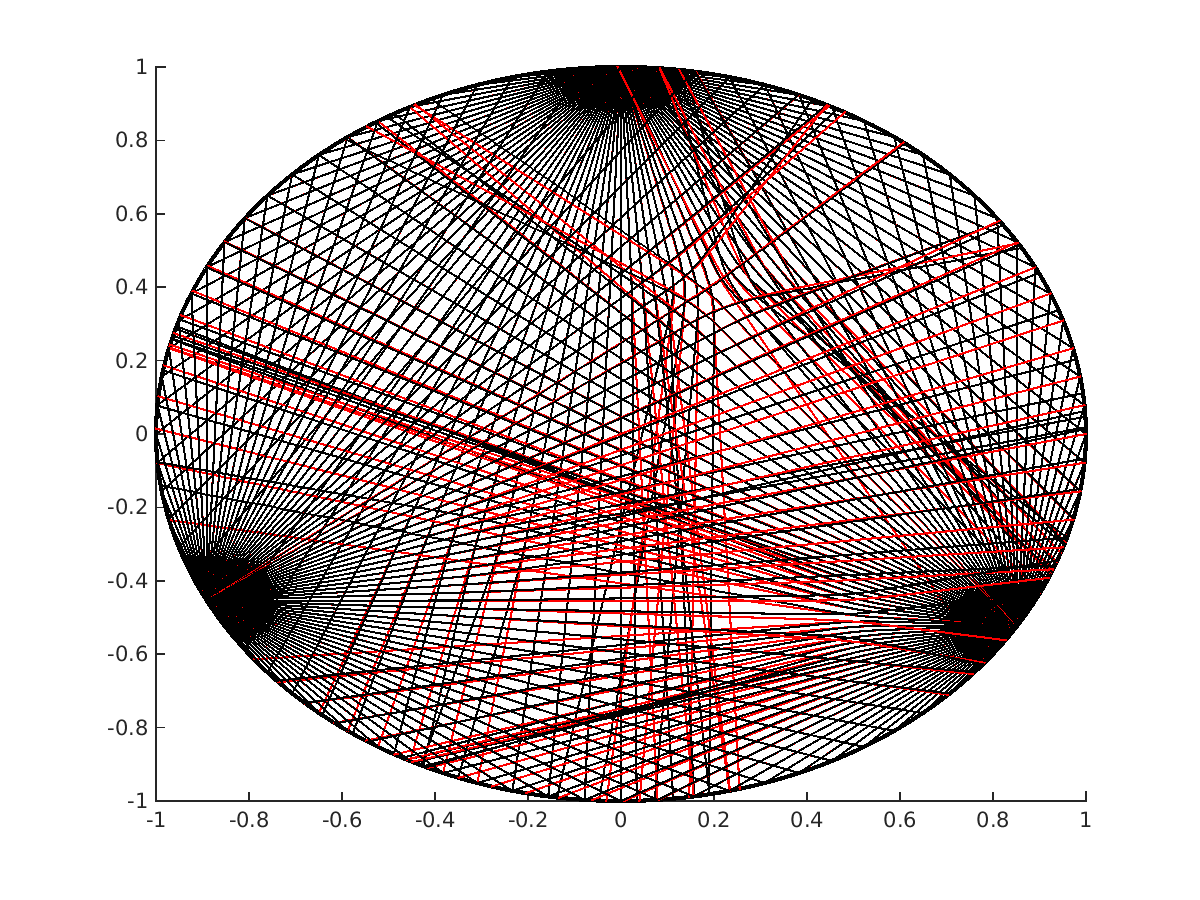}
\includegraphics[width=.45\textwidth]{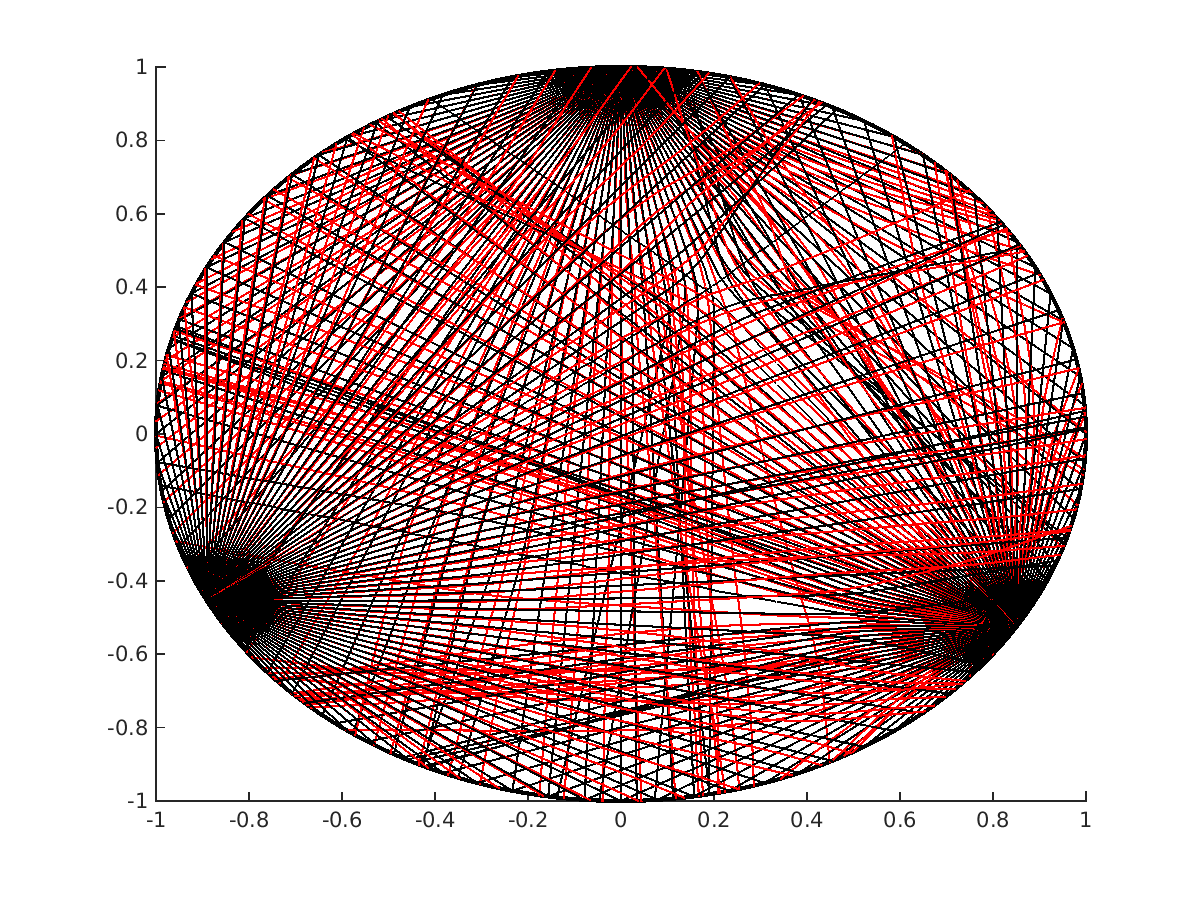} 
 \caption{Traces of reconstructed geodesics. The black curves are geodesics of the exact refractive index $\tn (x)$, the red curves are geodesics of the reconstructions $\tn_{k^*}$. 
Left picture: $p_1 = 1$, $k^* = 42$, Right picture: $p_5 = 10$, $k^* = 11$.}
 \label{fig:4_Ex2PeaksGeodaeten}
\end{figure}
%

%%%%%%%%%%%%%%%%%%%%%%%

\subsection{Sound speed with 'constant curvature'}

Next we consider a sound speed which is not sparse. Let
\[  c(x) = 1/\tn (x) := 1+ \varphi (x) \chi_{\BB} (x)    \]
with
\[  \varphi (x) = \frac{(R^2+d^2 |x|^2)^2}{4R^2}   \]
and the parameters $d=1.2$, $R=2$. The Riemannian manifold $(\BB,g^\tn)$ has then constant, positive Gaussian curvature $K=d^2$. Reconstructions for $p$-norms with
$p_1=1$, $p_2=2$ can be seen in Figure \ref{fig:constant_curvature}. The regularization parameters were $\alpha_1=0.01$ and $\alpha_2=0.4$, respectively. The stopping
indices were $k^*_1=50$ and $k^*_2=20$, respectively. As expected the reconstruction for $p_2$ is more accurate then the sparse reconstruction for $p_1$. We furthermore realize that
at the center the reconstruction deteriorates. This comes from the specific metric $g^\tn$ which generates a \emph{cold spot} in the center, that means a small region where almost
no geodesic curve, i.e. ultrasound signal, intersects. 
This fact is clearly visible when we consider the associated geodesic curves for $\tn$ and $\tn_{k^*_2}$ (Figure \ref{fig:constant_curvature_geodesics}). One sees that only few geodesics pass
the center of the disk.\\

\begin{figure}[H]
\centering
\includegraphics[width=.3\textwidth]{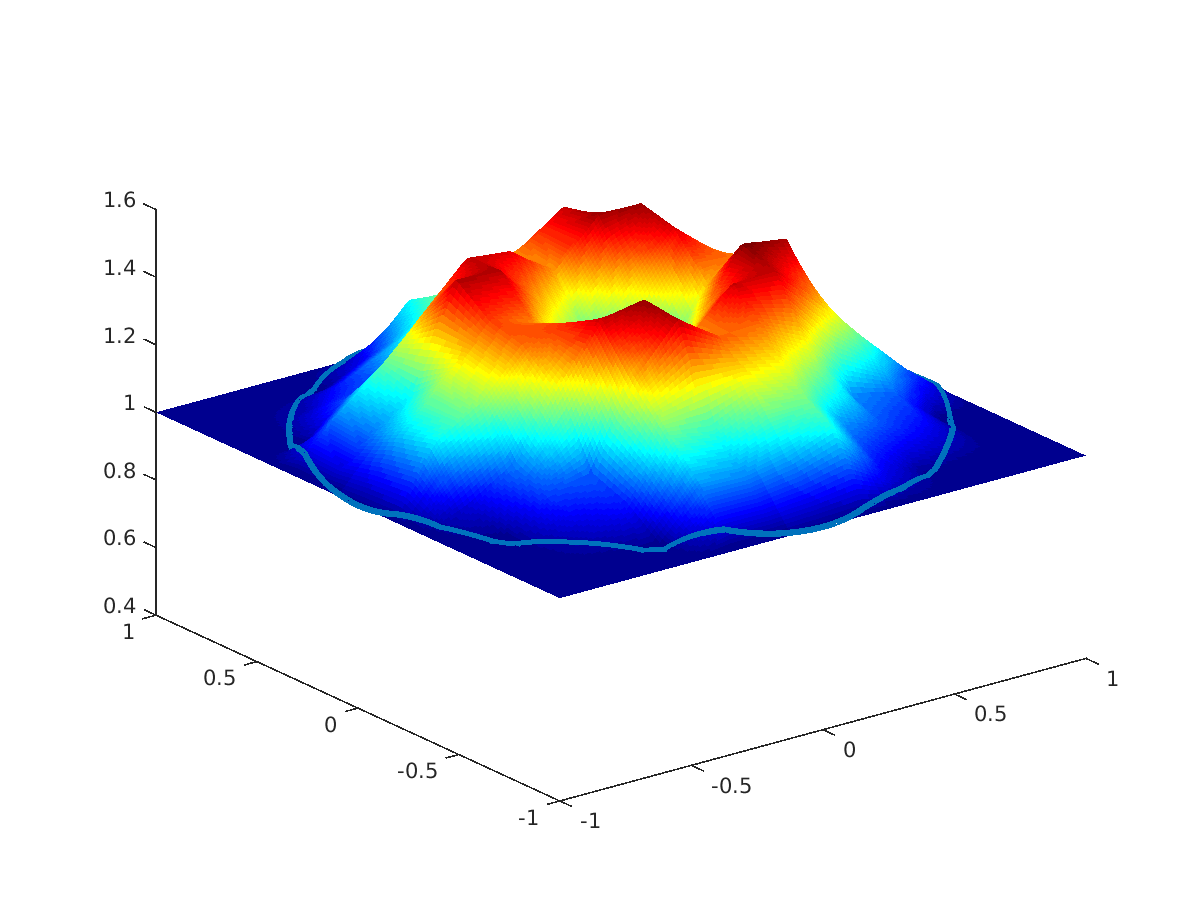}
\includegraphics[width=.3\textwidth]{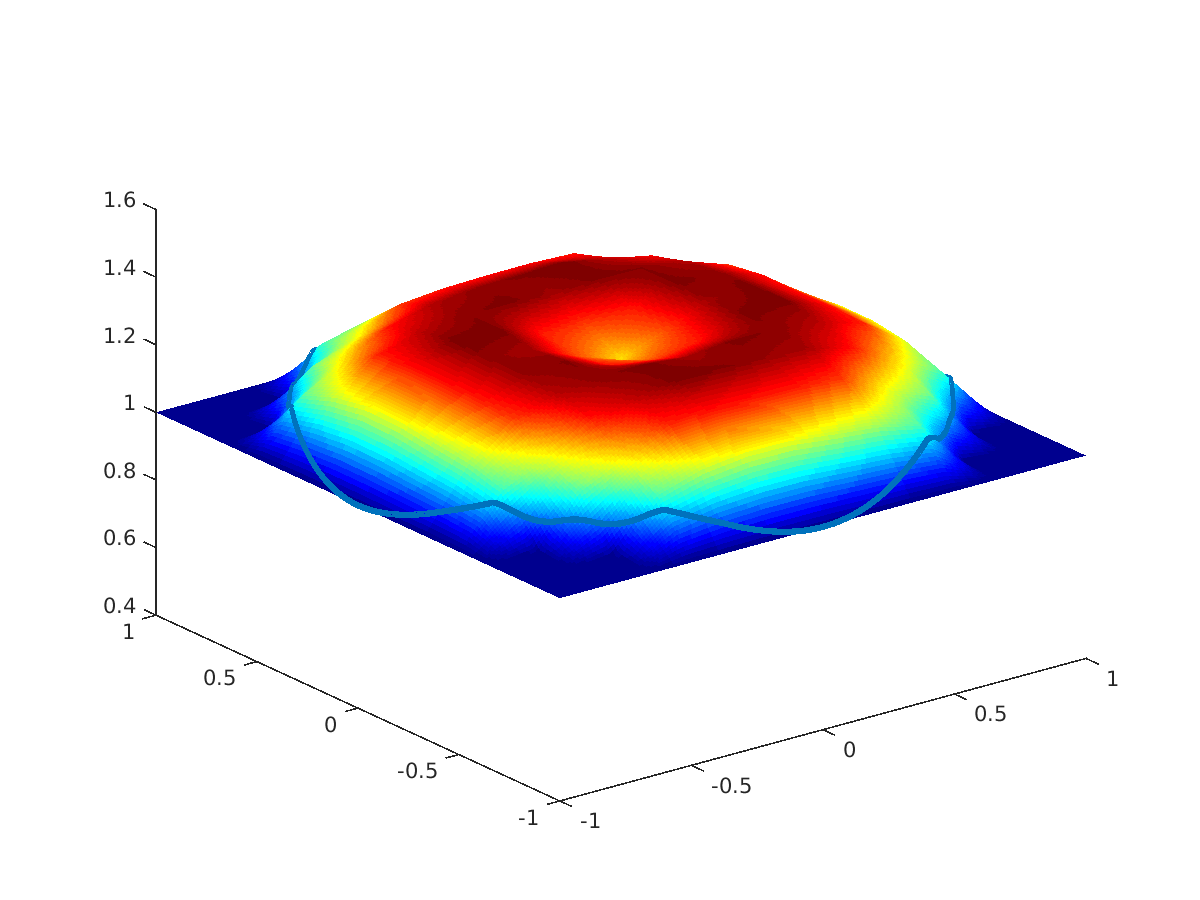}
\includegraphics[width=.3\textwidth]{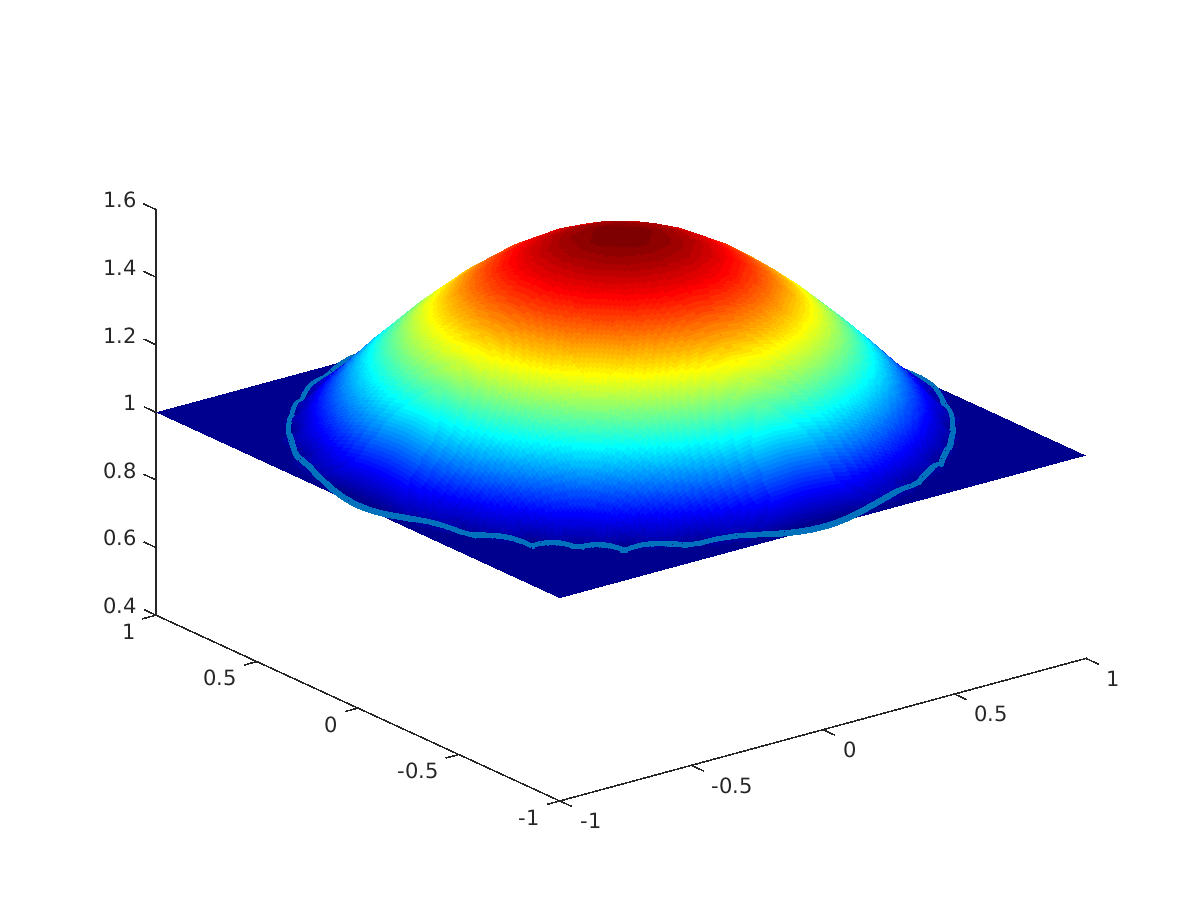}
\caption{Reconstructions of a refrective index with constant, positive curvature. \emph{Left:} $p_1=1$, \emph{Middle:} $p_2=2$, \emph{Right:} exact sound speed.}
 \label{fig:constant_curvature}
\end{figure}

\begin{figure}[H]
\centering
\includegraphics[width=.45\textwidth]{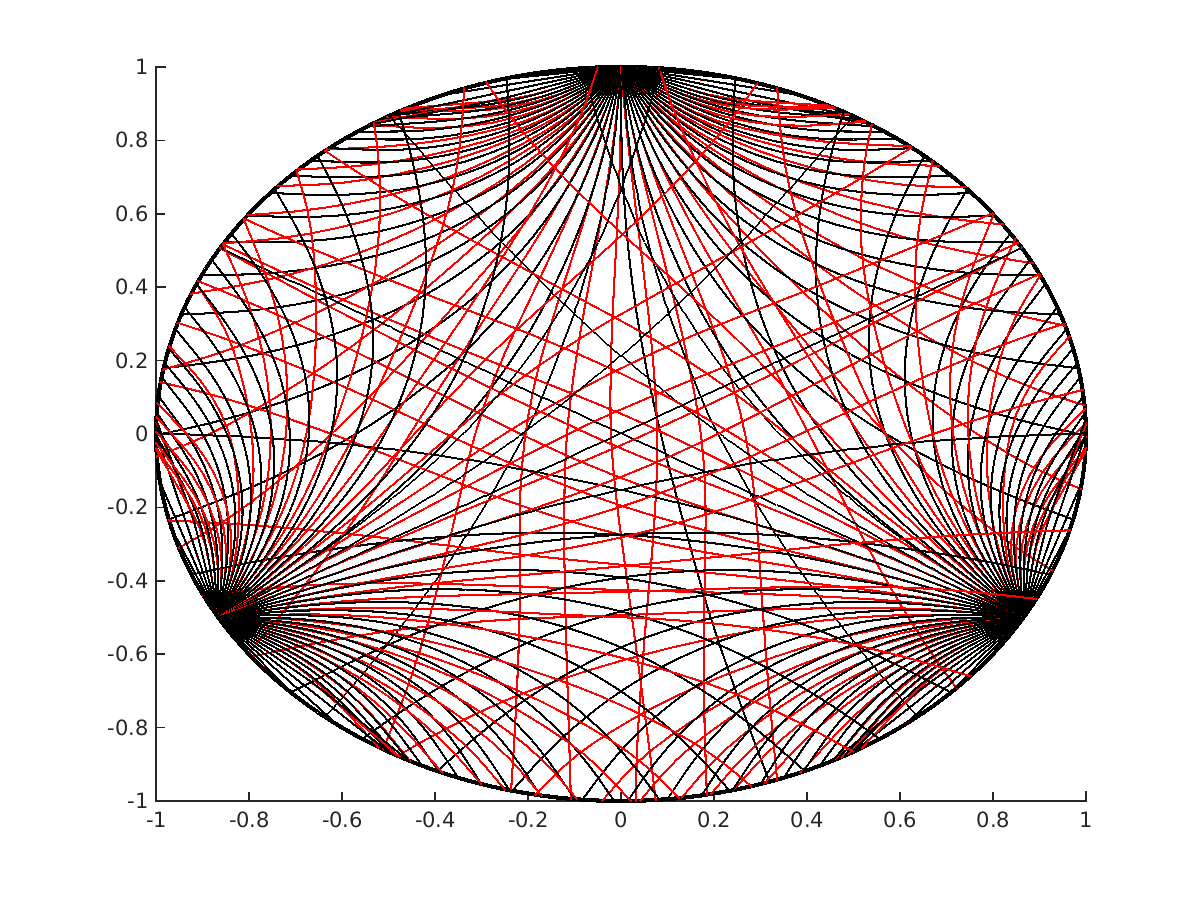} 
\caption{Geodesic curves for $\tn_{k^*_2}$ (red curves) and $\tn$ (black curves). The \emph{cold spot} in the center is clearly visible.}
 \label{fig:constant_curvature_geodesics}
\end{figure}

%%%%%%%%%%%%%%%%%%%%%%%%%%

\subsection{Experiments with noisy data}

At last we show a numerical test using noise contaminated data $u^{meas,\delta}$. We perform this test by means of the exact solution
\[   c(x) = 1/\tn (x) := 1 + \sum_{i=1}^3 \varphi_i (x) \chi_i (x) + \bar{\varphi} (x) \bar{\chi}(x)  \]
with $\varphi_i$, $\chi_i$ as in Subsection \ref{c-peaks} and
\[   \bar{\varphi}(x) = \bar{\vartheta} \cos\left( \pi \frac{|x-\bar{q}|}{\bar{r}} \right)\,,  \]
\[   \bar{\chi} (x) = \left\{ \begin{array}{r@{\,,\quad}l} 1 & \mbox{if } 1-4\bar{r} \leq |x-\bar{q}|\leq 1-2\bar{r}\\ 0 & \mbox{else}\,, \end{array} \right.  \]
and the parameters $\bar{r}=0.1$, $\bar{q} = (0,0)$. The measure data additionally have been contaminated by unifromly distributed noise $\delta$,
\[   u^{meas,\delta} := R (\tn^h) + \delta  \]
with relative error $|\delta | / |R (\tn^h)| = 0.1$ (i.e. $10 \%$ relative noise). Figure \ref{fig:noise} shows reconstructions with exact $u^{meas}=R(\tn)$ as well as with noisy data
$u^{meas,\delta}$. The parameters for the reconstruction with noisy data are $\mu_l=0.02$, $\alpha=1.5$, $p=2$ and $k^*=50$.\\

\begin{figure}[H]
\centering
\includegraphics[width=.3\textwidth]{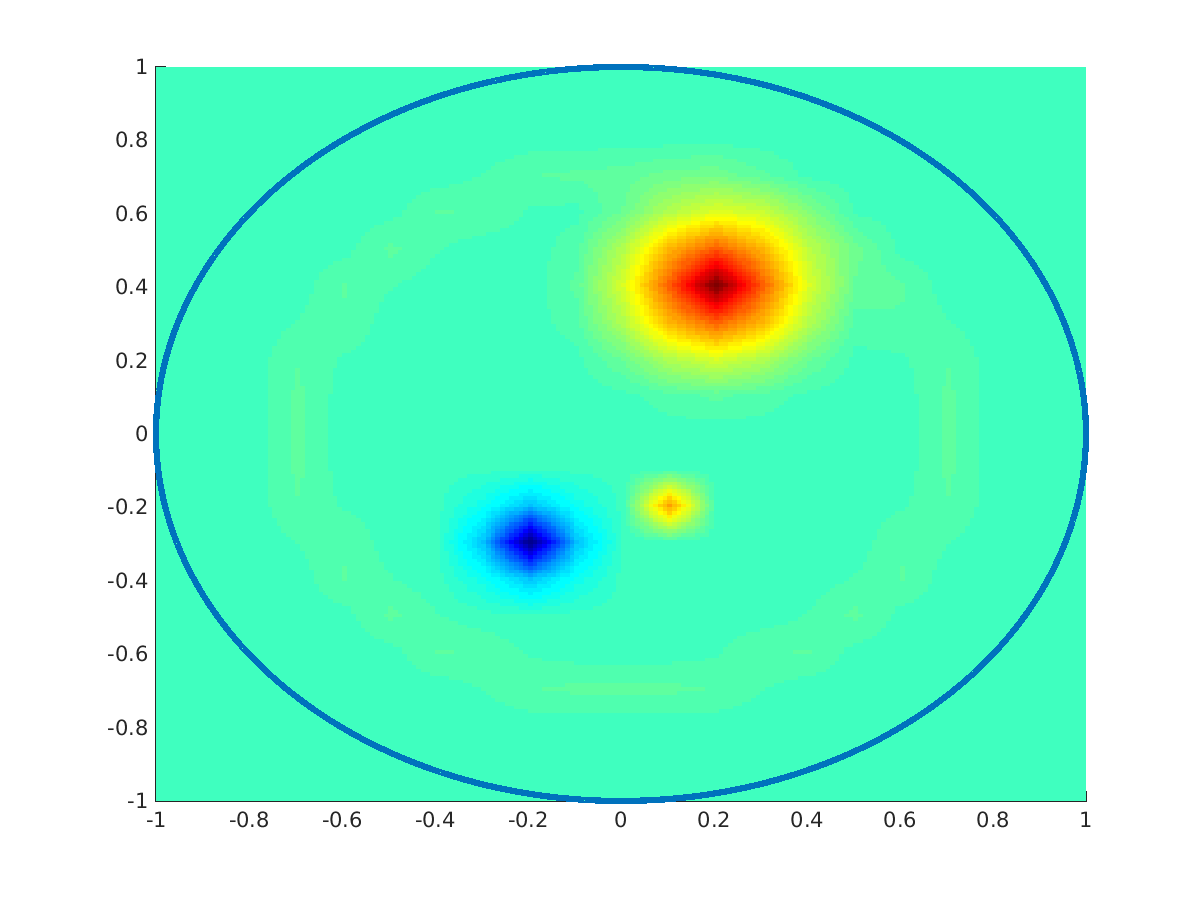}
\includegraphics[width=.3\textwidth]{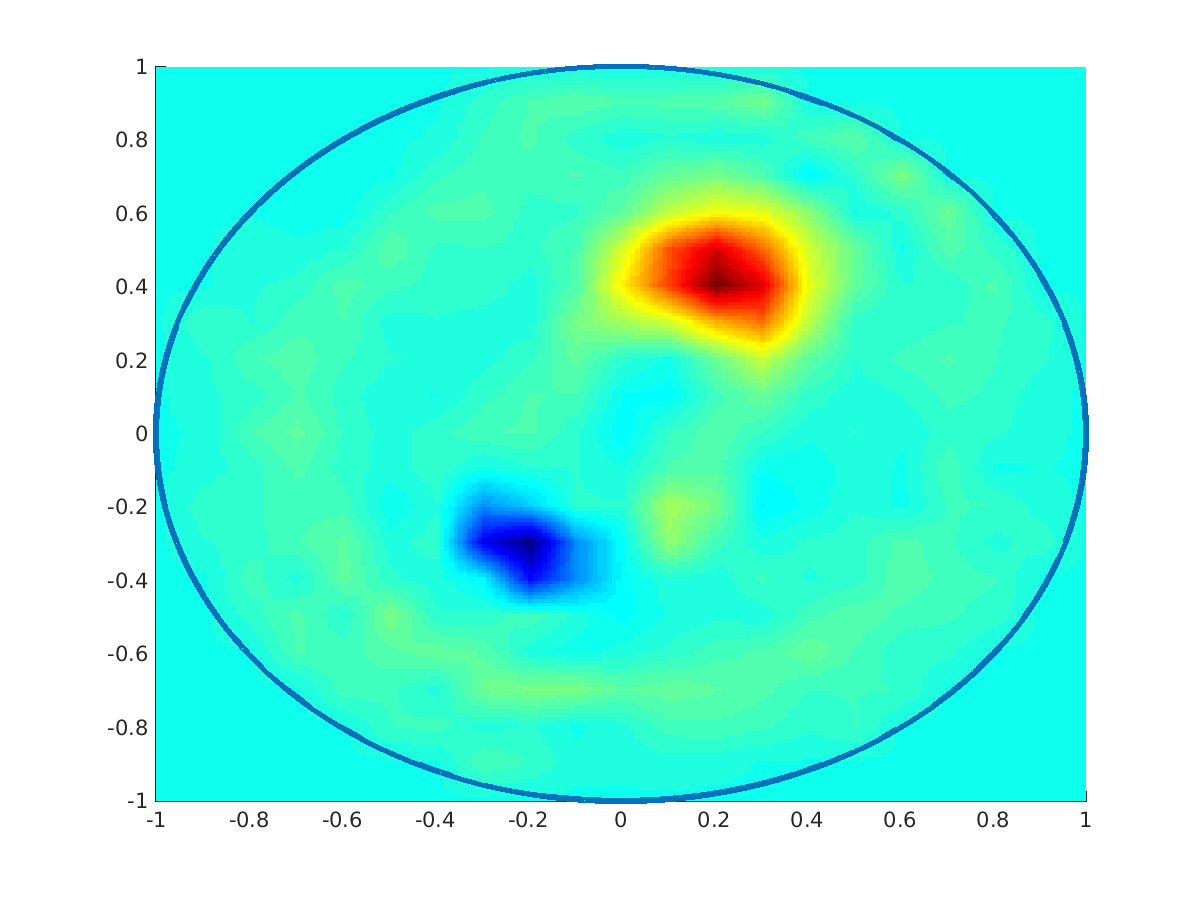}
\includegraphics[width=.3\textwidth]{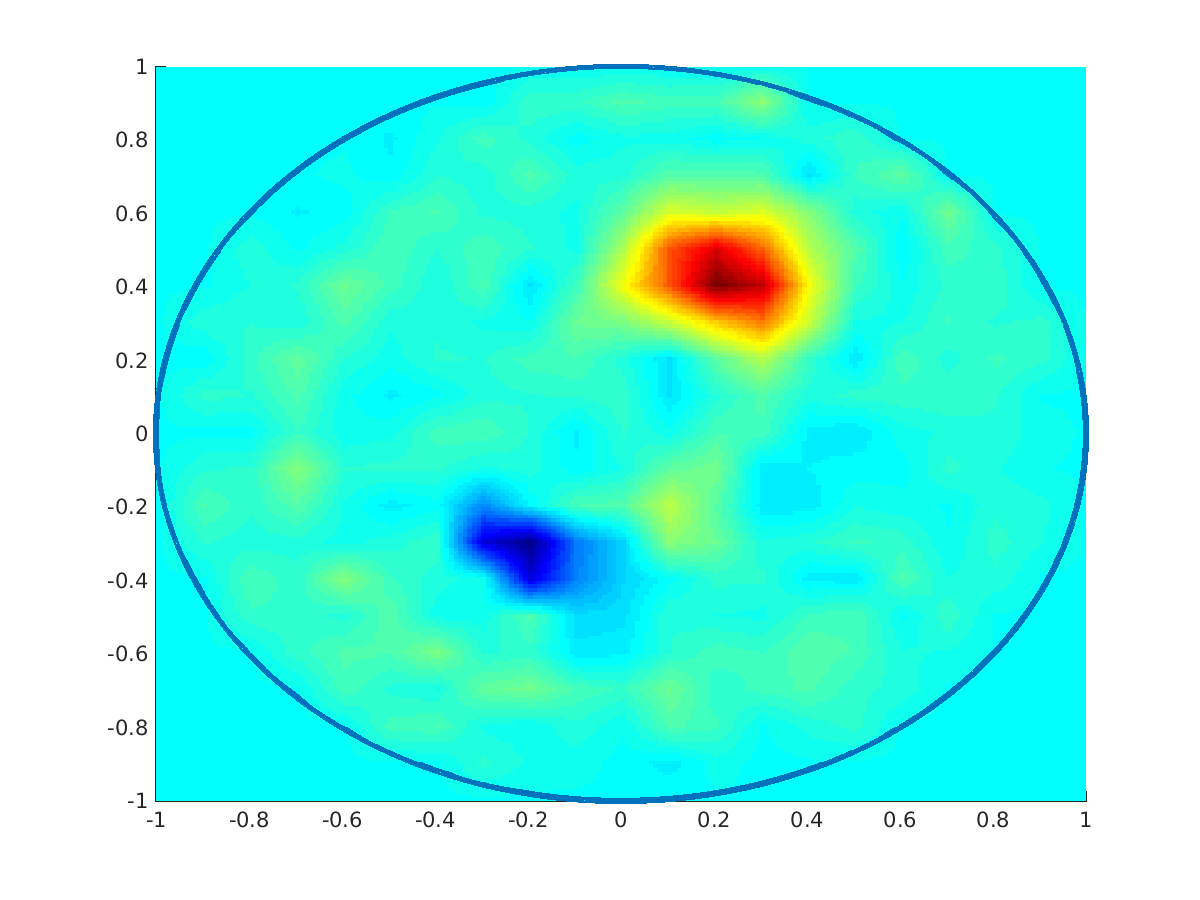}
\caption{Reconstructions of $c(x)$ (left picture) from exact data $u^{meas}$ (middle picture) and noisy data $u^{meas,\delta}$ (right picture).}
 \label{fig:noise}
\end{figure}

%%%%%%%%%%%%%%%%%%%%%%%%%%%%%%%%%%%%%%%%%%%%%%%%%

\section{Conclusions}

%%%%%%%%%%%%%%%%%%%%%%%%%%%%%%%%%%%%%%%%%%%%%%%%%

In the article we propose a numerical solution scheme for the computation of the refrecative index of a medium from boundary time-of-flight measurements in 2D.
The method relies on the minimization of a Tikhonov functional that penalizes aberrations from $\tn=1$.
The minimization is done by a steepest descent method where the linearization of the forward operator was achieved by using the old iteration $\tn_k$ as
refractive index to compute the propagation paths. The ultrasound signals were assumed to propagate along geodesics of the Riemannian metric 
$\mathrm{d} s^2 = \tn^2 (x) |\mathrm{d} x|^2$ which is due to Fermat's principle. We were able to prove that every sequence $\{\tn_k\}$ generated by our
iterative scheme has weak limit points. Of course this is a little unsatisfactory. It would be great if one could prove that these limit points are minimizers
of $J_\alpha$. One way to do this might be to use the concept of surrogate functionals, see e.g. \cite{RAMLAU;TESCHKE:06,lang}. In fact if we define $\tilde{J}_\alpha : X_p\times X_p \to \RR$ by
\[   \tilde{J}_\alpha (x,a) := J_\alpha^a (x)\,,   \]
then obviously our Algorithm \ref{alg:4_IterAdapMin} reads as
\[   \tn_{k+1} = \mathrm{arg} \,\min_{\tn\in L^p (M)} \tilde{J}_\alpha (\tn,\tn_k)  \,,\qquad k=0,1,\ldots . \]
If we assume that $J_\alpha (\tn) \leq \tilde{J}_\alpha (\tn,a)$ at least for all $a$ close to $\tn$, then $\tilde{J}_\alpha$ can be interpreted as a 
(local) surrogate functional. The investigation of convcergence as well as the derivation of the G\^{a}teaux derivative $R' (\tn) a$, $\tn,a\in X_p$
is subject of current research.\\
The numerical experiments show a good performance of the method, also if we have sparse solutions.\\
Another result of the article is the explicit representation of the backprojection operator $R_a^*$ for a non-Euclidean geometry as well as its numerical realization.
We showed the analogy to the conventional (Euclidean) backprojection operator as it is known from 2D computerized tomography.\\
At last we would like to mention that the results of this article do not only affect seismics or phase contrast TOF tomography, but also other tomographic problems
in inhomogeneous media such as vector and tensor field tomography.

%%%%%%%%%%%%%%%%%%%%%%%%%%%%%%%%%%%%%%%%%%%%%%%%%

\section*{Acknowledgments}

We are indebted to the Deutsche Forschungsgemeinschaft (German Science Foundation, DFG) which funded this project under Schu 1978/7-1.

%%%%%%%%%%%%%%%%%%%%%%%%%%%%%%%%%%%%%%%%%%%%%%%%%

\end{document}